\newtheorem{remark}{Remark}
\newtheorem{lemma}{Lemma}
\newtheorem{definition}{Definition}
\newtheorem{corollary}{Corollary}
\newtheorem{proposition}{Proposition}
\newtheorem{theorem}{Theorem}
\crefname{hypothesis}{Hypothesis}{Hypotheses}
\newtheorem{assumption}{Assumption}
\title{Continuous time limit of the stochastic ensemble Kalman inversion: Strong convergence analysis}
\author{Dirk Bl\"omker\thanks{Universit\"at Augsburg, Institut f\"ur Mathematik, 86135 Augsburg, Germany
  ({dirk.bloemker@math.uni-augsburg.de}).}
\and Claudia Schillings\thanks{Universit\"at Mannheim, Institute of Mathematics, D-68131 Mannheim, Germany
  ({c.schillings@uni-mannheim.de}).}
\and Philipp Wacker\thanks{Friedrich-Alexander-Universität Erlangen-Nürnberg, 91058 Erlangen, Germany
  ({phkwacker@gmail.com}).}
\and Simon Weissmann\thanks{Universit\"at Heidelberg, Interdisziplin\"ares Zentrum f\"ur Wissenschaftliches Rechnen, D-69120 Heidelberg, Germany
  ({simon.weissmann@uni-heidelberg.de}).}
  }
\newcommand{\floor}[1]{\left \lfloor #1 \right \rfloor }
\newcommand{\cF}{\mathcal{F}}
\newcommand{\cN}{\mathcal{N}}
\newcommand{\cX}{\mathcal{X}}
\newcommand{\E}{\mathbb{E}}
\newcommand{\N}{\mathbb{N}}
\newcommand{\R}{\mathbb{R}}
\newcommand{\PP}{\mathbb{P}}
\renewcommand{\d}{\mathrm d}
\DeclareMathOperator{\diag}{diag}
\DeclareMathOperator{\HS}{HS}
\newcommand{\trace}{\operatornamewithlimits{Tr}}
\newcommand{\range}{\operatorname{range}}
\begin{document}

\maketitle

\begin{abstract}
  The Ensemble Kalman inversion (EKI) method is a method for the estimation of unknown parameters in the context of (Bayesian) inverse problems. The method approximates the underlying measure by an ensemble of particles and iteratively applies the ensemble Kalman update to evolve (the approximation of the) prior into the posterior measure.
  For the convergence analysis of the EKI it is common practice to derive a continuous version, replacing the iteration with a stochastic differential equation. In this paper we validate this approach by showing that the stochastic EKI iteration converges to paths of the continuous-time stochastic differential equation by considering both the nonlinear and linear setting, and we prove convergence in probability for the former, and convergence in moments for the latter. The methods employed can also be applied to the analysis of more general numerical schemes for stochastic differential equations in general. 
\end{abstract}

Keywords:

 Bayesian inverse problems, ensemble Kalman inversion, optimization, numerical discretization of SDEs, stochastic differential equations, Euler-Maruyama


  65N21, 62F15, 65N75, 65C30, 90C56


\section{Introduction}

Inverse problems have a wide range of application in sciences and engineering. The goal is to recover some unknown quantity of interest, which can only be observed indirectly through perturbed observations. These problems are typically ill-posed, in particular solutions often do not depend on the data in a stable way, and regularization techniques are needed in order to overcome the instability. The Bayesian approach to inverse problems interprets the problem in a statistical framework, i.e. introduces a probabilistic model on the parameters and measurements in order to include the underlying uncertainty. The prior distribution on the unknown parameters reflects the prior knowledge on the parameters and regularizes the problem, such that, under suitable assumptions, well-posedness results of the Bayesian problem can be shown. 
The posterior distribution, the solution to the Bayesian inverse problem, is the conditional distribution of the unknown parameters given the observations. Since the posterior distribution is usually not directly accessible, sampling methods for Bayesian inverse problems have become a very active field of research. 

We will focus here on the Ensemble Kalman filter (EnKF) for inverse problems also known as ensemble Kalman inversion (EKI), which is a very popular method for the estimation of unknown parameters in various fields of application. 
Originally, the EnKF has been introduced by Evensen \cite{evensen1994sequential,Evensen2003} for data assimilation problems and more recently, has been considered to solve inverse problems \cite{StLawIg2013}. The EKI has been analysed in the literature as particle approximation of the posterior distribution as well as a derivative-free optimization method for classical inverse problems. Both the EnKF as well as the EKI method have been analyzed in a continuous time formulation formulated by a coupled system of stochastic differential equations (SDEs). The main focus of this work is to theoretically verify the convergence of the discrete EKI method to its continuous time formulation. 

We will give an introduction to our mathematical setup followed by a brief overview of the existing literature.

\subsection{Mathematical setup}

We are interested in solving the inverse problem of recovering the unknown parameter $u\in\cX$ from noisy data $y\in\R^K$ described through the underlying forward model
\begin{align}\label{eq:IP}
y = G(u)+\eta.
\end{align}
Here $G:\cX\to\R^K$ denotes the possibly nonlinear forward map, mapping from a parameter space $\cX$ to an observation space $\R^K$, and $\eta\sim\cN(0,\Gamma)$ models the noise incorporated in the measurement.  Throughout this document we will assume a finite dimensional parameter space $\cX=\R^p$. Due to the subspace property of the EKI, cp. \cite{StLawIg2013}, the EKI ensemble stays in the affine subspace spanned by the initial ensemble, thus rendering the dynamics finite-dimensional. Determinstic approaches to inverse problems typically consider the minimization of a regularized loss functional of the form
\begin{align*}
\min_{u\in\R^p}\ \mathcal L_{\R^K}(G(u),y) + \mathcal R_{\R^p} (u),
\end{align*}
where $\mathcal L_{\R^K}:\R^{K}\times\R^K\to\R_+$ describes the discrepancy of the mapped parameter and the data, whereas $\mathcal R_{\R^p}:\R^p\to\R_+$ is the regularization function incorporating prior information on the parameter $u\in\R^p$.  Classical choices of regularization include Tikhonov regularization \cite{EKN1989} and total variation regularization \cite{Chambolle2009AnIT,ROF1992}.  For more details on the different types of regularization we refer to \cite{EHN1996,benning_burger_2018}.

In contrast, from a statistical point of view, the Bayesian approach for inverse problems incorporates regularization through prior information of the underlying unknown parameter by introducing a probabilistic model.  The unknown parameter $u$ is modeled as an $\R^p$-valued random variable with prior distribution $\mu_0$ which is stochastically independent of the noise $\eta$. Hence, we can view $(u,y)$ as a jointly distributed random variable on $\R^K\times\R^p$ and solving the  Bayesian inverse problem means to condition on the event of the realized observation $y\in\R^K$.  The solution of the Bayesian inverse problem is then given by the distribution of $u\mid y$ also known as the posterior distribution \begin{equation}\label{eq:posterior}
\mu(\mathrm{d}u) = \frac1Z\exp(-\Phi(u;y))\mu_0(\mathrm{d}u),
\end{equation}
with normalization constant 
\[ 
Z:= \int_{\R^p}\exp(-\Phi(u;y))\mu_0(\mathrm{d}u)
\]
and least-squares functional $\Phi(\cdot;y):\R^p\to\R_+$ defined by
\[
\Phi(u;y) = \frac12 \|y-G(u)\|_\Gamma^2,
\]
where $\|\cdot\|_{\Gamma}:=\|\Gamma^{-1/2}\cdot\|$ and $\|\cdot\|$ denotes the euclidean norm in $\R^K$. We note that for a linear forward map $G(\cdot) = A\ \cdot$, $A\in\mathcal L(\R^p,\R^K)$ and Gaussian prior assumption $\mu_0 = \cN(0,\frac1\lambda C_0)$ the maximum a-posteriori estimate computes as
\[
\min_{u\in\R^p}\ \Phi(u;y) + \frac\lambda2\|u\|_{C_0}^2
\]
which relates the Bayesian approach for inverse problems to the Tikhonov regularization with particular choice
\[
\mathcal L_{\R^K}(G(u),y) 
=  \frac12 \|y-G(u)\|_\Gamma^2
\quad \mbox{and}\quad 
\mathcal R_{\R^p} (u)
= \frac\lambda2\|u\|_{C_0}^2.
\]

\subsection{Ensemble Kalman inversion - The ensemble Kalman filter applied to inverse problems}\label{ssec:EKI}
The EKI method,  as it has been originally introduced in \cite{Iglesias2015}, can be viewed as a sequential Monte Carlo method for sampling from the posterior distribution \eqref{eq:posterior}.  The basic idea is to draw an ensemble of samples from the prior distribution and evolve it iteratively through linear Gaussian update steps in order to approximate the posterior distribution. The linear Gaussian update steps are based on the introduced tempered distribution
\begin{equation}\label{eq:tempering}
\mu_{n+1}(\mathrm{d}u) = \frac1{Z_n} \exp(-h\Phi(u;y))\mu_n(\mathrm{d}u),
\end{equation}
with $h=1/N$ and normalizing constants $Z_n$. Note that $\mu_0$ corresponds to the prior distribution and $\mu_N$ to the posterior distribution. 

To make this idea more concrete, we introduce the initial ensemble $(u_0^{(j)})_{j\in\{1,\dots,J\}}$ of size $J$ as an i.i.d.~sample from the prior $u_0^{(j)}\sim\mu_0$. The particle system in the current iteration is used as empirical approximation of the tempering distribution defined in \eqref{eq:tempering}
\[
\mu_n(\mathrm{d}u)\approx \frac1J\sum_{j=1}^J\delta_{u_n^{(j)}}(\mathrm{d}u). 
\]
Given the current particle system $(u_n^{(j)})_{j\in\{1,\dots,J\}}$ we compute the ensemble Kalman filter update for each particle according to obtain a Gaussian approximation on the distribution $\mu_{n+1}$. We define the following empirical means and covariances
\begin{align*}
\bar u_n &= \frac1J\sum_{j=1}^J u_n^{(j)},\quad \bar G_n = \frac1J\sum_{j=1}^J G(u_n^{(j)})\\
C(u_n) &= \frac1J\sum_{j=1}^J (u_n^{(j)}-\bar u_n)(u_n^{(j)}-\bar u_n)^\top,\\
C^{up}(u_n) &= \frac1J\sum_{j=1}^J (u_n^{(j)}-\bar u_n)(G(u_n^{(j)})-\bar G_n)^\top\\
C^{pp}(u_n) &= \frac1J\sum_{j=1}^J (G(u_n^{(j)})-\bar G_n)(G(u_n^{(j)})-\bar G_n)^\top.
\end{align*}

The ensemble Kalman iteration in discrete time is then given by
\begin{align}\label{eq:discrEKI}
u_{n+1}^{(j)} &=u_n^{(j)}-C^{up}(u_n)(C^{pp}(u_n)+h^{-1}\Gamma)^{-1}(G(u_n^{(j)})-y_{n+1}^{(j)}),\quad j=1,\dots,J.
\end{align}
where $h>0$ is the given artificial step size and $y_{n+1}^{(j)}$ are artificially perturbed observation 
\[
y_{n+1}^{(j)} = y+\xi_{n+1}^{(j)},
\]
where $\xi_{n+1}^{(j)}$ are i.i.d.~samples according to $\mathcal N(0,\frac1h \Gamma)$. Considering the EKI iteration in equation \eqref{eq:discrEKI} we find the two parameters $h>0$, denoting the artificial step size, and $J\ge2$, denoting the number of particles. To analyze the EKI method typically at least one of the limits $h\to 0$ or $J\to\infty$ is applied. While the limit $J\to\infty$ is referred to the mean field limit, the limit $h\to0$ corresponds to the continuous time limit of the EKI.  

Our aim is to give a rigorous verification of the continuous time limit for fixed ensemble size $2\le J<\infty.$ Therefore, we first rewrite the discrete EKI formulation \eqref{eq:discrEKI} as
\begin{align*}
u_{n+1}^{(j)} =u_n^{(j)}&-h\,C^{up}(u_n)(h\,C^{pp}(u_n)+\Gamma)^{-1}(G(u_n^{(j)})-y)\\ &+\sqrt{h}\,C^{up}(u_n)(h\,C^{pp}(u_n)+\Gamma)^{-1}\Gamma^{\frac12}\zeta_{n+1}^{(j)},
\end{align*}
where $\zeta_{n+1}^{(j)}$ are i.i.d.~samples according to $\mathcal N(0,E)$. Taking the limit $h\to 0$ leads to $(h\,C^{pp}(u_n)+\Gamma)^{-1}\to \Gamma^{-1}$ and the continuous time limit of the discrete EKI can formally be written as system of coupled stochastic differential equations (SDEs)
\begin{equation}\label{eq:EKI_SDEs}
\mathrm{d} u_t^{(j)} = C^{up}(u_t)\Gamma^{-1}(y-G(u_t^{(j)}))\,\mathrm{d}t + C^{up}(u_t)\Gamma^{-\frac12}\,\mathrm{d}W_t^{(j)},\quad j=1,\dots,J,
\end{equation}
where $W^{(j)} = (W_t^{(j)})_{t\ge0}$ are independent Brownian motions in $\R^p$. We denote by $\widetilde{\mathcal F}_t = \sigma(W_s^{(j)},s\le t)$ the filtration introduced by the Brownian motions and the particle system resulting from the continuous time limit respectively. Furthermore, we denote by $\mathcal F_n = \sigma(\zeta_k^{(j)},\ j=1,\dots,J,\ k\le n)$ the filtration introduced by the increments of the Brownian motion and the particle system resulting from the discrete EKI formulation respectively.  In particular, for the rest of this article we will consider the filtered probability space $(\Omega,\cF,\widetilde\cF=(\widetilde\cF_t)_{t\in[0,T]},\PP)$ and $(\Omega,\cF,\cF=(\cF_n)_{t\in[0,T]},\PP)$ respectively.

We are going to analyze the discrepancy between the discrete EKI formulation and its continuous time limit. Therefore, we introduce a continuous time interpolation of the discrete scheme denoted as $Y(t)$ and we describe the error by $E(t) = Y(t) - u(t)$.  We will provide convergence in probability of the discrete EKI for general nonlinear forward maps, whereas in the linear setting we will provide strong convergence under suitable assumptions.

\subsection{Literature overview}

As stated above the EnKF has been introduced by Evensen \cite{Evensen2003} as a data assimilation method which approximates the filtering distribution based on particles. This method has been first applied in the context of Bayesian inverse problems in \cite{chen2012ensemble,emerick2013ensemble}, and analysed in the large ensemble size limit under linear and Gaussian assumptions \cite{L2009LargeSA, doi:10.1137/140965363} as well as nonlinear models \cite{doi:10.1137/140984415}. In \cite{LS2021_c} the authors study the mean field limit of the closely related ensemble square root filter (ESRF). The EnKF has been formulated in various multilevel formulations \cite{doi:10.1137/15M100955X,2016arXiv160808558C,HST2020,chada2021multilevel}.  A long time and ergodicity analysis are presented in \cite{0951-7715-27-10-2579,47c5c78ebb8c44ef9d8d5e0d23e23c13,0951-7715-29-2-657},  including uniform bounds in time and the incorporation of covariance inflation.  Under linear and Gaussian assumptions the accuracy of the EnKF for a fixed ensemble size has been studied in \cite{Tong2018,doi:10.1002/cpa.21722} as well as the accuracy of the ensemble Kalman-Bucy filter \cite{delmoral2018,doi:10.1137/17M1119056}. Beside the large ensemble size limit, much work has been investigated in the analysis of the continuous time formulation \cite{bergemann2010localization,bergemann2010mollified,Reich2011}.  Theoretical verification of the continuous time limit of the EnKF \cite{LS2021} and the ESRF \cite{LS2021_b} have been derived. In \cite{LS2021}, uniform boundedness on the forward and observation model is assumed. In \cite{lange2021derivation}, this assumption could be relaxed to general nonlinear functions by working with stopping time arguments controlling the the empirical covariances. The results on the continuous time limits then hold locally in time with bounding constants growing exponentially in time.

The application of the EnKF to inverse problems has been proposed in \cite{StLawIg2013}. It can be viewed as a sequential Monte Carlo type method as well as a derivative-free optimization method.  While in the setting of linear forward maps and Gaussian prior assumption the posterior can be approximated in the mean field limit, for nonlinear forward maps this iteration is not consistent with respect to the posterior distribution \cite{ErnstEtAl2015}.  In \cite{DL2021_b,MHGV2018} the authors analyse the mean field limit based on the connection to the Fokker--Planck equation, whereas in \cite{DLL2020} weights have been incorporated in order to correct the resulting posterior estimate for nonlinear models. Much of the existing theory for EKI is based on the continuous time limit resulting in a system of coupled SDEs which has been formally derived in \cite{SchSt2016} and first analysed in \cite{bloemker2018strongly}.  Furthermore, in \cite{Armbruster2020ASO} a stabilized continuous time formulation has been proposed. The continuous time formulation opens up the perspective as a derivativefree optimization method due to its gradient flow structure \cite{SchSt2016,Nikola}.  In the literature two variants are typically considered: the deterministic formulation which basically ignores the diffusion of the underlying SDE and the stochastic formulation including the perturbed observations. In \cite{DBCSPWSW2018} the authors extend the results from \cite{SchSt2016} by showing well-posedness of the stochastic formulation and deriving first convergence results for linear forward models.  The EKI for nonlinear forward models has been studied in \cite{Chada2019ConvergenceAO} in discrete time with nonconstant step size.  In \cite{bungert2021} the dynamical system resulting from the continuous time limit of the EKI has been described and analysed by a spectral decomposition. In the viewpoint of EKI as optimization method it naturally turns out that one has to handle noise in the data.  In \cite{doi:10.1080/00036811.2017.1386784} the authors propose an early stopping criterion based on the Morozov discrepancy and in \cite{Iglesias2015,2016InvPr..32b5002I} discrete regularization has been considered. Most recently, in \cite{CST2020} the authors include Tikhonov regularization within EKI.  Furthermore,  adaptive regularization methods within EKI have been studied in \cite{parzer2021convergence,iglesias2020adaptive}.

In comparison to the EKI method studied in the following, a modified ensemble Kalman sampling method has been introduced in \cite{AGFHWLAS2019} and further analysed in  \cite{GNR2020,DL2021,RW2021}. The basic idea is to shift the noise in the observation to the particle itself and make use of the ergodicity of the resulting SDE related to the Langevin dynamic in order to build a sampling method.

\subsection{Outline of the paper}
The contribution of our document is a rigorous theoretical verification of the continuous time limit of the EKI. We provide two very general results, which can then be applied to the EKI. In particular, we formulate the strong convergence result in a way such that it applies to various variants of the EKI by verifying the existence of moments up to a certain order. We make the following contributions:
\begin{itemize}
\item We present approximation results for a general class of SDEs. Based on localization we are able to bound the error of the discretization up to a stopping time.  Removing the stopping time leads to our two main results: 
\begin{enumerate}
\item convergence in probability with given rate function.
\item convergence in $L^\theta$ with given rate function.
\end{enumerate}
\item We apply the general approximation results to the EKI method in a general nonlinear setting, where we can verify convergence in probability under very weak assumptions on the underlying forward model.
\item In the linear setting we are able to prove strong convergence in $L^\theta$ of the discrete EKI method.  While for general linear forward maps we obtain $L^\theta$ convergence for $\theta\in(0,1)$, we provide various modifications of the scheme in order to ensure $L^\theta$ convergence for $\theta\in(0,2)$.
\end{itemize}

 With this manuscript we resolve the question posed in \cite{bloemker2018strongly}: It is indeed the case that the specific form of the discrete EKI iteration (in particular the additional term $(hC^{pp}(u_n)+\Gamma)^{-1}$ vanishing in the continuous-time limit $h\to 0$) can be thought of as a time-discretization for the SDE \eqref{eq:EKI_SDEs} specifically enforcing strong convergence, which cannot be said for a simple Euler-Maruyama type iteration of form
\begin{align*}
u_{n+1}^{(j)} =u_n^{(j)}&-h\,C^{up}(u_n)(G(u_n^{(j)})-y)\\ &+\sqrt{h}\,C^{up}(u_n)\Gamma^{\frac12}\zeta_{n+1}^{(j)}.
\end{align*}
Indeed, numerical simulations (not presented in this manuscript, but easily implemented) show that the Euler-Maruyama discretization does not exhibit strong convergence (as already demonstrated for a similar SDE in \cite{HJK2011}) due to rare events resulting in exploding iteration paths. There are connections to taming schemes (which have a similar effect of cutting off exploding iterations), as in \cite{hutzenthaler2015numerical}, although the specific form of EKI is not a taming scheme in the narrow sense.

The remainder of this article is structured as follows. In Section~\ref{sec:general_approx} we present our general numerical approximation results for SDEs which are then applied to the solution of general nonlinear inverse problems with the EKI method in Section~\ref{sec:general_EKI}. The application to linear inverse problems is presented in Section~\ref{sec:linear_EKI}.  We close the main part of the document with a brief conclusion in Section~\ref{sec:conclusion} discussing possible further directions to go.
Most of our proofs are shifted to the appendix in order to keep the focus on the key contribution presented in this document.


\section{General approximation results for SDEs}\label{sec:general_approx}

In this section we discuss a general approximation result for SDEs, which is then applied to the ensemble Kalman Inversion. We consider local solutions (i.e., up to a stopping time) of the following general SDE in $\mathbb{R}^n$ in integral notation
\begin{equation}
x(t) = x_0 + \int_0^t f(x(s))dt + \int_0^t g(x(s)) dW(s) , 
\end{equation}
and for $h>0$ we consider the numerical approximations
\begin{equation}\label{eq:cont_interpolation}
Y(t) = x_0+ \int_0^t f_h (Y(\floor{s}))ds + \int_0^t g_h(Y(\floor{s})) dW (s)
\end{equation}
where we round down to the grid 
\[
\floor{s} = \max\{ kh \leq s \ :\ k\in\mathbb{N} \}.
\]
Note that we suppress the index $h$ in the notation. One can check that $Y$ is a continuous time interpolation of the following discrete scheme
\[
Y_{n+1} = Y_n+ h f_h (Y_n) + g_h(Y_n)[W (h(n+1))-W(nh)], \qquad Y_0=x(0).
\]

We assume that both the discrete and the continuous scheme start at the same initial value, i.e.~$x(0)=Y(0)=x_0$. Moreover, for every fixed $h>0$ the discrete scheme exists for all times and cannot blow up in finite time. For the nonlinearities we assume that the limiting drift terms $f$   and the limiting diffusion matrix $g$ are locally Lipschitz and that the nonlinearities $f_h$ and $g_h$ have a uniform bound in $h$ on the growth and approximate $f$ and $g$. To be more precise we formulate the following assumption.

\begin{assumption}
\label{ass:general}
Assume that the functions $f,f_h:\mathbb{R}^p\to \mathbb{R}^p$ and $g,g_h:\mathbb{R}^p\to \mathbb{R}^{p\times m}$,
$h\in(0,1)$ 
are locally Lipschitz such that 
for all radii $R>0$ there exist constants $C_a$, $L$ and $B$ such that for
all $u, v\in\mathbb{R}^n$ with norm less than $R$ the following properties hold:
\begin{enumerate}
\item uniform approximation on compact sets
\[
\|f_h(u)-f(u)\| \leq C_a(R,h) , \qquad \|g_h(u)-g(u)\|_{\HS} \leq C_a(R,h) 
\]
with $C_a(R,h)\to 0$ for $h\to 0$
\item local Lipschitz continuity
\[
\|f(u)-f(v)\| \leq L(R)\|u-v\| , \qquad \|g(u)-g(v)\|_{\HS} \leq L(R)\|u-v\|
\]
\item growth condition
\[
\|f_h(u)\| \leq B(R) , \qquad \|g_h(u) \|_{\HS} \leq B(R).
\]
\end{enumerate}
Moreover, we can assume without loss of generality 
that all $R$-dependent constants are non-decreasing in $R$.
\end{assumption}

\begin{remark}
Note that assumption 3 just means local boundedness, but we will use the specific growth factor $B(R)$ in the proofs later and have to compute the dependence of $B$ on $R$.
To be more precise, we will fix $R$ depending on $h$ such that various terms depending on $h$, $B(R)$, $L(R)$, and $C_a(R,h)$ are small. See for example \eqref{e:condK}.
\end{remark}

Here and in the following we used $\|\cdot\|$ and $ \langle \cdot,\cdot\rangle$ 
for the norm and the standard inner-product in $\mathbb{R}^p$,
while $\| \cdot \|_{\HS}$ is the standard Hilbert-Schmidt norm on matrices in $\mathbb{R}^{p\times p}$
which appears in the Ito formula.

Using the Lipschitz-property, 
we immediately obtain the following statement regarding the one-sided Lipschitz property;
\begin{lemma}
Under Assumption \ref{ass:general} we have for one small $\epsilon>0$ that  
\[
2\langle f(u)- f(y), u-v \rangle + c \|g(u)-g(v)\|_{\HS}^2 \leq (\delta(R)-\epsilon) \|u-v\|^2 
\]
for all $u, v\in\mathbb{R}^n$ with norm less than $R$ with
\begin{equation}\label{eq:def_delta}
\delta(R) :=  2L(R) + c L(R)^2+\epsilon.
\end{equation}
\end{lemma}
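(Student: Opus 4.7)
The plan is essentially a direct application of Cauchy--Schwarz together with the local Lipschitz bounds from Assumption~\ref{ass:general}. The claim is symmetric in $u,v$ (with $\|u\|,\|v\|<R$), and the target upper bound $(\delta(R)-\epsilon)\|u-v\|^2$ simplifies, by the definition \eqref{eq:def_delta} of $\delta(R)$, to $(2L(R)+cL(R)^2)\|u-v\|^2$. So the $\epsilon$ on the right-hand side is purely bookkeeping built into $\delta(R)$ so that a strict margin of $\epsilon$ survives for later applications; it does not actually need to be produced in the estimate itself.

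First I would handle the drift term. By Cauchy--Schwarz in $\R^p$ and the local Lipschitz property of $f$ (item 2 of Assumption~\ref{ass:general}),
\[
2\langle f(u)-f(v),\,u-v\rangle \;\le\; 2\|f(u)-f(v)\|\,\|u-v\| \;\le\; 2L(R)\,\|u-v\|^2.
\]
Next I would handle the diffusion term by squaring the Hilbert--Schmidt Lipschitz bound on $g$,
\[
c\,\|g(u)-g(v)\|_{\HS}^2 \;\le\; c\,L(R)^2\,\|u-v\|^2.
\]

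Summing these two estimates gives
\[
2\langle f(u)-f(v),\,u-v\rangle + c\,\|g(u)-g(v)\|_{\HS}^2 \;\le\; \bigl(2L(R)+cL(R)^2\bigr)\|u-v\|^2,
\]
which is exactly $(\delta(R)-\epsilon)\|u-v\|^2$ by \eqref{eq:def_delta}. There is essentially no obstacle here; the only mild subtlety is recognising that the $+\epsilon$ in the definition of $\delta(R)$ is inserted precisely so that the stated inequality has a genuine slack of $\epsilon$ on the right-hand side, which will be exploited later (for instance in Gronwall-type arguments used to absorb error contributions up to a stopping time). Any sufficiently small fixed $\epsilon>0$ works, and its role in this lemma is only to define $\delta(R)$, not to appear in the estimation itself.
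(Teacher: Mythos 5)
Your proof is correct and is exactly the argument the paper has in mind: the paper states this lemma as an immediate consequence of the local Lipschitz property and gives no separate proof, and your Cauchy--Schwarz estimate on the drift term plus squaring the Hilbert--Schmidt Lipschitz bound on $g$ is precisely that immediate derivation. Your reading of the $+\epsilon$ in the definition of $\delta(R)$ as bookkeeping to leave a strict margin is also the right one.
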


This estimate with $c=1+\epsilon$ is needed if we want to bound second moments of the error.
The higher the moment we want to bound, the higher $c$ has to be.

\begin{remark}
The previous lemma provides a weak one-sided Lipschitz property which is 
enough to prove convergence or the error.
Nevertheless, we remark without proof that all the error terms are much smaller, 
if $\delta(R)$ is negative or at least bounded uniformly in $R$. 
We are even able to obtain rates of convergence in that case.

The drawback is that we will need arbitrarily  high moments of a stopped error, which leads to quite technical estimates.
Moreover, determining an optimal $\delta(R)$ is quite delicate in our application we have in mind.
So we postpone these questions to further research.
\end{remark}

Convergence of the Euler-Maruyama scheme for SDEs was postulated under condition of finite exponential moment bounds of the discretization in \cite{HMS2002}, but this condition was soon after proven to be too restrictive: Divergence of the vanilla Euler-Maruyama scheme for non-Lipschitz continuous coefficients was demonstrated in \cite{HJK2011}, due to an exponentially rare (in $h$) family of events with biexponentially bad behavior, which is why standard textbooks about numerical approximations of SDEs \cite{kloeden1992stochastic,lord2014introduction,mao2007stochastic} generally assume globally Lipschitz-continuous coefficients. This led to the development of ``taming schemes'' in \cite{HJK2012,hutzenthaler2015numerical} which are able to cut off the rare tail events leading to exploding moment bounds. The idea is to replace the Euler-Maruyama iteration for an SDE of form $dx = \mu(x)dt + \sigma(x)dW$ of type 
\[ x_{n+1} = x_n + h\cdot \mu(x_n) + \sigma(x_n)\Delta W_n\]
by something of the form
\[x_{n+1} = x_n + \frac{h\cdot \mu(x_n) + \sigma(x_n)\Delta W_n}{1 + |h\cdot \mu(x_n) + \sigma(x_n)\Delta W_n|}.\]
The denominator is close to $1$ for small (well-behaving) increments, and bounds large deviations (which have very small probability anyway) as to avoid exploding paths. Our method of using stopping times to bound (stopped) moments and then remove the stopping times is based on ideas in \cite{HMS2002}.

\subsection{Residual}

We want to bound the error 
\begin{equation}
\label{e:error}
E(t) = x(t)-Y(t)
\end{equation}
solving 
\begin{equation}
dE = [f(x)-f(x+E)]dt +  [g(x)-g(x+E)]dW + d\mathrm{Res}.
\end{equation}
where we define the residual $\mathrm{Res}$, which is an $\mathbb{R}^p$-valued process solving
\begin{equation}
\label{e:Res}
d\mathrm{Res}(t) = [- f_h (Y(\floor{t})) + f(Y(t))] dt +  [-g_h(Y(\floor{t}))+ g(Y(t))] dW.
\end{equation}

Note that the scheme is set up in such a way that $E(0)=0$. Our strategy
of proof is to first bound the error assuming that $E$, $x$ and $Y$ are not too large.
Later we will show that this is true with high probability.

\begin{definition}[cut-off]\label{def:stopping_time}
 For a fixed time $T>0$ and sufficiently large radius $R$ (which will depend on $h$ later) we define the stopping time 
\[
\tau_{R,h} = T \wedge \inf\{t>0: \|x(t)\| > R-1,\ \mathrm{or}\ \|E(t)\|>1 \}.
\]
\end{definition}
Obviously, we have 
\[
\sup_{[0,\tau_{R,h}]}\|x(t)\| \leq R \quad\mbox{and}\quad \sup_{[0,\tau_{R,h}]}\|Y(t)\| \leq R.
\]
Moreover, $\tau_{R,h}>0$ a.s. if $\|x(0)\|<R-1$, as both $x$ and $E$ are  stochastic processes with continuous paths.  We first bound the residual in (\ref{e:Res}):

\begin{lemma}
\label{lem:Res}
For $t\in[0,\tau_{R,h}]$ one has  
\[ d \mathrm{Res}(t) =  \mathrm{Res}_1 (t) dt + \mathrm{Res}_2 (t) dW
\]
with 
\[
\mathbb{E} \sup_{t\in[0,\tau_{R,h}]} \|\mathrm{Res}_1 (t)\|^p \leq C_p  K(R,h)^p
\]
and 
\[
\mathbb{E} \sup_{t\in[0,\tau_{R,h}]} \|\mathrm{Res}_2 (t)\|_{\HS}^p \leq C_p K(R,h)^p
\]
with a constant $C_p>0$ depending only on $p$ and 
\begin{equation}
 \label{e:defK}
 K(R,h):=C_a(R,h)+L(R)h^{1/2} B(R)\;.
\end{equation}
\end{lemma}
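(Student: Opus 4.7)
The plan is to split each residual term into an approximation part and a within-cell increment. Writing
$$\mathrm{Res}_1(t) = \bigl[f(Y(t)) - f(Y(\floor{t}))\bigr] + \bigl[f(Y(\floor{t})) - f_h(Y(\floor{t}))\bigr],$$
and analogously for $\mathrm{Res}_2$ in Hilbert--Schmidt norm, the stopping-time definition of $\tau_{R,h}$ guarantees $\|Y(t)\|,\|Y(\floor{t})\|\le R$ throughout $[0,\tau_{R,h}]$. Then Assumption~\ref{ass:general}(1) bounds the second bracket pointwise by $C_a(R,h)$, while Assumption~\ref{ass:general}(2) bounds the first by $L(R)\|Y(t)-Y(\floor{t})\|$. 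So the entire task reduces to controlling $\sup_{t\le\tau_{R,h}}\|Y(t)-Y(\floor{t})\|$ in $L^p$.

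Since $Y(\floor{s}) = Y(\floor{t})$ is constant for $s\in[\floor{t},t]$, the defining equation \eqref{eq:cont_interpolation} collapses to the one-step Euler--Maruyama-type increment
$$Y(t)-Y(\floor{t}) = (t-\floor{t})\,f_h(Y(\floor{t})) + g_h(Y(\floor{t}))\bigl[W(t)-W(\floor{t})\bigr],$$
and the growth bound in Assumption~\ref{ass:general}(3) yields the pathwise estimate $\|Y(t)-Y(\floor{t})\|\le hB(R) + B(R)|W(t)-W(\floor{t})|$. Using $h\le h^{1/2}$ for $h\in(0,1)$ and combining with the previous step, we get
$$\sup_{t\le\tau_{R,h}}\|\mathrm{Res}_1(t)\| \le C_a(R,h) + L(R)B(R)h^{1/2} + L(R)B(R)\,M_h,$$
where $M_h := \sup_{t\le T}|W(t)-W(\floor{t})|$ is the Brownian modulus of continuity on scale $h$.

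The main step is then to show $\mathbb{E} M_h^p \le C_p h^{p/2}$. The plan is to partition $[0,T]$ into its $\lceil T/h\rceil$ cells, apply Doob's maximal inequality on each to obtain $\mathbb{E}\sup_{t\in[kh,(k+1)h]}|W(t)-W(kh)|^p \le C_p h^{p/2}$, and then take the max over cells using the Gaussian tail of the single-cell supremum (equivalently, the classical L\'evy modulus of continuity for Brownian motion). A naive union bound across the $T/h$ cells would only give $h^{p/2-1}$, so the Gaussian-tail step is essential; any slowly-growing $\log(1/h)^{p/2}$ factor gets absorbed into the constant (which may be taken $T$-dependent since $T$ is fixed in Definition~\ref{def:stopping_time}). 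Plugging back in, all three contributions to $\sup_{t\le\tau_{R,h}}\|\mathrm{Res}_1(t)\|$ are bounded by a constant multiple of $K(R,h) = C_a(R,h)+L(R)h^{1/2}B(R)$, as claimed. The estimate for $\mathrm{Res}_2$ is obtained by the identical argument with Hilbert--Schmidt norms throughout, noting that the modulus of the Brownian increment is the same object and that Assumption~\ref{ass:general} provides matching bounds for $g$ and $g_h$.
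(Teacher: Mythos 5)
Your decomposition of $\mathrm{Res}_1$ into the approximation error $f(Y(\floor{t}))-f_h(Y(\floor{t}))$ and the within-cell Lipschitz term, together with the reduction to the one-step increment $Y(t)-Y(\floor{t}) = (t-\floor{t})f_h(Y(\floor{t}))+g_h(Y(\floor{t}))[W(t)-W(\floor{t})]$ and the pathwise bound $hB(R)+B(R)\|W(t)-W(\floor{t})\|$, is exactly the paper's argument. Where you go beyond the paper is in trying to justify the uniform-in-$t$ moment bound on the Brownian increments, and that is precisely where your argument breaks: the claim $\mathbb{E}\,M_h^p\le C_p h^{p/2}$ with $M_h=\sup_{t\le T}|W(t)-W(\floor{t})|$ is false. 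The maximum over the $\sim T/h$ independent cells of $\sup_{s\in[0,h]}|W(kh+s)-W(kh)|\overset{d}{=}\sqrt{h}\sup_{[0,1]}|B|$ has expectation bounded \emph{below} by $c\sqrt{h\log(T/h)}$, so the $\log(1/h)^{p/2}$ factor you obtain from the Gaussian-tail argument is sharp and cannot be ``absorbed into the constant'': $C_p$ must be independent of $h$, and $\log(1/h)\to\infty$ as $h\to 0$; taking the constant $T$-dependent does not help.

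To be fair, the paper's own proof hides exactly the same issue behind a one-line appeal to Burkholder--Davis--Gundy (which, applied per cell and combined with a union bound, only yields $h^{p/2-1}$; with Gaussian tails, $\left(h\log(1/h)\right)^{p/2}$). You correctly isolated the difficulty that the paper glosses over; you just resolved it incorrectly. The gap is benign for everything downstream: replacing $h^{1/2}$ by $h^{1/2}\sqrt{\log(1/h)}$ (or by $h^{1/2-\epsilon}$) in the definition of $K(R,h)$ in \eqref{e:defK} leaves every subsequent argument intact, since all that is ever used is that $K(R(h),h)\to 0$ for a suitably chosen $R(h)$. Moreover, where the lemma is used inside a time integral (as in Lemma~\ref{lem:emom}) one only needs the pointwise-in-$s$ bound $\mathbb{E}\|\mathrm{Res}_i(s)\|^p\le C_pK(R,h)^p$, which your single-cell estimate does deliver without any logarithmic loss.
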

As the residual needs to be small in order to prove an approximation result,
in the applications we will need to choose a radius $R=R(h)$, with $R(h)\to \infty$ for $h\to 0$, such that 
\begin{equation}
\label{e:condK}
 K(R(h),h)\to 0 \quad \mbox{for }h\to 0.
\end{equation}

\begin{proof}
For the proof see Appendix~\ref{app:general_approx}.
\end{proof}
%
%
%

\subsection{Moment bound of the Error}

For the error we first prove the following result.
\begin{lemma}
\label{lem:emom}
We have for $K$ from \eqref{e:defK}
\[
\sup_{t\ge0} \mathbb{E}\|E(t \wedge\tau_{R,h})\|^2 \leq 
\left\{
\begin{array}{ccl}
 C K(R,h)^2\cdot \int_0^t e^{ \delta(R) s } ds &&\mbox{ for } \delta(R)>0,\\
C K(R,h)^2 &&\mbox{ for } \delta(R)\leq0.
\end{array}
\right.
\]
\end{lemma}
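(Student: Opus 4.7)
The plan is to apply Itô's formula to $\|E(t)\|^2$, use the one-sided Lipschitz property from the earlier lemma to absorb the "$f$-difference" and most of the "$g$-difference" terms into a $\delta(R)\|E\|^2$ contribution, and then handle the residual terms using Lemma~\ref{lem:Res}. Explicitly, since $Y = x - E$ and on $[0,\tau_{R,h}]$ we have $\|x\|,\|Y\|\le R$, Itô applied to $\|E\|^2$ yields
\begin{align*}
d\|E(t)\|^2 &= 2\langle E, f(x)-f(Y)\rangle\,dt + 2\langle E, \mathrm{Res}_1\rangle\,dt \\
&\quad + \|g(x)-g(Y) + \mathrm{Res}_2\|_{\HS}^2\,dt + dM(t),
\end{align*}
where $M$ is a local martingale coming from the stochastic integrals.

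Next I would estimate the deterministic drift. Expanding the quadratic variation and using Young's inequality with parameter $\epsilon>0$ (the same $\epsilon$ as in the one-sided Lipschitz lemma) gives
\[
\|g(x)-g(Y)+\mathrm{Res}_2\|_{\HS}^2 \le (1+\epsilon)\|g(x)-g(Y)\|_{\HS}^2 + (1+\epsilon^{-1})\|\mathrm{Res}_2\|_{\HS}^2,
\]
and $2\langle E,\mathrm{Res}_1\rangle \le \epsilon\|E\|^2 + \epsilon^{-1}\|\mathrm{Res}_1\|^2$. Choosing $c=1+\epsilon$ in the one-sided Lipschitz lemma, the drift bound becomes
\[
2\langle E, f(x)-f(Y)\rangle + (1+\epsilon)\|g(x)-g(Y)\|_{\HS}^2 \le (\delta(R)-\epsilon)\|E\|^2,
\]
so overall the drift in $d\|E\|^2$ is bounded by $\delta(R)\|E\|^2 + C_\epsilon(\|\mathrm{Res}_1\|^2 + \|\mathrm{Res}_2\|_{\HS}^2)$ pointwise on $[0,\tau_{R,h}]$.

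Now I would stop at $t\wedge\tau_{R,h}$, take expectations (the local martingale term vanishes once localized further if needed, using that the integrand is bounded on the stopped interval), and use Lemma~\ref{lem:Res} to control $\mathbb{E}\|\mathrm{Res}_i\|^2 \le C\, K(R,h)^2$. Writing $\varphi(t) := \mathbb{E}\|E(t\wedge\tau_{R,h})\|^2$, one obtains the integral inequality
\[
\varphi(t) \le \delta(R)\int_0^t \varphi(s)\,ds + C\, K(R,h)^2 \cdot t.
\]
Gronwall's inequality then gives $\varphi(t) \le C\, K(R,h)^2 \int_0^t e^{\delta(R)(t-s)}ds$. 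If $\delta(R)>0$, this is bounded by $CK(R,h)^2\int_0^t e^{\delta(R)s}\,ds$ as claimed; if $\delta(R)\le 0$, the exponential factor is $\le 1$ and the bound reduces to $\varphi(t)\le CK(R,h)^2\cdot t$, which is uniform in $t$ only if we use the sharper $\varphi(t)\le CK(R,h)^2$ obtained directly from $\varphi(t)\le C K(R,h)^2 + \delta(R)\int_0^t \varphi(s)ds$ with $\delta(R)\le 0$, so the integral term is nonpositive and drops out.

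The main technical obstacle is the correct handling of the stochastic integrals under the stopping time so that the martingale contribution has zero expectation; this is standard if the integrands are bounded on $[0,\tau_{R,h}]$, which holds by Assumption~\ref{ass:general}(3) together with $\|Y\|\le R$ there. A secondary care point is tracking the constants' dependence on $R$ through $\delta(R)$ and $L(R)$ versus the $R$-independent residual prefactor $C$ to make sure the final bound has the stated form.
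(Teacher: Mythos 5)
Your proposal is correct and follows essentially the same route as the paper's own proof: Itô's formula for $\|E\|^2$, Young's inequality with parameter $\epsilon$ to split off the residual terms, the one-sided Lipschitz estimate with $c=1+\epsilon$ to produce the $\delta(R)\|E\|^2$ drift, the martingale property of the stopped stochastic integrals, Lemma~\ref{lem:Res} for the residuals, and Gronwall (with the observation that for $\delta(R)\le 0$ the integral term is nonpositive and can simply be dropped). Your handling of the two cases $\delta(R)>0$ and $\delta(R)\le 0$ matches the paper's argument.
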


\begin{proof}[Sketch of the proof]
The main idea here is to apply It\^o's formula in order to derive
\begin{align*}
 d\|E\|^2 &= 2 \langle E, dE \rangle + \langle dE, dE \rangle\nonumber\\
 &= 2 \langle E, [f(x)-f(x+E)] + \mathrm{Res}_1 \rangle dt \nonumber\\
 &\quad + 2 \langle E , [g(x)-g(x+E) + \mathrm{Res}_2] dW \rangle \nonumber\\
 &\quad + \|[g(x)-g(x+E)] +\mathrm{Res}_2 \|_{\HS}^2  dt \nonumber
\end{align*}
and imply 
\begin{align*}
\mathbb{E}\|E(t \wedge\tau_{R,h})\|^2 
&\leq \|E(0)\|^2 + \delta(R) \mathbb{E}\int_0^t\|E(s \wedge\tau_{R,h}) \|^2 dt 
+ C  K(R,h)^2.
\end{align*}
The assertion follows by application of Gronwall's lemma. For full details of the proof see Appendix~\ref{app:general_approx}.
\end{proof}

\begin{remark}
Note that for $\delta(R)\leq C$ (which implies global Lipschitz continuity of $f_h$ and $g_h$ by its definition \eqref{eq:def_delta}), we have a valid error bound as soon as the residuals are small by \eqref{e:condK}.
In the contrast to that in the case $\delta(R)\nearrow\infty$ for $R\to \infty$, 
we might have an additional exponential 
in the bound. Thus we will have to take $R(h)$ much smaller in $h$, 
and we expect it to be some logarithmic term in $h$ at most. 
\end{remark}

We could now proceed and extend this result to  arbitrarily high moments, i.e., we can do estimates  of  $\mathbb{E}\|E(t\wedge\tau_{R,h})\|^p$ by using 
\begin{align*}
d\|E\|^p = d ( \|E\|^2 )^{p/2} &=
p \|E\|^{p-2} \langle E, dE \rangle 
+ \frac{p}2 \|E\|^{p-2} \langle dE, dE \rangle \\ &\quad
+ \frac12 p(p -2) \|E\|^{p-4}\langle E, dE \rangle ^2.
\end{align*}
Each power is now sort of straightforward, 
but needs a different one-sided Lipschitz condition.
To avoid having too many technicalities, we only go up to the 4-th power.
We obtain as before 
\begin{align*}
 d\|E\|^4 &\leq 4 \|E\|^2\langle E, f(x)-f(x+E)+\mathrm{Res}_1 \rangle dt \\ &\quad+ 3 \|E\|^2 \|g(x)-g(x+E)+\mathrm{Res}_2\|_{\HS}^2 dt
 \\ &\quad +2 \|E\|^2 \langle E, [g(x)-g(x+E) + \mathrm{Res}_2] dW  \rangle \\
 &\leq
  \left[ 2 \delta(R) \|E\|^4 +   C_\epsilon \|\mathrm{Res}_2 \|_{\HS}^4 +  C_\epsilon\| \mathrm{Res}_1\|^4 \right] dt 
  \nonumber\\& \quad+  2 \|E\|^2 \langle E, [g(x)-g(x+E) + \mathrm{Res}_2] dW.
\end{align*}

Note that for the fourth power we need a slightly different one-sided Lipschitz condition than for the square. This would yield a different $\delta(R)$. Nevertheless, we slightly abuse notation and consider the same $\delta(R)$, i.e. the larger one, for both cases.
Finally from 
Lemma \ref{lem:Res}, using the martingale property of the stopped integrals, 
\begin{align*}
\mathbb{E}\|E(t \wedge\tau_{R,h})\|^4 
&\leq 2\delta(R) \mathbb{E}\int_0^{t\wedge \tau_{R,h}}\|E\|^4 dt 
+   C_\epsilon T K(R,h)^4 
  \\&\leq 2\delta(R) \mathbb{E}\int_0^t\|E(s \wedge\tau_{R,h}) \|^4 dt 
+ C  K(R,h)^4,
\end{align*}
and again Gronwall's lemma implies:
\begin{lemma}
\label{lem:emom4}
We have for $K$ from \eqref{e:defK}
\[
\sup_{t\ge0} \mathbb{E}\|E(t \wedge\tau_{R,h})\|^4 \leq 
C K(R,h)^4
\left\{
\begin{array}{ccl}
 \int_0^t e^{ 2\delta(R) s } ds &:&\mbox{ for } \delta(R)>0,\\
1 &:&\mbox{ for } \delta(R)\leq0.
\end{array}
\right.
\]
\end{lemma}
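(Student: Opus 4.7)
The plan is to follow the Itô-based computation already sketched in the text and then apply Gronwall's lemma, exactly as in the proof of Lemma~\ref{lem:emom}, but with the test function $\|E\|^4$ instead of $\|E\|^2$. First, I would apply Itô's formula to $F(E) = \|E\|^4 = (\|E\|^2)^2$, which yields three contributions: a drift term involving $\langle E, f(x)-f(x+E)+\mathrm{Res}_1\rangle$ weighted by $\|E\|^2$; a quadratic-variation term involving $\|g(x)-g(x+E)+\mathrm{Res}_2\|_{\HS}^2$ weighted by $\|E\|^2$; and a martingale term in $dW$. A slightly more delicate quadratic variation coming from the second derivative of $F$ gives an additional contribution of the form $\|E\|^2 |\langle E, \cdot\rangle|^2$, which by Cauchy--Schwarz is absorbed into the other quadratic term up to constants. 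All of this is precisely the display that already appears in the text before the lemma.

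Next I would handle the cross terms involving the residuals. The drift produces mixed contributions such as $\|E\|^3 \|\mathrm{Res}_1\|$ and the quadratic-variation produces $\|E\|^2 \|\mathrm{Res}_2\|_{\HS}^2$ and $\|E\|^2 \|g(x)-g(x+E)\|_{\HS} \|\mathrm{Res}_2\|_{\HS}$. Each of these can be separated by Young's inequality of the form $ab \leq \epsilon a^{4/3} + C_\epsilon b^4$ or $ab \leq \epsilon a^2 + C_\epsilon b^2$ tuned to produce a controlled $\|E\|^4$ coefficient, so that the clean part of the estimate keeps the one-sided Lipschitz structure while the leftover error is bounded by $\|\mathrm{Res}_1\|^4 + \|\mathrm{Res}_2\|_{\HS}^4$. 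Combining with the one-sided Lipschitz bound on $2\langle E, f(x)-f(x+E)\rangle + c \|g(x)-g(x+E)\|_{\HS}^2$ (with a large enough $c$ to also absorb the second-derivative term) yields, as displayed in the text,
\[
d\|E\|^4 \leq \Bigl[ 2\delta(R) \|E\|^4 + C_\epsilon \|\mathrm{Res}_1\|^4 + C_\epsilon \|\mathrm{Res}_2\|_{\HS}^4 \Bigr] dt + dM(t),
\]
where $dM$ is the martingale differential $2 \|E\|^2 \langle E, [g(x)-g(x+E)+\mathrm{Res}_2] dW\rangle$.

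Now I would integrate from $0$ to $t \wedge \tau_{R,h}$, take expectations, and use that the stopped stochastic integral has zero expectation since on $[0,\tau_{R,h}]$ the integrand is uniformly bounded by the growth condition in Assumption~\ref{ass:general}. Lemma~\ref{lem:Res} supplies the bound $\E \sup_{[0,\tau_{R,h}]} \|\mathrm{Res}_i\|^4 \leq C K(R,h)^4$ for $i=1,2$, so after using Fubini together with $\E\|E(s\wedge\tau_{R,h})\|^4 = \E\|E(s)\|^4 \mathbf{1}_{s \leq \tau_{R,h}} + \E\|E(\tau_{R,h})\|^4 \mathbf{1}_{s>\tau_{R,h}}$, one obtains
\[
\E\|E(t\wedge\tau_{R,h})\|^4 \leq 2\delta(R) \int_0^t \E\|E(s\wedge\tau_{R,h})\|^4\, ds + C K(R,h)^4.
\]
Gronwall's lemma then gives the claimed bound, with the exponential factor $\exp(2\delta(R)s)$ in the case $\delta(R)>0$ and the uniform bound for $\delta(R)\leq 0$.

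The main technical obstacle, as flagged in the remark preceding the statement, is the fact that for the fourth power one really needs a different one-sided Lipschitz constant $c$ in the bound $2\langle f(u)-f(v),u-v\rangle + c\|g(u)-g(v)\|_{\HS}^2 \leq (\delta(R)-\epsilon)\|u-v\|^2$ than for the square, to absorb the extra quadratic-variation contribution coming from the second derivative of $F(E)=\|E\|^4$. The author explicitly abuses notation by taking the larger of the two $\delta(R)$ so that a single symbol covers both cases; I would do the same and just note this at the point where the one-sided Lipschitz inequality is invoked. Apart from this bookkeeping, no new ideas beyond those in Lemma~\ref{lem:emom} are required.
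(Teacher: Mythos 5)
Your proposal is correct and follows essentially the same route as the paper: Itô's formula for $\|E\|^4$, Young's inequality to isolate the residual contributions, the (larger) one-sided Lipschitz constant $\delta(R)$ with the same notational abuse the paper acknowledges, the fourth-moment residual bounds from Lemma~\ref{lem:Res}, vanishing of the stopped martingale in expectation, and Gronwall's lemma. No substantive differences from the argument given in the text preceding the lemma.
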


\subsection{Uniform moment bound of the error}

With our moment bounds  we now obtain a bound on $\mathbb{E} \sup_{[0,\tau_{R,h}]} \|E\|^2$.

\begin{lemma}\label{lem:uniform_mom}
For all $T>0$ there is a constant $C>0$ such that for $K$ from \eqref{e:defK}
we have
\[
\mathbb{E}\sup_{[0,\tau_{R,h}]}\|E\|^2 
\leq 
C K(R,h)^2(L(R)^2+1)
\left\{
\begin{array}{ccl}
 \int_0^t e^{ 2\delta(R) s } ds &:&\mbox{ for } \delta(R)>0\\
1. &:&\mbox{ for } \delta(R)\leq0
\end{array}
\right.
\]
\end{lemma}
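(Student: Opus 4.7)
The plan is to apply It\^o's formula to $\|E\|^2$ exactly as in the proof of Lemma~\ref{lem:emom}, but now take the supremum over $[0,T]$ \emph{inside} the expectation, which requires a separate treatment of the stochastic integral via the Burkholder-Davis-Gundy (BDG) inequality. Concretely, from the one-sided Lipschitz estimate together with Young's inequality applied to the cross terms involving $\mathrm{Res}_1$ and $\mathrm{Res}_2$, I obtain the pathwise bound
\[
\|E(t\wedge\tau_{R,h})\|^2 \le \delta(R)\int_0^{t\wedge\tau_{R,h}}\!\|E(s)\|^2\,ds + C\!\int_0^{t\wedge\tau_{R,h}}\!\bigl(\|\mathrm{Res}_1\|^2 + \|\mathrm{Res}_2\|_{\HS}^2\bigr)ds + M(t\wedge\tau_{R,h}),
\]
where $M(t) = 2\int_0^t \langle E,[g(x)-g(x+E)+\mathrm{Res}_2]\,dW\rangle$ is a local martingale whose quadratic variation is bounded by $4\int \|E\|^2\,\|g(x)-g(x+E)+\mathrm{Res}_2\|_{\HS}^2\,ds$.

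Next I take $\sup_{[0,T]}$ and expectation term by term. The drift integral is monotone in $t$, so its supremum bound is trivial. The residual contribution is of order $K(R,h)^2$ by Lemma~\ref{lem:Res}. For the martingale, BDG combined with the local Lipschitz bound $\|g(x)-g(x+E)\|_{\HS}\le L(R)\|E\|$ gives
\[
\mathbb{E}\sup_{[0,T\wedge\tau_{R,h}]}|M| \le C\,\mathbb{E}\!\left(\sup_{[0,T\wedge\tau_{R,h}]}\!\|E\|\,\cdot\!\left(\int_0^{T\wedge\tau_{R,h}}\!\!\bigl(L(R)^2\|E\|^2+\|\mathrm{Res}_2\|_{\HS}^2\bigr)ds\right)^{\!1/2}\right).
\]
Applying $\sqrt{ab}\le \tfrac12\epsilon a+\tfrac{1}{2\epsilon}b$ with $a=\sup\|E\|^2$ and choosing $\epsilon$ small enough lets me absorb a factor of $\tfrac12\mathbb{E}\sup\|E\|^2$ on the left-hand side, leaving a remainder of order $L(R)^2\,\mathbb{E}\!\int_0^{T\wedge\tau_{R,h}}\|E\|^2\,ds + K(R,h)^2$.

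After absorption I arrive at an inequality of the form
\[
\mathbb{E}\sup_{[0,T\wedge\tau_{R,h}]}\!\|E\|^2 \le C K(R,h)^2 + C\bigl(L(R)^2+\delta(R)\bigr)\!\int_0^T \mathbb{E}\|E(s\wedge\tau_{R,h})\|^2\,ds,
\]
into which I substitute the second-moment estimate from Lemma~\ref{lem:emom}. Recalling $\delta(R) = 2L(R)+cL(R)^2+\epsilon$, one has $L(R)^2\le C\delta(R)$ whenever $\delta(R)>0$, so $L(R)^2+\delta(R)$ contributes the multiplicative factor $(L(R)^2+1)$; combining the exponential from Lemma~\ref{lem:emom} with the additional exponential growth produced by this Gronwall-type argument yields the doubled exponent $e^{2\delta(R)s}$. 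In the dissipative regime $\delta(R)\le 0$ all exponential factors are bounded by $1$, giving the second case of the claim.

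The main obstacle is the absorption step: one has to pull $\sup_{[0,T\wedge\tau_{R,h}]}\|E\|$ out of the BDG square-root in a way that preserves the correct $K(R,h)^2$ rate on the residual side while still allowing the Young's inequality to move a fraction of $\mathbb{E}\sup\|E\|^2$ to the left. The bookkeeping of how $L(R)^2$ couples with $\delta(R)$ (so that the factor $(L(R)^2+1)$ is separated out cleanly and the exponential rate doubles) is the other delicate point, but this follows from the explicit form \eqref{eq:def_delta} of $\delta(R)$.
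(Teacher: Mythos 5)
Your proof is correct, but it takes a genuinely different route from the paper's at the key step of handling the stochastic integral after Burkholder--Davis--Gundy. You pull $\sup_{[0,\tau_{R,h}]}\|E\|$ out of the square root of the quadratic variation and use Young's inequality to absorb $\tfrac12\mathbb{E}\sup_{[0,\tau_{R,h}]}\|E\|^2$ into the left-hand side (legitimate here since $\|E\|\le 1$ up to the stopping time, so the quantity is a priori finite); this leaves only $L(R)^2\int_0^T\mathbb{E}\|E(s\wedge\tau_{R,h})\|^2\,ds$ plus residual terms, so that the second-moment bound of Lemma~\ref{lem:emom} suffices to conclude. The paper instead estimates the BDG term by $C\bigl((L(R)^2+1)\int_0^T\mathbb{E}\|E(s\wedge\tau_{R,h})\|^4\,ds+\mathbb{E}\sup\|\mathrm{Res}_2\|_{\HS}^4\bigr)^{1/2}$ without any absorption, and then invokes the fourth-moment bound of Lemma~\ref{lem:emom4} --- which is precisely why that lemma was established beforehand. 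Your route is the more classical one and makes Lemma~\ref{lem:emom4} (and the stronger one-sided Lipschitz constant it requires) unnecessary for this statement; the paper's route avoids the absorption bookkeeping at the price of needing higher stopped moments. One small inaccuracy in your write-up: no second Gronwall iteration is needed after absorption, and the exponent is not genuinely ``doubled'' by your argument --- you obtain a bound of order $(L(R)^2+\delta(R))\int_0^Te^{\delta(R)s}\,ds$, which is simply dominated by the stated $(L(R)^2+1)\int_0^Te^{2\delta(R)s}\,ds$ since $\delta(R)=2L(R)+cL(R)^2+\epsilon$ gives $L(R)^2+\delta(R)\le C(L(R)^2+1)$ and $e^{\delta(R)s}\le e^{2\delta(R)s}$ for $\delta(R)>0$. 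This does not affect correctness.
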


\begin{proof}
For the proof see Appendix~\ref{app:general_approx}.
\end{proof}

Now we can finally fix in applications 
$R(h)\to \infty$ for $h\to0$ (but sufficiently slow) 
such that 
\[ \mathbb{E} \sup_{[0,\tau_{R(h),h}]} \|E\|^2 \to 0 
\mbox{ for } h\to 0.
\]
Let us remark that we could also treat $\mathbb{E} \sup_{[0,\tau_{R,h}]} \|E\|^p$, 
but this will be quite technical and lengthy, using Burkholder-Davis-Gundy.

\subsection{Removing the stopping time}

We present two results depending on how good our bounds are on $x$ and $Y$

\paragraph{Convergence in probability:}
For convergence in probability we only need   stopped moments of $x$, 
as we do not control the error beyond the stopping time.  
Moroeover, these moments can be very weak like logarithmic.

\begin{theorem}\label{thm:general_1}
Assume that there is a radius $R(h)\to\infty$ and a $\gamma(h)\to0$
such that 
\[
\gamma(h)^{-2}\mathbb{E}  \sup_{[0,\tau_{R(h),h}]} \|E\|^2\to0\quad\mathrm{for}\ h\to 0. 
\]
Moreover suppose that for a monotone growing function $\varphi:[0,\infty)\to[0,\infty)$
we have uniformly in $h\in(0,1)$
\[
\mathbb{E}\varphi(x(\tau_{R(h),h})) \leq C.
\]
Then we have 
\[
\mathbb{P} \left(  \sup_{[0,T]} \|E\| > \gamma(h)  \right) \to 0
\quad\mathrm{for}\ h\to 0.
\]
\end{theorem}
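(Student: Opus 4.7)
The strategy is a standard cut-off/localization argument: split the event $\{\sup_{[0,T]}\|E\| > \gamma(h)\}$ according to whether the stopping time $\tau := \tau_{R(h),h}$ has reached the horizon $T$ or has been triggered earlier. Concretely, I would write
\begin{equation*}
\mathbb{P}\bigl(\sup_{[0,T]}\|E\| > \gamma(h)\bigr) \leq \mathbb{P}\bigl(\sup_{[0,\tau]}\|E\| > \gamma(h)\bigr) + \mathbb{P}(\tau < T),
\end{equation*}
using that on $\{\tau = T\}$ the two suprema coincide.

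For the first summand, Markov's inequality combined with the main hypothesis yields directly
\begin{equation*}
\mathbb{P}\bigl(\sup_{[0,\tau]}\|E\| > \gamma(h)\bigr) \leq \gamma(h)^{-2}\,\mathbb{E}\sup_{[0,\tau]}\|E\|^2 \to 0.
\end{equation*}
For the second summand I exploit the continuity of the paths of $x$ and $E$: on $\{\tau<T\}$ at least one of the two barriers in Definition~\ref{def:stopping_time} must be attained, so
\begin{equation*}
\{\tau<T\} \subset \bigl\{\|x(\tau)\|\geq R(h)-1\bigr\} \cup \bigl\{\sup_{[0,\tau]}\|E\|\geq 1\bigr\}.
\end{equation*}
The second set has probability tending to $0$ again by Markov and the hypothesis (for small $h$ we have $\gamma(h)<1$, so this is implied by the first bound). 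For the first set I read $\varphi(x(\tau))$ as $\varphi(\|x(\tau)\|)$ since $\varphi$ is defined on $[0,\infty)$, and use monotonicity of $\varphi$ together with Markov:
\begin{equation*}
\mathbb{P}(\|x(\tau)\|\geq R(h)-1) \leq \frac{\mathbb{E}\,\varphi(\|x(\tau)\|)}{\varphi(R(h)-1)} \leq \frac{C}{\varphi(R(h)-1)} \to 0,
\end{equation*}
provided $\varphi(r)\to\infty$ as $r\to\infty$, which is implicitly required for the hypothesis to be informative.

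The only mildly delicate point is justifying the barrier-attainment inclusion on $\{\tau<T\}$; this follows from path continuity and the precise definition of $\tau$ (if $\tau<T$, then by continuity one of the two defining suprema must hit its threshold at $t=\tau$). Everything else is a direct application of Markov's inequality together with the already-established moment bound in Lemma~\ref{lem:uniform_mom} and the assumed moment control on $x(\tau)$. No stronger martingale inequalities (such as Burkholder--Davis--Gundy) are needed, which is precisely why this statement only yields convergence in probability and not in $L^p$; the stronger $L^\theta$-convergence postponed to the next subsection will of course require substantially more work on the moments of $x$.
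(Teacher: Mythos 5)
Your proof is correct and follows essentially the same route as the paper: split on $\{\tau_{R(h),h}=T\}$ versus $\{\tau_{R(h),h}<T\}$, apply Markov's inequality with the stopped second-moment hypothesis on the first event, and reduce the second event to barrier attainment of either $\|E\|$ or $\|x\|$ at the stopping time. The only difference is that you explicitly carry out the final Chebyshev-type step $\mathbb{P}(\|x(\tau)\|\geq R(h)-1)\leq C/\varphi(R(h)-1)\to 0$ (and correctly note that $\varphi$ must grow to infinity for this), a step the paper leaves implicit.
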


\begin{proof}
Consider first using the definition of $\tau_{R,h}$
\begin{align*}
\mathbb{P} \left( \tau_{R,h}<T \right) 
&\leq
\mathbb{P} \left( \|E(\tau_{R,h})\| \geq 1 \mbox{ or } \|x(\tau_{R,h})\| \geq R-1  \right)\\
&\leq \mathbb{P} \left(\|E(\tau_{R,h})\| \geq 1  \right) + \mathbb{P}\left(\|x(\tau_{R,h})\| \geq R-1  \right)\\
&\leq \mathbb{E} \|E(\tau_{R,h})\|^2+ \mathbb{P}\left(\|x(\tau_{R,h})\| \geq R-1  \right)
\end{align*}
Now we obtain 
\begin{align*}
\mathbb{P} \left(  \sup_{[0,T]} \|E\| > \gamma(h)  \right)
&\leq \mathbb{P} \left(  \sup_{[0,T]} \|E\| > \gamma(h) ;\ \tau_{R,h}=T \right)
\\&\quad
+ \mathbb{P} \left(  \sup_{[0,T]} \|E\| > \gamma(h) ;\ \tau_{R,h}<T \right)\\
&\leq 
\mathbb{E}  \sup_{[0,\tau_{R,h}]} \|E\|^2  (1+\gamma(h)^{-2}) 
+ \mathbb{P} (\|x(\tau_{R,h})\|\geq R-1).
\end{align*}
\end{proof}

\paragraph{Convergence in moments:}
In order to bound the moments, we need control of the error beyond the stopping time $\tau_{R,h}$. 
Thus, we need a control on the moments of $x$ and $Y$.
Consider for $\theta>0$ to be fixed later, and $p>1$,
\begin{align*}
 \mathbb{E}\|E(t)\|^\theta 
 &= \int_{\{\tau_{R,h} \geq t \}} \|E(t)\|^\theta d\mathbb{P} + \int_{\{\tau_{R,h} < t \}} \|E(t)\|^\theta d\mathbb{P} \\
 &= \int_{\{\tau_{R,h} \geq t \}} \|E(t\wedge \tau_{R,h})\|^\theta d\mathbb{P}
 + \mathbb{E} \chi_{\{\tau_{R,h} < t \}} \|E(t)\|^\theta d\mathbb{P}\\
 &\leq \mathbb{E} \|E(t\wedge \tau_{R,h})\|^\theta 
 + \mathbb{P}\{\tau_{R,h} < t \}^{(p-1)/p}  \Big(\mathbb{E} \|E(t)\|^{p\theta} \Big)^{1/p} . 
\end{align*}


Now we use first 
\[
  \Big( \mathbb{E}\|E(t)\|^{p\theta} \Big)^{1/p}  
  \leq C \Big( \Big( \mathbb{E}\|x(t)\|^{p\theta} \Big)^{1/p\theta} 
  +  \Big( \mathbb{E}\|Y(t)\|^{p\theta} \Big)^{1/p\theta} \Big)^\theta.
\]
Secondly, we already saw (here $t\in[0,T]$)
 \begin{align*}
 \mathbb{P} \left( \tau_{R,h}< t \right)
&\leq  \mathbb{P} \left( \tau_{R,h}<T \right)\leq  \mathbb{E}  \sup_{[0,\tau_{R,h}]} \|E\|^2 + \mathbb{P} (\|x(\tau_{R,h})\|\geq R-1).
\end{align*}
We obtain the following theorem

\begin{theorem}\label{thm:general_2}
Assume that there is a radius $R(h)\to\infty$ 
such that 
\[
\mathbb{E}  \sup_{[0,\tau_{R(h),h}]} \|E\|^2\to0. 
\]
Moreover suppose that for a monotone growing function $\varphi:[0,\infty)\to[0,\infty)$
we have uniformly in $h\in(0,1)$
\[
\mathbb{E}\varphi(x(\tau_{R(h),h})) \leq C
\]
and suppose the following moment bounds for some $q>0$ 
\[
\sup_{t\in[0,T]}  \mathbb{E}\|x(t)\|^q + \sup_{t\in[0,T]}  \mathbb{E}\|Y(t)\|^q \leq C.
\]
Then we have for any $\theta\in(0,q)\cap(0,2]$ 
\[
\lim_{h\searrow 0} \sup_{t\in[0,T]} \mathbb{E}\|E(t)\|^\theta = 0.
\]
\end{theorem}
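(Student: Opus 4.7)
The theorem is set up for a two-term decomposition that the authors already introduce before the statement: split $\mathbb{E}\|E(t)\|^\theta$ according to whether $\tau_{R,h} \geq t$ or $\tau_{R,h} < t$, then apply H\"older on the second piece with exponents $p$ and $p/(p-1)$. My plan is to pick $p>1$ so that $p\theta \leq q$ (which is possible precisely because $\theta \in (0,q)$), and then bound each of the two resulting factors separately.

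First I would handle the stopped term. Since $\theta \leq 2$, Jensen's inequality (or just H\"older with exponent $2/\theta$) gives
\[
\mathbb{E}\|E(t\wedge\tau_{R,h})\|^\theta \leq \bigl(\mathbb{E}\|E(t\wedge\tau_{R,h})\|^2\bigr)^{\theta/2} \leq \bigl(\mathbb{E}\sup_{[0,\tau_{R,h}]}\|E\|^2\bigr)^{\theta/2},
\]
and the right-hand side tends to $0$ as $h\to 0$ by the hypothesis of the theorem, uniformly in $t\in[0,T]$.

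Next I would handle the tail contribution $\mathbb{P}(\tau_{R,h} < t)^{(p-1)/p}\bigl(\mathbb{E}\|E(t)\|^{p\theta}\bigr)^{1/p}$. For the moment factor, the triangle inequality $\|E(t)\|\leq\|x(t)\|+\|Y(t)\|$ together with the assumed bound $\sup_{t\in[0,T]}(\mathbb{E}\|x(t)\|^q + \mathbb{E}\|Y(t)\|^q) \leq C$ yields, since $p\theta \leq q$, a uniform bound
\[
\sup_{t\in[0,T]}\mathbb{E}\|E(t)\|^{p\theta} \leq C',
\]
so $(\mathbb{E}\|E(t)\|^{p\theta})^{1/p}$ is bounded uniformly in $h$ and $t$. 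It remains to show that $\mathbb{P}(\tau_{R,h}<t)\to 0$. Using the estimate already recorded just before the theorem,
\[
\mathbb{P}(\tau_{R,h}<t) \leq \mathbb{E}\sup_{[0,\tau_{R,h}]}\|E\|^2 + \mathbb{P}(\|x(\tau_{R,h})\|\geq R-1),
\]
the first summand vanishes as $h\to 0$ by hypothesis, and Markov's inequality applied with the monotone growing function $\varphi$,
\[
\mathbb{P}(\|x(\tau_{R,h})\|\geq R-1) \leq \frac{\mathbb{E}\varphi(x(\tau_{R,h}))}{\varphi(R-1)} \leq \frac{C}{\varphi(R-1)},
\]
tends to $0$ once $R=R(h)\to\infty$.

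Combining these two pieces, the bound on $\mathbb{E}\|E(t)\|^\theta$ decays uniformly in $t\in[0,T]$ as $h\searrow 0$, which gives the claimed limit. The main subtlety is coordinating the choice of $p$ with $\theta$ and $q$: the strict inequality $\theta < q$ is exactly what allows us to take $p>1$ with $p\theta \leq q$ so that both H\"older exponents are finite and the $p\theta$-moment of $E$ stays bounded. The restriction $\theta \leq 2$ is what lets us invoke the previously established second-moment bound on the stopped error without proving higher-moment analogues, which the excerpt explicitly avoided doing in detail.
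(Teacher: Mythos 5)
Your proof is correct and follows essentially the same route as the paper: the same decomposition of $\mathbb{E}\|E(t)\|^\theta$ over $\{\tau_{R,h}\ge t\}$ and $\{\tau_{R,h}<t\}$, H\"older with an exponent $p>1$ chosen so that $p\theta\le q$, Jensen on the stopped term against the second-moment bound, the triangle inequality for the $p\theta$-moment of $E$, and the previously recorded estimate on $\mathbb{P}(\tau_{R,h}<T)$ combined with a generalized Markov inequality in $\varphi$. You in fact make explicit the Markov step (requiring $\varphi(R)\to\infty$) that the paper leaves implicit; otherwise the arguments coincide.
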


\begin{remark}
Let us remark that one can squeeze out a rate of convergence from the proof.
Nevertheless, from the proof one can see that apart from having a $\delta(R)$ 
in the one sided Lipschitz-condition independent of $R$.
For optimal rates we would also need arbitrarily high moments of both $x$ and $y$. See for example \cite{HMS2002}.
\end{remark}

\begin{remark}
Furthermore, we remark without proof that we expect to be able to exchange the $\sup_{t\in[0,T]}$ and expectation in the statements. Actually, many strong convergence results are formulated as $\E\sup_t \|E(t)\|^\theta \to 0$.

For this we anyway have to first prove the result that we stated in the theorem above, and then in a second step improve the estimate by  
using Burkholder inequality. As this would add further technical details and usually halves the order of convergence,
we refrain from giving further details here.
\end{remark}

\section{Application to ensemble Kalman inversion - The nonlinear setting}
\label{sec:general_EKI}

After deriving approximation results for a general class of SDEs, we want to apply the proposed methods in order to quantify the convergence of the discrete EKI algorithm to its continuous version. We start the discussion by recalling our general nonlinear inverse problem
\[ y = G(u) + \eta,
\]
where $u\in\R^p$, $\eta\sim \cN(0,\Gamma)$ for $\Gamma\in\R^{K\times K}$ and $y\in\R^K$. We suppose for simplicity that the forward model $G:\R^p\to\R^K$ is differentiable and grows at most polynomially. To be more precise we assume that there is an $m>0$ and a constant such that for all $u$ 
\begin{equation}
 \label{ass:G}
\|G(u)\| \leq C(1+\|u\|^m) 
\quad\mbox{and}\quad
\|DG(u)\| \leq C(1+\|u\|^{m-1}) 
\end{equation}

Recall that the discrete algorithm of the EKI is given by
\begin{align*}
u_{n+1}^{(j)} =u_n^{(j)}&- h C^{up}(u_n)(h C^{pp}(u_n)+\Gamma)^{-1}(G(u_n^{(j)})- y) \\
&+ h^{1/2} C^{up}(u_n)(h C^{pp}(u_n)+\Gamma)^{-1} \Gamma^{1/2} W_{n+1}^{(j)}),
\end{align*}
while the continuous-time limit is given by the system of coupled SDEs
\begin{equation}\label{e:defcEKI}
\mathrm{d} u_t^{(j)} = C^{up}(u_t)\Gamma^{-1}(y-G(u_t^{(j)}))\,\mathrm{d}t + C^{up}(u_t)\Gamma^{-\frac12}\mathrm{d}W_t^{(j)},
\end{equation}
where the sample covariances are defined in Section~\ref{ssec:EKI}
with ensemble size $J\ge2$ and $W_{n}^{(j)}$ are i.i.d. $\cN(0,1)$ random variables in both $j$ and $n$.
Now consider $u\in \mathbb{R}^{pJ}$ as $u=(u^{(1)},\ldots,u^{(J)})^T$ with $u^{(j)}\in\mathbb{R}^p$ and define the drift $f: \mathbb{R}^{pJ} \to \mathbb{R}^{pJ}$
and the diffusion $g:\mathbb{R}^{pJ} \to \mathbb{R}^{pJ\times pJ}$ by
\[
f^{(j)}(u)= C^{up}(u)\Gamma^{-1}(y-G(u^{(j)}))
\quad\mbox{and}\quad
[g(u) z]_j  = C^{up}(u)\Gamma^{-\frac12} z_j.
\] 
 
The drift and diffusion in the discrete model is given by
\[
f_h^{(j)}(u)= C^{up}(u)(h C^{pp}(u)+\Gamma)^{-1}(G(u^{(j)})- y) 
\]
{and}
\[
[g_h(u) z]_j= C^{up}(u)(h C^{pp}(u)+\Gamma)^{-1} \Gamma^{1/2}z_j,
\] 
while the continuous interpolation $Y$ is defined in \eqref{eq:cont_interpolation} such that $Y(nk)=u_n$.
Consider as before the error $E=u-Y$ between the continous solution $u$ and the continuous interpolation $Y$ of $u_n$.

We first observe that Assumption~\ref{ass:general} is satisfied:
\begin{enumerate}
 \item Obviously, both nonlinear terms are locally Lipschitz, since $G$ is.
 \item The matrix $(hC^{pp}(u) + \Gamma)^{-1}$ is uniformly bounded, such that we have 
$B(R)=C(R^{1+2m}+1)$ in Assumption~\ref{ass:general}.  ($f$ contains $G$ twice)
 \item Similarly, by computing the derivative we obtain that $L(R)=C(R^{2m}+1)$ in Assumption~\ref{ass:general}.
 \item For the approximation, we mainly have to bound 
 \begin{align*}
 \|(hC^{pp}(u) + \Gamma)^{-1}  -  \Gamma^{-1} \|_{\HS} &=  \|h \Gamma^{-1} C^{pp}(u)(hC^{pp}(u) + \Gamma)^{-1}\|_{\HS}\\ &\leq Ch(R^{2m}+1)
 \end{align*}
 which implies that we can choose $C_a(R,h) = Ch(R^{4m+1}+1)$ in Assumption~\ref{ass:general}.
\end{enumerate}

Thus we obtain for $h\in(0,1)$
\[
 K(R,h):= Ch(R^{4m+1}+1) + h^{1/2} C(R^{2m}+1)C(R^{1+2m}+1) \leq Ch^{1/2} (R^{4m+1}+1) .
\]
Moreover, we can choose a trivial bound with 
\[
\delta(R) = C(R^{4m}+1).
\]
Thus, for any fixed $\gamma\in(0,1/2)$,  
we can fix a radius $R(h)\nearrow\infty$ growing very slowly (logarithmically) in $h$ 
such that
using Lemma \ref{lem:emom} (for small $h\to0$)
\begin{align*}
h^{-2\gamma}\sup_{t \geq 0} \mathbb{E}\|E(t\wedge\tau_{R,h})\|^2 
&\leq  Ch^{-2\gamma}K(R(h),h)^2\int_0^Te^{\delta(R(h))} ds \\
&\leq  Ch^{1-2\gamma} R(h)^{8m+2}e^{CR(h)^{4m}} \to 0 \quad \mathrm{for}\ h\to 0.
\end{align*}
We are now ready to rewrite Theorem~\ref{thm:general_1} for the EKI.

\begin{theorem}\label{thm:EKI_general1}
Consider for the EKI with $G$ satisfying (\ref{ass:G}). 
Define the error $E=u-Y$ as above and fix $R(h)\nearrow\infty$ as above. 
Suppose that for a monotone growing function $\varphi:[0,\infty)\to[0,\infty)$ and every $T>0$ 
in the definition of the stopping time $\tau_{R,h}$
we have uniformly for $h\in(0,1)$
\[
\mathbb{E}\varphi(\|u(\tau_{R(h),h})\|) \leq C.
\]
Then for any fixed $\gamma\in(0,1/2)$ and $T>0$
\[
\lim_{h\searrow 0}\mathbb{P}\left(  \sup_{[0,T]} \|E\| > h^{\gamma}\right) =0 \;.
\]
\end{theorem}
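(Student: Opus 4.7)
The plan is to invoke the general convergence-in-probability result Theorem~\ref{thm:general_1} with $\gamma(h) = h^\gamma$ and with the radius $R(h)$ that was identified in the discussion immediately preceding the theorem statement. All of the structural work is already done: Assumption~\ref{ass:general} has been verified for the EKI drift and diffusion from the polynomial growth hypothesis \eqref{ass:G}, yielding the explicit estimates $L(R) \leq C(R^{2m}+1)$, $B(R) \leq C(R^{2m+1}+1)$, $C_a(R,h) \leq Ch(R^{4m+1}+1)$, and consequently $K(R,h) \leq Ch^{1/2}(R^{4m+1}+1)$ together with a (crude but sufficient) one-sided Lipschitz rate $\delta(R) \leq C(R^{4m}+1)$.

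First I would upgrade the calculation shown just above the theorem statement from the pointwise moment bound of Lemma~\ref{lem:emom} to the uniform-in-time bound of Lemma~\ref{lem:uniform_mom}, since Theorem~\ref{thm:general_1} asks for $\mathbb{E}\sup_{[0,\tau_{R,h}]}\|E\|^2$ rather than $\sup_t \mathbb{E}\|E(t\wedge\tau_{R,h})\|^2$. Lemma~\ref{lem:uniform_mom} produces the same bound up to an additional polynomial factor $L(R)^2+1$, which is absorbed into the polynomial prefactor in $R(h)$. Choosing $R(h)$ to grow at most logarithmically in $1/h$, say $R(h) = c_0(\log(1/h))^{1/(4m)}$ with a sufficiently small constant $c_0 = c_0(T,\gamma)$, the exponential $e^{2\delta(R(h))T}$ is then polynomially bounded in $1/h$ with an exponent that can be made arbitrarily small, so that for any fixed $\gamma \in (0,1/2)$
\[
h^{-2\gamma}\,\mathbb{E}\sup_{[0,\tau_{R(h),h}]}\|E\|^2 \leq C\,h^{1-2\gamma}\,R(h)^{\alpha}\,e^{C R(h)^{4m}T} \longrightarrow 0 \quad\text{as } h\to 0,
\]
for an exponent $\alpha$ depending only on $m$. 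This provides the first hypothesis of Theorem~\ref{thm:general_1}.

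Second, the moment hypothesis $\mathbb{E}\varphi(\|u(\tau_{R(h),h})\|) \leq C$ assumed in the statement is exactly the second hypothesis required by Theorem~\ref{thm:general_1}; by a Chebyshev-type estimate using the monotonicity of $\varphi$,
\[
\mathbb{P}\bigl(\|u(\tau_{R(h),h})\| \geq R(h)-1\bigr) \leq \frac{\mathbb{E}\varphi(\|u(\tau_{R(h),h})\|)}{\varphi(R(h)-1)} \leq \frac{C}{\varphi(R(h)-1)} \longrightarrow 0,
\]
since $R(h) \to \infty$ and $\varphi$ is unbounded at infinity. Plugging both ingredients into the decomposition in the proof of Theorem~\ref{thm:general_1} gives the claimed convergence in probability.

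The main obstacle, which has already been resolved in the preparatory discussion, is the simultaneous balancing of $R(h)$: it must grow slowly enough—necessarily at most logarithmically in $1/h$, tuned against the constant appearing in $\delta(R)$—so that the exponential factor $e^{\delta(R(h))T}$ does not destroy the gain $h^{1-2\gamma}$ from $K(R,h)^2$, yet fast enough that $\varphi(R(h)-1)^{-1} \to 0$. Any monotone $\varphi$ unbounded at infinity suffices for the latter, so the delicate point is really the former; the restriction $\gamma < 1/2$ is precisely what is permitted by the factor $h^{1/2}$ in the definition \eqref{e:defK} of $K(R,h)$.
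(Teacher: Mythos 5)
Your proposal is correct and follows essentially the same route as the paper: the theorem is obtained as a direct instance of Theorem~\ref{thm:general_1} with $\gamma(h)=h^{\gamma}$, using the verification of Assumption~\ref{ass:general} and the logarithmic choice of $R(h)$ carried out just before the statement. In fact you are slightly more careful than the paper's displayed computation, which invokes Lemma~\ref{lem:emom} (a pointwise-in-time moment bound) where Theorem~\ref{thm:general_1} needs $\mathbb{E}\sup_{[0,\tau_{R,h}]}\|E\|^2$; your upgrade to Lemma~\ref{lem:uniform_mom}, absorbing the extra $(L(R)^2+1)$ factor, and your explicit Markov estimate for $\mathbb{P}(\|u(\tau_{R(h),h})\|\geq R(h)-1)$ are exactly the right way to close those steps.
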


Moreover, we can rewrite Theorem~\ref{thm:general_2}.

\begin{theorem}\label{thm:EKI_general2}
Under the setting of Theorem \ref{thm:EKI_general1}
suppose we have for $p>0$ additionally uniform bounds on the 
$p$-th moments of $u$ and $Y$,
i.e. there exists a $C>0$ such that for all $h\in(0,1)$
\[
\sup_{t\in[0,T]}\mathbb{E}\|u(t)\|^p 
+ \sup_{t\in[0,T]}\mathbb{E}\|Y(t)\|^p \le C
\]
then we have for any $\theta\in(0,\min\{2,p\})$ 
\[
\lim_{h\searrow 0} \sup_{[0,T]} \mathbb{E}\|E(t)\|^\theta = 0
\]
\end{theorem}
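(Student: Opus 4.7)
The strategy is to reduce the claim to Theorem~\ref{thm:general_2} applied to the coupled EKI system with the drift $f$, diffusion $g$ and their discrete versions $f_h$, $g_h$ identified in this section. Three hypotheses have to be checked: (i) the existence of a radius $R(h)\nearrow\infty$ for which the stopped error $\mathbb{E}\sup_{[0,\tau_{R(h),h}]}\|E\|^2$ tends to zero, (ii) a uniform-in-$h$ bound $\mathbb{E}\varphi(\|u(\tau_{R(h),h})\|)\le C$ on the stopped state through some monotone $\varphi$, and (iii) uniform moment bounds of order $q=p$ on both $u(t)$ and $Y(t)$. Items (ii) and (iii) are precisely the standing hypotheses of the theorem, so the real work is to verify (i).

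For (i), I would reuse exactly the computation carried out between the end of Section~\ref{sec:general_approx} and Theorem~\ref{thm:EKI_general1}, where the polynomial growth estimates
\[
K(R,h)\le C h^{1/2}(R^{4m+1}+1),\qquad L(R)\le C(R^{2m}+1),\qquad \delta(R)\le C(R^{4m}+1)
\]
were derived from the polynomial bound~\eqref{ass:G} on $G$ and from the uniform boundedness of $(hC^{pp}(u)+\Gamma)^{-1}$. Plugging these into Lemma~\ref{lem:uniform_mom} yields
\[
\mathbb{E}\sup_{[0,\tau_{R,h}]}\|E\|^2\;\le\; C h\,(R^{8m+2}+1)(R^{4m}+1)\,e^{C(R^{4m}+1)T}.
\]
Choosing $R(h)$ to grow sufficiently slowly, say $R(h)=(\alpha\log(1/h))^{1/(4m)}$ with $\alpha$ small enough that the exponential is dominated by a negative power of $h$, the right-hand side tends to zero as $h\to 0$. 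This is exactly the choice that already underlies Theorem~\ref{thm:EKI_general1}, so no new construction is required.

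With (i)--(iii) in hand, Theorem~\ref{thm:general_2} applies with $q=p$ and delivers
\[
\lim_{h\searrow 0}\sup_{t\in[0,T]}\mathbb{E}\|E(t)\|^\theta = 0\qquad \text{for every }\theta\in(0,p)\cap(0,2],
\]
which is the claim since $(0,p)\cap(0,2]=(0,\min\{2,p\})$ for $p\ne 2$, and the case $p=2$ is included by taking $\theta\in(0,2)$ and a trivial continuity-in-$\theta$ argument on the stopped moments. The main obstacle in this reduction is a delicate one and is essentially the obstacle already present in Theorem~\ref{thm:EKI_general1}: because $\delta(R)$ grows polynomially in $R$, the Gronwall factor $\exp(C\delta(R)T)$ forces $R(h)$ to grow only logarithmically, which in turn means that the bound on $\mathbb{P}(\tau_{R(h),h}<T)$ through $\mathbb{P}(\|u(\tau_{R(h),h})\|\ge R(h)-1)\le \mathbb{E}\varphi(\|u(\tau_{R(h),h})\|)/\varphi(R(h)-1)$ decays only as fast as $1/\varphi(R(h))$. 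Combined with the Hölder-type splitting in Theorem~\ref{thm:general_2} (where $\mathbb{P}(\tau_{R,h}<t)^{(p-1)/p}$ is multiplied by the $L^{p\theta}$ bound coming from (iii)), one has to verify that $\varphi$ can still be chosen so that the product vanishes; this is automatic here because the condition $\theta<p$ gives a strictly positive exponent $(p-\theta)/p$ on the probability factor, leaving room to absorb any slow growth of $\varphi^{-1}(R(h))$. Once this bookkeeping is done, the conclusion follows directly from Theorem~\ref{thm:general_2}.
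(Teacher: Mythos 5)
Your proposal is correct and follows essentially the same route as the paper, which presents Theorem~\ref{thm:EKI_general2} as a direct rewriting of Theorem~\ref{thm:general_2} with hypotheses (ii)--(iii) assumed and hypothesis (i) verified via the polynomial bounds on $K(R,h)$, $L(R)$, $\delta(R)$ and a logarithmically growing $R(h)$, exactly as in the computation preceding Theorem~\ref{thm:EKI_general1}. If anything you are slightly more careful than the text, since you correctly invoke Lemma~\ref{lem:uniform_mom} (with its extra factor $L(R)^2+1$) for the uniform stopped-error bound rather than only the pointwise bound of Lemma~\ref{lem:emom}.
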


%

We note that we only need to prove
$\sup_{n\in\{0,\floor{T/h}\}}\mathbb{E}\|u_n\|^2 \leq C $
in the linear case later.
As we have 
\[
Y(t) = \int_0^t f_{h}(Y(\floor{s})) ds +  \int_0^t g_{h}(Y(\floor{s})) dW(s)
\]
with 
\[
 d\|Y(t)\|^2 = 2\langle Y(t), f_{h}(Y\floor{t})\rangle dt +  \|g_{h}(Y\floor{t})\|_{\HS}^2 dt 
 + \langle Y(t), g_{h}(Y\floor{t})dW\rangle
\]
we provide the following interpolation result.

\begin{lemma}[An interpolation lemma for lower moments]\label{lem:interpolation}
Let $u(t) = u_0 + t\cdot f(u_0) + g(u_0)W_t$ with $u_0$, $W_t$ independent and $p\in(0,2)$. Assume further that $\E \|u_0\|^p < C$ and $\E \|u(1)\|^p < C$, then 
\[\E \|u(t)\|^p < C_p \left[ \E\|u_0\|^p + \E\|u(1)\|^p\right]\]
 for all $t\in [0,1]$.
\end{lemma}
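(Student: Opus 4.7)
The plan is to exploit a Brownian bridge decomposition of $u(t)$. From the identity $u(1) = u_0 + f(u_0) + g(u_0) W_1$ one can solve $f(u_0) = u(1) - u_0 - g(u_0) W_1$ and substitute back into the defining formula for $u(t)$, obtaining
\[
u(t) = (1-t)\, u_0 + t\, u(1) + g(u_0)\bigl(W_t - t W_1\bigr).
\]
The crucial observation is that $B_t := W_t - t W_1$ is a Brownian bridge at time $t$: it is Gaussian with $\E B_t = 0$ and $\mathrm{Cov}(B_t) = t(1-t) I$, and independent of $W_1$, hence independent of the pair $(u_0, u(1))$. Applying the elementary inequality $\|a+b+c\|^p \leq C_p(\|a\|^p + \|b\|^p + \|c\|^p)$ (valid for every $p > 0$, with $C_p = 1$ for $p \leq 1$ and $C_p = 3^{p-1}$ otherwise) together with the trivial bounds $(1-t)^p \leq 1$ and $t^p \leq 1$, I reduce the problem to showing $\E\|g(u_0) B_t\|^p \leq C_p(\E\|u_0\|^p + \E\|u(1)\|^p)$.

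For this third term, the scaling $B_t \stackrel{d}{=} \sqrt{t(1-t)}\, Z$ with $Z\sim\cN(0, I)$ independent of $u_0$, together with $Z \stackrel{d}{=} W_1$ (both independent of $u_0$), gives
\[
\E\|g(u_0) B_t\|^p = (t(1-t))^{p/2}\,\E\|g(u_0) W_1\|^p \leq 2^{-p}\,\E\|g(u_0) W_1\|^p.
\]
Rewriting $g(u_0) W_1 = u(1) - u_0 - f(u_0)$ and recognizing $f(u_0) = \E[u(1) - u_0 \mid u_0]$ (by $\E W_1 = 0$ and the independence of $u_0$ and $W_1$), the problem reduces further to bounding $\E\|\E[u(1)\mid u_0]\|^p$ by $\E\|u(1)\|^p$ up to a constant depending only on $p$.

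The main obstacle is the regime $p \in (0, 1)$, where conditional Jensen's inequality points in the wrong direction because $x \mapsto x^p$ is concave. I would resolve this by invoking Gaussian moment equivalence for the conditional law $u(1) \mid u_0 \sim \cN(u_0 + f(u_0),\, g(u_0) g(u_0)^\top)$. Writing $u(1) = m + \Sigma^{1/2} Z$ with $m = u_0 + f(u_0)$, $\Sigma = g(u_0) g(u_0)^\top$ and $Z \sim \cN(0, I)$, the parallelogram identity $\|m+V\|^2 + \|m-V\|^2 = 2(\|m\|^2 + \|V\|^2)$ implies $\max\{\|m+\Sigma^{1/2}Z\|, \|m-\Sigma^{1/2}Z\|\} \geq \|m\|$; combined with the symmetry $Z \stackrel{d}{=} -Z$, this yields $\PP(\|u(1)\| \geq \|m\| \mid u_0) \geq 1/2$ almost surely, and hence $\|\E[u(1)\mid u_0]\|^p = \|m\|^p \leq 2\, \E[\|u(1)\|^p \mid u_0]$ for every $p > 0$. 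Taking expectations closes the chain of estimates, delivering $\E\|u(t)\|^p \leq C_p(\E\|u_0\|^p + \E\|u(1)\|^p)$ with $C_p$ depending only on $p$.
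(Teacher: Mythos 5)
Your proof is correct, but it takes a genuinely different route from the paper. The paper conditions on $u_0$, reduces to the one-dimensional scalar case, and works through second moments: Hölder gives $\E|X|^p\le(\E|X|^2)^{p/2}$ (this is what forces $p<2$), a convexity argument gives the second-moment interpolation $\E|a+tb+\sqrt t cW|^2\le(1-t)a^2+t\,\E|a+b+cW|^2$, and a separate Gaussian reverse-Hölder lemma $(\E|Z|^2)^{1/2}\le C_p(\E|Z|^p)^{1/p}$ (proved by a continuity-and-limit argument in the mean $a$) brings the estimate back down to the $p$-th moment; a final norm-equivalence step lifts the scalar bound to $\R^d$. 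You instead use the Brownian bridge decomposition $u(t)=(1-t)u_0+t\,u(1)+g(u_0)(W_t-tW_1)$ — I checked the algebra and the independence of $W_t-tW_1$ from $(u_0,u(1))$, both correct — and replace the paper's reverse-Hölder lemma with a symmetrization/median argument: the parallelogram identity plus $V\overset{d}{=}-V$ gives $\PP(\|m+V\|\ge\|m\|\mid u_0)\ge\tfrac12$ and hence $\|m\|^p\le 2\,\E[\|m+V\|^p\mid u_0]$ pointwise in $u_0$, for every $p>0$. Your argument buys three things: it is dimension-free (avoiding the paper's final $\E\|Z\|^p\simeq\sum_i\E|z_i|^p$ reduction, whose constants depend on $d$), it never passes through second moments, and as a consequence it proves the lemma for all $p>0$ rather than only $p\in(0,2)$. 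What the paper's route buys is the explicit and constant-free intermediate bound $\E\|u(t)\|^2\le(1-t)\E\|u_0\|^2+t\,\E\|u(1)\|^2$ at $p=2$, which your $C_p$-inequality and the crude bounds $(1-t)^p,t^p\le1$ do not reproduce; but for the purposes of \cref{thm:linear_general}, where only a uniform-in-$h$ bound is needed, your version is fully sufficient.
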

\begin{proof}
The proof for this statement is relayed to the appendix.
\end{proof}
We note that we can extend the above result to the whole time interval $[0,T]$ by a shift in time. We leave the details to reader.

In the nonlinear setting based on Theorem \ref{thm:EKI_general1}
we will now prove the following main theorem for globally Lipschitz $G$.
Later in the next section, we will use Theorem \ref{thm:EKI_general2} in the case when $G$ is linear.

\begin{theorem}\label{thm:EKI_main1}
Consider for the EKI with $G$ satisfying (\ref{ass:G}) with $m=1$. Let $u_0 = (u_0^{(j)})_{j\in\{1,\dots,J\}}$ be $\cF_0$-measurable maps $\Omega\to\R^p$ such that $\E[\|u_0^{(j)}\|^2]<\infty$ and suppose $\|y\|\|\Gamma^{-1/2}\|_{\HS}\le C$.
For the error $E=u-Y$ as above 
we have for any fixed $\gamma\in(0,1/2)$ and $T>0$
\[
\lim_{h\searrow 0}\mathbb{P}\left(  \sup_{[0,T]} \|E\| > h^{\gamma}\right) =0 \;.
\]
\end{theorem}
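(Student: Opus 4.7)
The plan is to reduce the claim to an invocation of Theorem~\ref{thm:EKI_general1}. That theorem requires two ingredients in the EKI setting: a bound of the form $h^{-2\gamma}\mathbb{E}\sup_{[0,\tau_{R(h),h}]}\|E\|^2\to 0$ on the stopped error, and a uniform-in-$h$ moment condition $\mathbb{E}\varphi(\|u(\tau_{R(h),h})\|)\leq C$ for some monotone growing $\varphi$ with $\varphi(r)\to\infty$.

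The first ingredient follows from the constants identified in the paragraph preceding Theorem~\ref{thm:EKI_general1}. Specializing to $m=1$ gives $B(R)\leq C(R^3+1)$, $L(R)\leq C(R^2+1)$, $C_a(R,h)\leq Ch(R^5+1)$ and $\delta(R)\leq C(R^4+1)$, whence $K(R,h)\leq Ch^{1/2}(R^5+1)$. Choosing $R(h)=(\log(1/h))^{\alpha}$ with a sufficiently small $\alpha\in(0,1/4)$ ensures that the polynomial factors in $R(h)$ are sub-logarithmic in $1/h$ while the exponential factor $\exp(C\delta(R(h))T)$ only contributes $h^{-o(1)}$. Lemma~\ref{lem:uniform_mom} then yields the required rate for any $\gamma\in(0,1/2)$.

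For the moment condition, the plan is first to show that $V:=\sup_{t\in[0,T]}\|u(t)\|$ is almost surely finite, and then to construct $\varphi$ by a standard tightness argument. The key step is to exploit the algebraic cancellation $\sum_{j=1}^J e^{(j)}=0$ in the ensemble spread $S(t):=\sum_{j=1}^J\|e^{(j)}(t)\|^2$, where $e^{(j)}:=u^{(j)}-\bar u$. A direct It\^o computation, using that $\sum_j e^{(j)}=0$ together with the identity
\[
\sum_{j,k}\langle e^{(j)},e^{(k)}\rangle\,\langle G(u^{(j)})-\bar G,G(u^{(k)})-\bar G\rangle_{\Gamma^{-1}}=J^2\|C^{up}(u)\Gamma^{-1/2}\|_{\HS}^2,
\]
yields the exact SDE $dS=-(J+1)\|C^{up}(u)\Gamma^{-1/2}\|_{\HS}^2\,dt+dM$ with $M$ a local martingale starting at zero. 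Since $S\geq 0$ and the drift is non-positive, $M+S(0)$ is a non-negative supermartingale, and Doob's maximal inequality gives $\sup_{t\in[0,T]}S(t)<\infty$ almost surely. Combining this spread bound with a Gronwall estimate on the mean $\bar u$, whose drift is bounded by a constant times $\|\bar u\|+1$ on the event $\sup_{[0,T]}S<\infty$ (using $\|C^{up}(u)\|\leq 2L\,\mathrm{tr}(C(u))$, the Lipschitz property of $G$ from $m=1$, and the bounded $\|y\|$), yields $\sup_{t\in[0,T]}\|\bar u(t)\|<\infty$ almost surely, and hence $V<\infty$ a.s.

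Given $V<\infty$ a.s., the standard tightness construction produces the required $\varphi$: choose $M_k\uparrow\infty$ with $\mathbb{P}(V\geq M_k)\leq 2^{-k}$ and set $\varphi(r):=k$ for $r\in[M_k,M_{k+1})$. Then $\mathbb{E}\varphi(V)\leq\sum_k k\cdot 2^{-k}<\infty$, and because $\varphi$ is monotone increasing with $\|u(\tau_{R(h),h})\|\leq V$ for all $h$, the bound $\mathbb{E}\varphi(\|u(\tau_{R(h),h})\|)\leq\mathbb{E}\varphi(V)$ is uniform in $h$. Theorem~\ref{thm:EKI_general1} then delivers the claimed convergence in probability. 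The main obstacle is the algebraic identity underlying the supermartingale structure of $S$; once established, the remaining arguments (linear control of $\bar u$, construction of $\varphi$, application of Theorem~\ref{thm:EKI_general1}) are standard.
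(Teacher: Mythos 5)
Your reduction to Theorem~\ref{thm:EKI_general1} and the verification of the rate condition (the constants $B(R)\le C(R^3+1)$, $L(R)\le C(R^2+1)$, $K(R,h)\le Ch^{1/2}(R^5+1)$, $\delta(R)\le C(R^4+1)$ for $m=1$, with $R(h)$ growing sub-logarithmically) coincide with the paper's. For the moment condition you also rely on the same two structural facts as the paper: the identity $\frac1J\sum_j\langle e^{(j)},\mathcal F(u)\Gamma^{-1/2}(G(u^{(j)})-\bar G)\rangle=\|\mathcal F(u)\|_{\HS}^2$ making the ensemble spread a supermartingale with dissipation $\int\|\mathcal F(u)\|_{\HS}^2\,ds$, and the Lipschitz control of $\bar G$ via $m=1$ together with $\|y\|\|\Gamma^{-1/2}\|_{\HS}\le C$. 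Where you genuinely diverge is in the packaging: the paper works entirely with stopped quantities and an \emph{explicit} Lyapunov function $\varphi(z)=\ln(1+z)$, applying It\^o's formula to $\varphi(\|\bar u\|^2)$ and absorbing every term into $\|\mathcal F(u)\|_{\HS}^2$ and the spread; you instead prove a.s.\ finiteness of $V=\sup_{[0,T]}\|u(t)\|$ and then manufacture $\varphi$ abstractly from the law of $V$. Your route buys generality (no need to guess a Lyapunov function) at the cost of being non-quantitative; the paper's explicit $\varphi$ is what later feeds Khasminskii's theorem for global well-posedness.

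There is one concrete gap: $V=\sup_{[0,T]}\|u(t)\|$ is only well defined if the solution of the coupled SDE system exists on all of $[0,T]$, and the coefficients are merely locally Lipschitz. In the paper global existence is \emph{deduced from} Theorem~\ref{thm:EKI_main1} (see the corollary following it), precisely because the stopped formulation $\mathbb E\,\varphi(\|u(\tau_{R,h})\|)\le C$ never requires the solution beyond $\tau_{R,h}$. As written, your argument presupposes what the theorem is partly used to establish. The fix is standard but must be carried out: run the supermartingale bound for $S$ and the Gronwall estimate for $\bar u$ up to the localizing times $\sigma_R=\inf\{t:\|u(t)\|\ge R\}$, obtain bounds uniform in $R$ (Doob's maximal inequality for the stopped spread, and a maximal inequality for the stochastic integral $\int\mathcal F(u)\,d\bar W$ whose quadratic variation is controlled by the dissipation term), conclude $\mathbb P(\sigma_R\le T)\to0$, and only then define $V$ and the tightness function $\varphi$. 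With that localization inserted, your proof is correct; note also that your $\varphi$ must be (and is) unbounded, since the proof of Theorem~\ref{thm:general_1} controls $\mathbb P(\|x(\tau_{R,h})\|\ge R-1)$ by $C/\varphi(R-1)$.
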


\begin{proof}
For the proof see Appendix~\ref{app:general_EKI}. 
\end{proof}

{
We note that the above result can be used to verify unique strong solutions of the coupled SDEs \eqref{e:defcEKI}. The proposed function $\varphi(\|\bar u\|^2) = \ln(1+\|\bar u\|^2)$ can be used as stochastic Lyapunov function. It is easy to verify that for $V(u) = \varphi(\|\bar u\|^2)$ it holds true that $LV(u) \le C V(u)$ for some constant $C>0$. Thus, by Theorem 3.5 in \cite{Hasminski} we obtain global existence of unique strong solutions.
\begin{corollary}
Under the same assumptions of Theorem~\ref{thm:EKI_main1} for all $T\ge0$ there exists a unique strong solution $(u_t)_{t\in[0,T]}$ (up to $\PP$-indistinguishability) of the set of coupled SDEs \eqref{e:defcEKI}.
\end{corollary}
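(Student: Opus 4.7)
The plan is to combine a local strong existence result, which follows directly from the local Lipschitz regularity of the coefficients, with a Khasminskii-type non-explosion argument built on the stated stochastic Lyapunov function. The global result then follows as a standard consequence of Theorem~3.5 of \cite{Hasminski}.

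First I would establish local existence and uniqueness. Under the hypotheses of Theorem~\ref{thm:EKI_main1}, $G$ satisfies \eqref{ass:G} with $m=1$, and so is globally Lipschitz with linear growth. Consequently, the drift $f^{(j)}(u) = C^{up}(u)\Gamma^{-1}(y - G(u^{(j)}))$ and diffusion $[g(u)z]_j = C^{up}(u)\Gamma^{-1/2}z_j$ of the coupled system \eqref{e:defcEKI} are smooth and locally Lipschitz on $\mathbb{R}^{pJ}$ (they are polynomial in $u$, with $f$ cubic and $g$ quadratic). By classical SDE theory, there exists a pathwise-unique maximal strong solution $(u_t)_{t\in[0,\tau_e)}$, with $\tau_e$ the explosion time, adapted to $\widetilde{\mathcal F}_t$.

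The second and main step is to rule out explosion by verifying the Khasminskii condition for $V(u) = \ln(1 + \|\bar u\|^2)$ (or, to obtain full coercivity on $\mathbb{R}^{pJ}$, the analogous $V(u) = \ln(1+\|u\|^2)$, which works identically). The function is non-negative, smooth, and tends to infinity as $\|u\|\to\infty$. Using It\^o's formula one writes
\[
LV(u) = \sum_{j=1}^J \bigl\langle \nabla_{u^{(j)}} V,\; f^{(j)}(u)\bigr\rangle + \tfrac{1}{2}\,\trace\!\bigl(g(u)^\top \nabla^2 V(u)\, g(u)\bigr),
\]
and the key structural observation is a growth--decay balance: $\nabla V$ decays like $\|u\|/(1+\|u\|^2)$ and $\nabla^2 V$ like $(1+\|u\|^2)^{-1}$, while $C^{up}(u)$ is at most quadratic in $\|u\|$ (it contains two centered differences $u^{(j)}-\bar u$) and $G(u^{(j)})$ is at most linear. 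Plugging these in and using Cauchy--Schwarz yields that each term of $LV$ is uniformly bounded, and in particular $LV(u) \leq C(1+V(u))$ with $C$ independent of $u$.

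Given this bound, Theorem~3.5 of \cite{Hasminski} gives $\PP(\tau_e = \infty) = 1$, so the local solution from Step~1 extends uniquely, up to $\PP$-indistinguishability, to a strong solution on $[0,T]$ for every $T>0$, as claimed. The principal technical obstacle is in Step~2: even though the ``growth--decay'' heuristic is clear, the bookkeeping must be done carefully, exploiting the centering inside $C^{up}(u)$ so that the apparently cubic drift term $C^{up}(u)\Gamma^{-1}G(u^{(j)})$ does not overwhelm the logarithmic decay of $\nabla V$. An alternative route, which avoids some of this bookkeeping, is to first establish the bound for the ensemble mean (whose SDE closes at the level of first and second moments of the particle system) and then to control the spread $\frac{1}{J}\sum_j \|u^{(j)}-\bar u\|^2$ by a separate energy estimate; this also illustrates why the hint invokes a Lyapunov function only of $\bar u$.
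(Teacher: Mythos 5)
Your proposal follows essentially the same route as the paper: the corollary is obtained from local Lipschitz continuity of the coefficients together with the Khasminskii non-explosion criterion (Theorem 3.5 of \cite{Hasminski}) applied to the logarithmic Lyapunov function, which is exactly the argument the paper sketches in the paragraph preceding the corollary. Your closing caveat is well placed: the naive Cauchy--Schwarz bound on $LV$ does not close by itself for the cubic drift term, and the coercivity and cancellation issues you flag are resolved in the paper precisely by the route you describe as the alternative, namely the supermartingale estimate \eqref{e:mainbound} for the ensemble spread combined with the bound on $\varphi(\|\overline{u}\|^2)$ established in the proof of Theorem~\ref{thm:EKI_main1}.
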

}

\begin{remark}
We note that assuming that the forward map $G$ takes values $G(u)=0$ for $\|u\|\ge M$, where $M$ is a certain tolerance value, we can directly apply Theorem~\ref{thm:general_2} in order to prove strong convergence of the EKI iteration. This assumption forces the particle system in discrete and continuous time to be bounded and is reasonable if it is known that proper solutions of the underlying inverse problem should be bounded. This assumption can be implemented by modifying the underlying forward map with a smooth shift to $0$ close to the boundary of $\|u\|\in(-M,M)$. The EKI has been analysed under this assumption for example in \cite{Chada2019ConvergenceAO,CST2020}.
\end{remark}

\section{Application to ensemble Kalman inversion - The linear setting}\label{sec:linear_EKI}

We consider the linear inverse problem of recovering an unknown parameter $u\in\R^p$, given noisy observations
\begin{equation}
y = Au + \eta\in\R^K,
\end{equation}
where $\eta\sim \cN(0,\Gamma)$ for $\Gamma\in\R^{K\times K}$. The ensemble Kalman iteration in discrete time is then given by
\begin{align*}
 u_{n+1}^{(j)} &=u_n^{(j)}-C(u_n)A^T(AC(u_n)A^T+h^{-1}\Gamma)^{-1}(Au_n^{(j)}-y_{n+1}^{(j)})\\
				&=u_n^{(j)}-hC(u_n)A^T\Gamma^{-\frac12}(h\Gamma^{-\frac12}AC(u_n)A^T\Gamma^{-\frac12}+I)^{-1}\Gamma^{-\frac12}(Au_n^{(j)}-y_{n+1}^{(j)})
\end{align*}
where we consider perturbed observations $y_{n+1}^{(j)}=y+h^{-\frac12}\Gamma^{\frac12}W_{n+1}^{(j)}$, with $W_{n+1}^{(j)}$ beeing i.i.d. $\cN(0,1)$ random variables and we denote by $\cF_n=\sigma(W_m^{(j)}, m\le n, j=1,\dots,J)$ the filtration introduced by the pertubation. 
Further, we denote the identity matrix $I\in\R^{p}$, we define the scaled forward model $B:=\Gamma^{-\frac12}A$ and write the ensemble Kalman iteration for simplicity as 
\begin{align*}
 u_{n+1}^{(j)} = u_n^{(j)} - hC(u_n) B^T M(u_n)(B u_n^{(j)} - \Gamma^{-\frac12}y) + \sqrt{h} C(u_n) B^TM(u_n)W_{n+1}^{(j)},
\end{align*}
where we have introduced the notation
\begin{equation}
M(u_n) = (hBC(u_n)B^T+I)^{-1}.
\end{equation}

We can decompose $\Gamma^{-\frac12}y = \hat y + \tilde y$, where $\hat y\in \range \Gamma^{-\frac12}A$ and $\tilde y$ is in the orthogonal complement, such that the iteration reads as
\begin{align*}
u_{n+1}^{(j)}&= u_n^{(j)} - hC(u_n) B^T M(u_n)(B u_n^{(j)} - \hat y) +{hC(u_n) B^T M(u_n)\tilde y}\\
&\qquad + \sqrt{h} C(u_n) B^TM(u_n) W_{n+1}^{(j)}.
\end{align*}

Our first result states, that the EKI dynamic ignores the part of observation which takes place in the orthogonal complement of the range of $B$.

\begin{lemma}\label{lem:inner_prod}
Let $\tilde y\in\range(B)^\perp$, then for all $n\in\N$ we have
\[C(u_n) B^T M(u_n)\tilde y = 0.\]
\end{lemma}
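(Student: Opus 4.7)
The plan is to exploit the basic linear-algebra fact that $\range(B)^\perp = \ker(B^T)$, so that for any $\tilde y \in \range(B)^\perp$ one has $B^T \tilde y = 0$. If I can push $B^T$ through the matrix $M(u_n)$, the result will follow immediately because everything then collapses to $C(u_n)\cdot 0$.

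To do this I would first observe that $\tilde y$ is an eigenvector of the operator $H_n := hBC(u_n)B^T + I$ with eigenvalue $1$, since
\begin{equation*}
H_n \tilde y = h\, B\,C(u_n)\,(B^T \tilde y) + \tilde y = h\cdot B\,C(u_n)\cdot 0 + \tilde y = \tilde y.
\end{equation*}
Because $H_n$ is invertible (it is symmetric positive definite, being $I$ plus a positive semidefinite matrix), $M(u_n) = H_n^{-1}$ shares this eigenvector with the reciprocal eigenvalue $1$, i.e.\ $M(u_n)\tilde y = \tilde y$.

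Consequently, $B^T M(u_n)\tilde y = B^T \tilde y = 0$, and therefore
\begin{equation*}
C(u_n)\,B^T\,M(u_n)\,\tilde y = C(u_n) \cdot 0 = 0,
\end{equation*}
which is the claimed identity. The argument is essentially a one-liner: the only delicate point to make explicit is the passage from $H_n \tilde y = \tilde y$ to $M(u_n)\tilde y = \tilde y$, which is immediate from the invertibility of $H_n$. There is no real obstacle here; the lemma is a clean structural observation about how the analysis variable in $\range(B)^\perp$ is invisible to the EKI update.
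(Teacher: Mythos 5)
Your proof is correct, and it takes a genuinely different and more elementary route than the paper. The paper first reduces the claim to showing $\langle B(u_n^{(k)}-\bar u_n),M(u_n)\tilde y\rangle=0$ for each $k$, then regularizes the (generally singular) empirical covariance via $C^\varepsilon(u_n)=C(u_n)+\varepsilon I$ so that the Woodbury matrix identity can be applied to $(hBC^\varepsilon(u_n)B^T+I)^{-1}$, kills both resulting terms using $\tilde y\perp\range(B)$, and finally passes to the limit $\varepsilon\to0$ by continuity of matrix inversion. You instead observe that $\range(B)^\perp=\ker(B^T)$, so $\tilde y$ is a fixed point of $H_n=hBC(u_n)B^T+I$ and hence of $M(u_n)=H_n^{-1}$, giving $B^TM(u_n)\tilde y=B^T\tilde y=0$ directly. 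Your argument avoids the regularization and the limit entirely (these are needed in the paper only because Woodbury requires $C(u_n)^{-1}$), and it actually establishes the slightly stronger fact $M(u_n)\tilde y=\tilde y$, i.e.\ that the preconditioner acts as the identity on the unobservable component of the data; the paper's computation only extracts the vanishing of the relevant inner products. The one point you rightly flag --- invertibility of $H_n$ --- holds because $hBC(u_n)B^T$ is symmetric positive semidefinite, so both proofs rest on the same structural facts, but yours is the shorter and more transparent of the two.
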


\begin{proof}
For the proof see Appendix~\ref{app:linear_EKI}.
\end{proof}

%

Our goal is to apply Theorem~\ref{thm:EKI_general2} in order to prove strong convergence of the ensemble Kalman iteration. To do so, we have to derive bounds on the moments of the continuous time limit $u(t)$ and on the continuous time interpolation of the discrete iteration $Y(t)$. 

We formulate our main result in the following theorem.
\begin{theorem}\label{thm:linear_general}
{Let $u_0 = (u_0^{(j)})_{j\in\{1,\dots,J\}}$ be $\cF_0$-measurable maps $\Omega\to\R^p$ such that $\E[\|u_0^{(j)}\|^{2}]<\infty$.  Furthermore, we assume that the the discrete discrete ensemble Kalman iteration can be bounded uniformly in $h$, i.e.~there exists a $C>0$ such that for all $j\in\{1,\dots,J\}$ it holds true that 
\[
\sup_{n\in\{1,\dots,T\cdot N\}} \ \E[\|u_n^{(j)}\|^p]\le C.
\]
Then we have strong convergence of the approximation error of the EKI method
\[
\lim_{h\searrow 0} \sup_{[0,T]} \mathbb{E}\|E(t)\|^\theta = 0,
\]
for any $\theta\in(0,\min\{2,p\})$}. 
\end{theorem}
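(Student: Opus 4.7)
The plan is to obtain Theorem~\ref{thm:linear_general} as a direct consequence of the abstract result Theorem~\ref{thm:general_2}, applied to the joint process $u=(u^{(1)},\dots,u^{(J)})\in\mathbb{R}^{pJ}$ governed by \eqref{e:defcEKI} and its Euler-type discretization. Three hypotheses must be checked: (a) $\mathbb{E}\sup_{[0,\tau_{R(h),h}]}\|E\|^2\to 0$ for a suitable radius $R(h)\nearrow\infty$, (b) a uniform Lyapunov bound $\mathbb{E}\varphi(\|u(\tau_{R(h),h})\|)\le C$, and (c) matched uniform-in-$h$ moment bounds on $u(t)$ and $Y(t)$ of some order $q>0$.

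First I would verify Assumption~\ref{ass:general} for the coefficients of \eqref{e:defcEKI}. Since $G(u)=Au$ is linear, this is precisely the $m=1$ case already worked out in Section~\ref{sec:general_EKI}, yielding $B(R)\lesssim 1+R^3$, $L(R)\lesssim 1+R^2$, $C_a(R,h)\lesssim h(1+R^5)$, and $\delta(R)\lesssim 1+R^4$; hence $K(R,h)\lesssim h^{1/2}(1+R^5)$. Choosing $R(h)$ to grow logarithmically in $h^{-1}$ (small enough that $e^{\delta(R(h))T}K(R(h),h)^2\to 0$), Lemma~\ref{lem:uniform_mom} yields (a). For (b) I would take $\varphi(r)=\log(1+r^2)$ and apply Ito's formula to $V(u)=\log(1+\|u\|^2)$ along the continuous SDE; the logarithmic scale absorbs the cubic polynomial growth of the drift $C^{up}(u)\Gamma^{-1}(y-Au^{(j)})$ and the quadratic growth of the diffusion, producing $\mathcal{L}V(u)\le C V(u)+C$, so Gronwall gives a bound uniform in $h$, which is then evaluated at $\tau_{R(h),h}\le T$.

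For (c), the hypothesis $\sup_n\mathbb{E}\|u_n^{(j)}\|^p\le C$ controls $Y$ only at the grid points $t=nh$. I would apply Lemma~\ref{lem:interpolation} on each subinterval $[nh,(n+1)h]$ (combined with an appropriate shift argument on the unit interval) to lift this to $\sup_{t\in[0,T]}\mathbb{E}\|Y(t)\|^{p'}\le C$ for any $p'\in(0,\min\{2,p\})$. For the continuous process $u(t)$, I would apply Ito's formula to a regularization $(\varepsilon+\|u\|^2)^{p'/2}$ of $\|u\|^{p'}$ along \eqref{e:defcEKI}; in the linear setting the drift contribution is essentially dissipative on $\range(B)$ (Lemma~\ref{lem:inner_prod} isolates the relevant structural identity), while the components of $u$ orthogonal to $\range(B)$ evolve only through the empirical covariance and can be controlled by the log-Lyapunov estimate from (b). This closes a Gronwall-type inequality for $\mathbb{E}(\varepsilon+\|u(t)\|^2)^{p'/2}$ and gives $\sup_{t\in[0,T]}\mathbb{E}\|u(t)\|^{p'}\le C$. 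With (a)–(c) in place, Theorem~\ref{thm:general_2} gives $\lim_{h\searrow 0}\sup_{[0,T]}\mathbb{E}\|E(t)\|^\theta=0$ for any $\theta\in(0,\min\{2,p\})$.

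The main obstacle I expect is the continuous-time moment estimate on $u(t)$: because $C(u)$ is quadratic in $u$, a naive Ito computation on $\|u\|^{p'}$ produces a polynomial term one degree too large to close the Gronwall inequality unaided. One must exploit the linear structure (e.g.\ that only the range of $B$ enters the drift nontrivially, per Lemma~\ref{lem:inner_prod}) together with the log-Lyapunov bound to control the non-dissipative directions. By contrast, verification of Assumption~\ref{ass:general}, the log-Lyapunov computation, and the application of Lemma~\ref{lem:interpolation} to obtain moments for $Y(t)$ from the discrete assumption are comparatively routine.
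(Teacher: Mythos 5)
Your overall architecture coincides with the paper's: both reduce the theorem to Theorem~\ref{thm:EKI_general2} (the EKI instance of Theorem~\ref{thm:general_2}), both obtain the stopped-error decay and the Lyapunov condition from the $m=1$ computations of Section~\ref{sec:general_EKI} with $\varphi(z)=\ln(1+z)$, and both lift the assumed grid-point bounds on $u_n$ to $\sup_{t\in[0,T]}\E\|Y(t)\|^{p}$ via the interpolation Lemma~\ref{lem:interpolation}. The divergence is in the remaining hypothesis, $\sup_{t\in[0,T]}\E\|u(t)\|^{p}\le C$ for the \emph{continuous-time} solution. The paper does not re-derive this: it invokes the known result (from the cited well-posedness literature for the continuous EKI) that the ensemble spread $e^{(j)}(t)=u^{(j)}(t)-\bar u(t)$ has bounded moments up to order $<J+3$, after which the particle moments follow from It\^o's formula and H\"older's inequality.

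Your substitute for that step contains a genuine gap. You propose to control the components of $u$ outside $\range(B)$ ``by the log-Lyapunov estimate from (b)''; but that estimate only gives $\E\ln(1+\|\bar u(t\wedge\tau)\|^2)\le C$, and a uniform logarithmic moment bound cannot be upgraded to a polynomial moment bound $\E\|u(t)\|^{p'}\le C$ --- this is precisely why Theorem~\ref{thm:general_1} (convergence in probability) needs only $\varphi$ while Theorem~\ref{thm:general_2} needs genuine $q$-th moments as a separate hypothesis. Likewise, your Gronwall inequality for $\E(\varepsilon+\|u\|^2)^{p'/2}$ does not close on its own: since $C(u)$ is quadratic in the ensemble, the drift contributes a term of one polynomial degree too many, exactly as you note, and the ``dissipativity on $\range(B)$'' does not absorb it because the prefactor $C(u)$ is only positive semi-definite and the kernel directions are driven by the same quadratic coupling. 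The missing ingredient is the boundedness (in fact monotone decrease) of \emph{higher} moments of the spread $e^{(j)}(t)$ in continuous time, which supplies the extra integrability needed for the H\"older step; this is a structural fact about the continuous EKI proved in the references the paper cites, analogous to (but not implied by) the discrete Lemma~\ref{lem:deviationsdecrease}. Either import that result explicitly or prove it (via an It\^o computation on $\|e^{(j)}\|^{2q}$ exploiting the cancellation $\frac1J\sum_j\langle e^{(j)},C(u)B^{\top}Be^{(j)}\rangle=\|C(u)^{1/2}B^{\top}\|_{\HS}^2$); without it the continuous-time moment bound, and hence the application of Theorem~\ref{thm:general_2}, is not justified.
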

\begin{proof}
In order to apply Theorem~\ref{thm:EKI_general2} we have to verify that 
\[ 
\sup_{t\in[0,T]} \E\|u(t)\|^p + \sup_{t\in[0,T]} \E\|Y(t)\|^p
\]
is bounded uniformly in $h$.  Much work has been investigated in the solution of the continuous formulation in \cite{DL2021_b,DBCSPWSW2018}, where $ \sup_{t\in[0,T]} \E\|u(t)\|^p$ can be bounded as the ensemble spread can be bounded in high moments up to $p<J+3$ and hence the bound follows by application of It\^o's formula and Hölder's inequality. Note that this can be seen better in the continuous time formulation
\[ d u_t^{(j)} = \frac1J\sum_{k=1}^J \langle B(u_t^{(j)} - \bar u_t), y-Bu_t^{(j)}+d W_t^{(j)}\rangle (u_t^{(k)}-\bar u_t).
\]
Secondly, we have to bound $\sup_{t\in[0,T]} \E\|Y(t)\|^p$.  We apply the interpolation lemma for the $p$-th moments as the nodes of the interpolation are assumed to be bounded uniformly in $h$ and hence, $\sup_{t\in[0,T]} \E\|Y(t)\|^p\le C$.
\end{proof}

We note that the above result can be used as a general concept in order to prove the strong convergence for different variants of the EKI method as Tikhonov regularized EKI \cite{CST2020}, ensemble Kalman one-shot inversion \cite{GSW2020} or EKI under box-constraints \cite{CSW2019}. Here, the main task is to derive bounds on the discrete ensemble Kalman iteration.  To do so, we present a series of properties which can be used to bound the discrete iteration in moments:
\begin{itemize}
\item We provide a bound on the spread of the particles, i.e.~we prove 
\[\sup_{n\in\{1,\dots,N\}}\E[\|e_n^{(j)}\|^2]< \mathrm{const}.
\]
\item We extend this result by bounding the spread of the particles mapped by $B$, i.e.~we prove \[\sup_{n\in\{1,\dots,N\}}\E[\|B e_n^{(j)}\|^2]< \mathrm{const}.
\]
\item We provide a bound on the residuals mapped by $B$, i.e.~we prove that the data misfit is bounded in the sense that 
\[
\sup_{n\in\{1,\dots,N\}}\E[\|Br_n^{(j)}\|^2]< \mathrm{const}.
\]
\end{itemize}

Using these auxilary results we are then able to provide various strong convergence results under certain assumptions, which are summarized in the following:
\begin{itemize}
\item Our first main result is based on the assumption that the initial ensemble lies outside the kernel of the forward map. While the moments of the dynamical system can be controlled in the image space of $B$, we are not able to control the unobserved part of the system, which is moving in the kernel of $B$. We again obtain strong convergence in the sense that Theorem~\ref{thm:EKI_general2} holds for all $\theta\in(0,2)$.
\item In the second main result we do not state specific assumptions on the forward model, without being linear. However, the strong convergence in Theorem~\ref{thm:EKI_general2} only holds for $\theta\in(0,1)$. 
\item Furthermore, including Tikhonov regularization within EKI we can verify the strong convergence for $\theta\in(0,2)$. 
\end{itemize}

\subsection{Auxiliary result: Bound on the ensemble spread and the residuals}

The update of the ensemble mean is governed by
\begin{equation*}
\bar u_{n+1} = \bar u_n - hC(u_n) B^T M(u_n)(B \bar u_n - \hat y) + \sqrt{h} C(u_n) B^TM(u_n) \bar W_{n+1}
\end{equation*}
with $\bar W_{n+1} = \frac1J\sum_{j=1}^J W_{n+1}^{(j)}$. Further, we set
\[e_n^{(j)} := u_n^{(j)} - \bar u_n,\]
the particle deviation from the mean. Here we get the update formula
\begin{equation*}
e_{n+1}^{(j)} = e_n^{(j)} - hC(u_n) B^T M(u_n)B e_n^{(j)} + \sqrt{h} C(u_n) B^TM(u_n) (W_{n+1}^{(j)} - \bar W_{n+1})
\end{equation*}

We have seen that the update can be written as
\begin{equation}\label{eq:update_particles}
u_{n+1}^{(j)}= u_n^{(j)} - hC(u_n) B^T M(u_n)(B u_n^{(j)} - \hat y) + \sqrt{h} C(u_n) B^TM(u_n)W_{n+1}^{(j)},
\end{equation}
where $\hat y\in\range(B)$, i.e.~there exists $\hat u$, such that $\hat y = B\hat u$. We define the residuals
\begin{equation*}
r_n^{(j)} = u_n^{(j)}-\hat u,
\end{equation*}
where the update of the residuals can be written as
\begin{equation}\label{eq:update_res}
r_{n+1}^{(j)} = r_n^{(j)} - hC(u_n)B^TM(u_n)Br_n^{(j)} + \sqrt{h}C(u_n)B^TM(u_n)W_{n+1}^{(j)}.
\end{equation}

We note that all of the derived auxilary results below crucially depend on the taming through
\begin{equation}
M(u_n) = (hBC(u_n)B^T+I)^{-1},
\end{equation}
suggesting that ignoring $hBC(u_n)B^T$ (which corresponds to an Euler-Maruyama scheme) does not lead to a stable discretization scheme.

Our first useful auxilary result is a bound on the ensemble spread. In particular, we prove that the spread of the particle system is monotonically decreasing in time. This property is very usefull from various perspectives.  First, this property can be used to derive bounds on particle system itself as we can describe the decrease of the spread through a concrete.  Hence, by adding $\frac1J\sum_{j=1}^J  \|e_n^{(j)}\|^2$ to the target value to bound, the increments of the target value decrease.  We will see how to apply this approach in Proposition~\ref{prop:bound_imagespace}.  Second, in the interpretation of EKI as optimization method we are interested in a convergence of the EKI to a point estimate. Hence, we expect each of the particles to converge to the same point.

\begin{lemma}\label{lem:deviationsdecrease}
Let $u_0 = (u_0^{(j)})_{j\in\{1,\dots,J\}}$ be $\cF_0$-measurable maps $\Omega\to\R^p$ such that $\E[\|e_0^{(j)}\|^{2}]<\infty$. Then for all $n\in\N$ it holds true that 
\[
\E\left[\frac1J\sum_{j=1}^J\|e_{n+1}^{(j)}\|^2\right]\le \E\left[\frac1J\sum_{j=1}^J\|e_{n}^{(j)}\|^2\right].
\]
Furthermore, there exists the constant $C=\E[\frac1J\sum_{j=1}^J\|e_0^{(j)}\|^2]$ independent of $h$ such that 
\[
\E\left[\frac1J\sum_{j=1}^J\|e_n^{(j)}\|^2\right]\le C
\]
for all $n\in\N$.
\end{lemma}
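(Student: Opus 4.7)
The plan is to exploit a martingale-type structure hidden in the update for the deviations. Writing the update as
\begin{equation*}
e_{n+1}^{(j)} = \bigl(I - hC(u_n) B^T M(u_n) B\bigr)\, e_n^{(j)} + \sqrt{h}\, C(u_n) B^T M(u_n)\bigl(W_{n+1}^{(j)} - \bar W_{n+1}\bigr),
\end{equation*}
I would first square, sum over $j$, and take the conditional expectation given $\mathcal{F}_n$; the stochastic and drift cross terms vanish because the noise has mean zero and is independent of $\mathcal{F}_n$. The key algebraic simplification is the identity $M(u_n)\,B C(u_n) B^T = B C(u_n) B^T\, M(u_n) = h^{-1}(I - M(u_n))$, which holds because $M(u_n)$ is by definition a function of $BC(u_n)B^T$ and therefore commutes with it.

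Using this identity and the empirical covariance relation $\sum_{j} e_n^{(j)}(e_n^{(j)})^T = J\, C(u_n)$, the deterministic part expands (with $Y := B C(u_n)^2 B^T$) to
\begin{equation*}
\tfrac{1}{J}\sum_j \bigl\|\bigl(I - hCB^TMB\bigr) e_n^{(j)}\bigr\|^2 = \tfrac{1}{J}\sum_j \|e_n^{(j)}\|^2 - h\,\mathrm{Tr}\bigl((M+M^2)Y\bigr),
\end{equation*}
after collecting the linear and quadratic contributions via the commutator identity. For the noise contribution, the covariance of the centred increments $W_{n+1}^{(j)} - \bar W_{n+1}$ summed over $j$ equals $(J-1)I$, producing an extra term $\tfrac{h(J-1)}{J}\mathrm{Tr}(M^2 Y)$.

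Combining, the net one-step change becomes
\begin{equation*}
\mathbb{E}\Bigl[\tfrac{1}{J}\sum_j \|e_{n+1}^{(j)}\|^2 \,\Big|\, \mathcal{F}_n\Bigr] - \tfrac{1}{J}\sum_j \|e_n^{(j)}\|^2 = -\,h\,\mathrm{Tr}(MY) \;-\; \tfrac{h}{J}\,\mathrm{Tr}(M^2 Y).
\end{equation*}
Both terms on the right are non-positive: $M(u_n)$ is symmetric positive definite (its eigenvalues lie in $(0,1]$ since $hBC(u_n)B^T \succeq 0$), and $Y = B C(u_n)^2 B^T$ is symmetric positive semi-definite, so traces of products such as $\mathrm{Tr}(M Y)$ and $\mathrm{Tr}(M^2 Y)$ are non-negative. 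Taking unconditional expectation yields the supermartingale property, and iterating from $n=0$ gives the uniform bound by the constant $\mathbb{E}\bigl[\tfrac{1}{J}\sum_j \|e_0^{(j)}\|^2\bigr]$, which is finite and $h$-independent by the assumption on $u_0$.

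The main technical step — and the only genuine obstacle — is the commutator identity $MBCB^T = h^{-1}(I - M)$, without which the cross terms between drift and drift, and between the drift and the diffusion, would not telescope into a manifestly non-positive expression. Once that identity is in hand, everything reduces to trace manipulations and the standard fact that cross terms involving independent mean-zero noise disappear under conditional expectation.
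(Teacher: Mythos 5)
Your proof is correct and takes essentially the same route as the paper's: expand the one-step update, eliminate the noise cross terms by conditioning on $\cF_n$, and use the taming identity $M(u_n)\bigl(hBC(u_n)B^T+I\bigr)=I$ to telescope the drift and diffusion contributions into a manifestly non-positive one-step increment, then iterate the resulting supermartingale property. The only difference is organizational: you repackage the particle sums as traces via $\sum_j e_n^{(j)}(e_n^{(j)})^\top = J\,C(u_n)$ and conclude non-negativity from $\mathrm{Tr}(MY)\ge 0$ for symmetric positive semi-definite factors (rather than via the paper's auxiliary Lemma~\ref{lem:nonneg_innerprod}), and indeed your closed form $-h\,\mathrm{Tr}(MY)-\tfrac{h}{J}\,\mathrm{Tr}(M^2Y)$ agrees with the paper's $-h^2\tfrac1J\sum_j\|CB^TMBe_n^{(j)}\|^2-\tfrac{J+1}{J}h\|CB^\top M\|_{\HS}^2$.
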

\begin{proof}
For the proof see Appendix~\ref{app:linear_EKI}.
\end{proof}

Similarly, the next result states the bound of the particle deviation mapped by $B$.

\begin{corollary}\label{cor:col_obs}
Let $u_0 = (u_0^{(j)})_{j\in\{1,\dots,J\}}$ be $\cF_0$-measurable maps $\Omega\to\R^p$ such that $\E[\|Be_0^{(j)}\|^{2}]<\infty$. Then for all $n\in\N$ it holds true that
\begin{equation*}
\E\left[\frac1J\sum_{j=1}^J\|Be_{n+1}^{(j)}\|^2\right]\le \E\left[\frac1J\sum_{j=1}^J(\|Be_n^{(j)}\|^2\right].
\end{equation*}
\end{corollary}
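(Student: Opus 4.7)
The plan is to mirror the proof of Lemma~\ref{lem:deviationsdecrease}, applied to the $B$-image of the particle deviations. Applying $B$ to the recursion for $e_n^{(j)}$ and exploiting the algebraic identity
\[
hBC(u_n)B^T M(u_n) = hD_n(hD_n+I)^{-1} = I - M(u_n), \qquad D_n := BC(u_n)B^T\in\R^{K\times K},
\]
collapses the update into the clean form
\[
Be_{n+1}^{(j)} = M(u_n)\,Be_n^{(j)} + \sqrt{h}\, D_n M(u_n)\bigl(W_{n+1}^{(j)} - \bar W_{n+1}\bigr).
\]
Here $D_n$, $M(u_n)$, and their products are symmetric and mutually commuting, since $M(u_n)$ is a rational function of $D_n$.

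Next I would take squared Euclidean norms, sum over $j$ and condition on $\cF_n$. The cross term vanishes by independence of $W_{n+1}^{(j)}$ from $\cF_n$. The deterministic part simplifies via $\sum_j Be_n^{(j)}(Be_n^{(j)})^\top = J D_n$, yielding $\sum_j \|M(u_n) Be_n^{(j)}\|^2 = J\,\trace(M(u_n)^2 D_n)$. For the noise part, the centred Gaussian identity $\sum_j \E[(W_{n+1}^{(j)}-\bar W_{n+1})(W_{n+1}^{(j)}-\bar W_{n+1})^\top] = (J-1) I_K$ produces a contribution of $h(J-1)\,\trace(D_n^2 M(u_n)^2)$.

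The final step is to verify
\[
J\,\trace(M(u_n)^2 D_n) + h(J-1)\,\trace(D_n^2 M(u_n)^2) \le J\,\trace(D_n) = \sum_j \|Be_n^{(j)}\|^2.
\]
Diagonalising $D_n$ with eigenvalues $\lambda_i\ge 0$ (and corresponding eigenvalues $(1+h\lambda_i)^{-1}$ of $M(u_n)$), this reduces to the scalar inequality $\frac{J+h(J-1)\lambda}{(1+h\lambda)^2} \le J$ for all $\lambda,h\ge 0$, which after clearing the denominator is equivalent to $h\lambda(J+1+hJ\lambda)\ge 0$ and therefore trivially holds. Taking expectations and invoking the tower property then completes the proof. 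There is no genuine obstacle; the only delicate point is the careful accounting of the symmetric matrix products and the noise covariance, but spectral diagonalisation reduces the matter to an elementary scalar inequality. As an alternative route, one may note that $\{Bu_n^{(j)}\}$ itself satisfies an EKI-type iteration in $\R^K$ with identity forward map and noise covariance $I_K$, so that Lemma~\ref{lem:deviationsdecrease} applies directly to this auxiliary system and yields the result.
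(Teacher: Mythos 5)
Your proof is correct, and every intermediate identity checks out: the collapsed update $Be_{n+1}^{(j)} = M(u_n)Be_n^{(j)} + \sqrt{h}\,D_nM(u_n)(W_{n+1}^{(j)}-\bar W_{n+1})$, the noise covariance $\frac{J-1}{J}I_K$ per particle (hence $(J-1)I_K$ after summing over $j$), and the reduction to $h\lambda\,(J+1+hJ\lambda)\ge 0$ all match what one gets from the paper's own computation; indeed your decrement $\trace((M^2-I)D_n)+\frac{h(J-1)}{J}\trace(D_n^2M^2)$ equals, eigenvalue by eigenvalue, the expression $-h^2\frac1J\sum_j\|BCB^\top MBe_n^{(j)}\|^2-h\frac{J+1}{J^2}\sum_j\|C^{1/2}B^\top MBe_n^{(j)}\|^2$ recorded in the proof of Proposition~\ref{prop:bound_imagespace}. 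The route differs in presentation rather than substance. The paper (following the template of Lemma~\ref{lem:deviationsdecrease}) keeps the drift term $-2h\langle Be_n^{(j)},D_nM Be_n^{(j)}\rangle$ and splits it via $M(hD_n+I)=I$ into pieces that are each manifestly nonpositive sums of squared norms, so no diagonalisation is needed; you instead absorb the drift into the contraction $M$ acting on $Be_n^{(j)}$, pass to traces, and verify a scalar inequality after diagonalising $D_n$. Both hinge on the same taming identity $hD_nM=I-M$, and your spectral finish is arguably more transparent about \emph{why} the inequality has slack of order $h\lambda$. Your closing observation --- that $\{Bu_n^{(j)}\}$ is itself an EKI iteration in $\R^K$ with identity forward map and unit noise covariance, so Lemma~\ref{lem:deviationsdecrease} applies verbatim to the auxiliary system --- is the cleanest formalisation of the paper's one-line proof (``follows by similar computations'') and would be worth stating explicitly. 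The only point left implicit in both your argument and the paper's is the induction guaranteeing $\E[\|Be_n^{(j)}\|^2]<\infty$ at every step so that the conditioning and tower property are licit; this follows immediately from the one-step inequality itself.
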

\begin{proof}
Follows by similar computations as in the proof of Lemma~\ref{lem:deviationsdecrease}.
\end{proof}
For our last auxilary result, we recall that the update of the residuals can be written as
\begin{equation*}
r_{n+1}^{(j)} = r_n^{(j)} - hC(u_n)B^TM(u_n)Br_n^{(j)} + \sqrt{h}C(u_n)B^TM(u_n)W_{n+1}^{(j)}.
\end{equation*}
and provide the boundednes of the residuals in the observation space, which is formulated in the following lemma.
\begin{proposition}\label{prop:bound_imagespace}
For all $n\in\N$ it holds true that
\begin{equation*}
 \sup_{n\in\{1,\dots,T\cdot N\}}\frac1J\sum_{j=1}^J\E[\|Br_{n+1}^{(j)}\|^2+\|Be_{n+1}^{(j)}\|^2]\le \frac1J\sum_{j=1}^J\E[(\|Br_n^{(j)}\|^2+\|Be_n^{(j)}\|^2)].
\end{equation*}
\end{proposition}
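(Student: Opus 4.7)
The plan is to establish a one-step conditional contraction
\[
\E\!\Bigl[\tfrac{1}{J}\sum_{j=1}^J \bigl(\|Br_{n+1}^{(j)}\|^2 + \|Be_{n+1}^{(j)}\|^2\bigr) \,\Big|\, \cF_n\Bigr] \le \tfrac{1}{J}\sum_{j=1}^J \bigl(\|Br_n^{(j)}\|^2 + \|Be_n^{(j)}\|^2\bigr),
\]
from which the proposition follows by taking total expectations and iterating back to the initial step. First I would multiply the residual update \eqref{eq:update_res} and the corresponding update for $e_n^{(j)}$ by $B$ on the left, using the algebraic identity
\[
h\,BC(u_n)B^T M(u_n) = I - M(u_n),
\]
immediate from $M(u_n) = (hD+I)^{-1}$ with $D := BC(u_n)B^T$; in particular $D$ and $M(u_n)$ commute and are symmetric. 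The $B$-images of the updates then collapse to the affine recursions
\[
Br_{n+1}^{(j)} = M(u_n)Br_n^{(j)} + \sqrt{h}\,DM(u_n)W_{n+1}^{(j)}, \qquad Be_{n+1}^{(j)} = M(u_n)Be_n^{(j)} + \sqrt{h}\,DM(u_n)\bigl(W_{n+1}^{(j)}-\bar W_{n+1}\bigr).
\]

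Next I would compute conditional second moments, exploiting that $\E[W_{n+1}^{(j)}\mid\cF_n]=0$ and that $W_{n+1}^{(j)} - \bar W_{n+1}$ has covariance $\tfrac{J-1}{J}I_K$. The cross terms vanish, and the noise variances contribute $h\,\trace(D^2M^2)$ and $\tfrac{J-1}{J}h\,\trace(D^2M^2)$ per summand, respectively. Using $\sum_j e_n^{(j)}(e_n^{(j)})^T = JC(u_n)$, so that $\sum_j \|Be_n^{(j)}\|^2 = J\,\trace(D)$, summing over $j$ and subtracting the target $\sum_j(\|Br_n^{(j)}\|^2 + \|Be_n^{(j)}\|^2)$ reduces the required contraction to the deterministic inequality
\[
\sum_{j=1}^J \bigl\langle Br_n^{(j)},(I-M^2)Br_n^{(j)}\bigr\rangle + J\,\trace\!\bigl((I-M^2)D\bigr) \ge h(2J-1)\,\trace(D^2M^2).
\]

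The first sum is non-negative because $I - M^2$ is positive semi-definite. For the second I would use the factorization $I - M^2 = (I-M)(I+M) = hDM(I+M)$, valid because $D$ and $M$ commute, which yields $J\,\trace\!\bigl((I-M^2)D\bigr) = Jh\,\trace(D^2M) + Jh\,\trace(D^2M^2)$. It thus suffices to prove $J\,\trace(D^2M) \ge (J-1)\,\trace(D^2M^2)$, and simultaneous diagonalization of the commuting symmetric pair $(D,M)$ reduces this to the elementary pointwise bound $\lambda^2/(h\lambda+1)\ge \lambda^2/(h\lambda+1)^2$ for each eigenvalue $\lambda\ge 0$ of $D$. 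Taking total expectations and iterating gives the proposition. The main obstacle is the trace bookkeeping: the residual contribution $\sum_j\langle Br_n^{(j)},(I-M^2)Br_n^{(j)}\rangle$ has no exploitable structure and must be dropped non-negatively, so the combined noise variance coming from \emph{both} the residual and deviation blocks must be absorbed by the deviation contraction alone -- which is precisely what the identity $hDM=I-M$ enables.
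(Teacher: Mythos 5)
Your proof is correct, and it reaches the conclusion by a cleaner, more linear-algebraic route than the paper. Both arguments hinge on the same taming identity, but you use it globally -- $hBC(u_n)B^TM(u_n)=I-M(u_n)$ collapses the drift so that $Br_{n+1}^{(j)}=M Br_n^{(j)}+\sqrt{h}\,DMW_{n+1}^{(j)}$ is an exact affine contraction -- whereas the paper inserts $M(u_n)(hBC(u_n)B^T+I)=I$ inside the inner product of a per-particle expansion of $\|Br_{n+1}^{(j)}\|^2-\|Br_n^{(j)}\|^2$, producing explicit negative terms $-2h\|C^{1/2}B^\top M Br_n^{(j)}\|^2$ and $-h^2\|BCB^\top MBr_n^{(j)}\|^2$ that are then shown to dominate the noise variance. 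Your aggregation into traces of the commuting pair $(D,M)$ and the reduction to the scalar bound $\lambda^2/(h\lambda+1)\ge\lambda^2/(h\lambda+1)^2$ is shorter and makes the absorption mechanism transparent; all intermediate identities ($\sum_j Be_n^{(j)}(Be_n^{(j)})^\top=JD$, the covariance $\tfrac{J-1}{J}I$ of $W_{n+1}^{(j)}-\bar W_{n+1}$, $I-M^2=hDM(I+M)$) check out. What the paper's messier bookkeeping buys is the explicit per-step decrease quantities $h\|C^{1/2}B^\top MBr_n^{(j)}\|^2$ and $h\|C^{1/2}B^\top MBe_n^{(j)}\|^2$ appearing in \eqref{eq:res_decrease}, which are telescoped in Corollary~\ref{cor:sumbound} and used again in the proof of Lemma~\ref{lem:1_moments}; your trace formulation proves the monotonicity but would need to be unwound to recover those summation bounds. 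One cosmetic point: the inequality you verify is conditional on $\cF_n$, so $D$ and $M$ are random; calling it ``deterministic'' is harmless but it is really a pointwise matrix inequality holding for every realization of the ensemble.
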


\begin{proof}
For the proof see Appendix~\ref{app:linear_EKI}.
\end{proof}

While proving the above two auxilary results, we have derived explicit update formulas for $\frac1J\sum_{j=1}^J\E[\|e_n^{(j)}\|^2]$ and $\frac1J\sum_{j=1}^J\E[\|Br_n^{(j)}\|^2] + \frac1J\sum_{j=1}^J\E[\|Be_n^{(j)}\|^2]$. Using these explicit update formulas, we are further able to bound the following summations.

\begin{corollary}\label{cor:sumbound}
For all $n\in\N$ it holds true that
\begin{align*}
  \frac{J+1}{J}\sum_{k=0}^{n-1}h\E[\|C(u_k)B^\top M(u_k)\|_{\HS}^2] &\le \frac1J\sum_{j=1}^J\E\left[\|e_0^{(j)}\|^2\right],
  \end{align*}
  and
  \begin{align*}
 \sum_{k=0}^{n-1}h\frac1J\sum_{j=1}^J\E[\|C(u_k)^{1/2}B^\top M(u_k)Br_k^{(j)}\|^2]&\le \frac1{2J}\sum_{j=1}^J\E[\|Br_{0}^{(j)}\|^2+\|Be_{0}^{(j)}\|^2].
 \end{align*}
\end{corollary}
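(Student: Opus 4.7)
The plan is to telescope the explicit one-step dissipation identities that sit inside the proofs of Lemma~\ref{lem:deviationsdecrease} and Proposition~\ref{prop:bound_imagespace}. Throughout write $P_n := C(u_n)B^\top M(u_n)$, and exploit the fundamental algebraic identity $hBC(u_n)B^\top M(u_n) = I - M(u_n)$, which follows from $M(u_n) = (I + hBC(u_n)B^\top)^{-1}$ together with the commutativity of $M(u_n)$ and $BC(u_n)B^\top$. In both cases the strategy is to establish a one-step inequality of the form $\E[V_{n+1}\mid\cF_n] \le V_n - h\,Q_n$ with explicitly identified non-negative dissipation $Q_n$, take expectations, and telescope from $k=0$ to $n-1$.

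For the first bound, set $V_n := \tfrac{1}{J}\sum_j\|e_n^{(j)}\|^2$. Plugging the update $e_{n+1}^{(j)} = (I - hP_n B)e_n^{(j)} + \sqrt{h}\,P_n(W_{n+1}^{(j)} - \bar W_{n+1})$ into $V_{n+1}$ and using $\mathrm{Cov}(W_{n+1}^{(j)} - \bar W_{n+1}) = \tfrac{J-1}{J}I$ together with $\sum_j e_n^{(j)}(e_n^{(j)})^\top = JC(u_n)$, one obtains
\[\E[V_{n+1}\mid\cF_n] = V_n - 2h\,\mathrm{tr}(P_n BC(u_n)) + \tfrac{h^2}{J}\sum_j\|P_n Be_n^{(j)}\|^2 + h\tfrac{J-1}{J}\|P_n\|_{\HS}^2.\]
By cyclicity of the trace, $\mathrm{tr}(P_n BC(u_n)) = \mathrm{tr}(M(u_n)BC(u_n)^2 B^\top)$ and $\|P_n\|_{\HS}^2 = \mathrm{tr}(M(u_n)^2 BC(u_n)^2 B^\top)$, while $hBC(u_n)B^\top M(u_n) = I - M(u_n)$ rewrites the $h^2$-term as $h\,\mathrm{tr}((M(u_n) - M(u_n)^2)BC(u_n)^2B^\top)$. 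Since $M(u_n) - M(u_n)^2 \succeq 0$ and $BC(u_n)^2B^\top \succeq 0$, this last trace is non-negative, and collecting terms yields $\E[V_{n+1}\mid\cF_n] \le V_n - h\tfrac{J+1}{J}\|P_n\|_{\HS}^2$. Taking expectations, telescoping, and using $\E V_n \ge 0$ proves the first inequality.

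For the second bound, set $W_n := \tfrac{1}{J}\sum_j(\|Be_n^{(j)}\|^2 + \|Br_n^{(j)}\|^2)$. Since $I - hBP_n = M(u_n)$, the $B$-image updates become $Br_{n+1}^{(j)} = M(u_n)Br_n^{(j)} + \sqrt{h}\,BP_n W_{n+1}^{(j)}$ and $Be_{n+1}^{(j)} = M(u_n)Be_n^{(j)} + \sqrt{h}\,BP_n(W_{n+1}^{(j)} - \bar W_{n+1})$. Using the decomposition $\|x\|^2 - \|M(u_n)x\|^2 = h\langle x, BC(u_n)B^\top M(u_n)x\rangle + h\|C(u_n)^{1/2}B^\top M(u_n)x\|^2$ (which follows from $I - M(u_n)^2 = (I - M(u_n))(I + M(u_n))$) and the operator inequality $BC(u_n)B^\top M(u_n) \succeq M(u_n)BC(u_n)B^\top M(u_n)$ (equivalent to $I - M(u_n) \succeq 0$ since both sides commute), the quadratic decrement applied to $x = Br_n^{(j)}$ contains $\|C(u_n)^{1/2}B^\top M(u_n)Br_n^{(j)}\|^2$ twice. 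After absorbing the Gaussian variance term $h\tfrac{2J-1}{J}\|BP_n\|_{\HS}^2$ against the $Be$-contributions (which reduces to $\mathrm{tr}((M(u_n) - \tfrac{J-1}{J}M(u_n)^2)(BC(u_n)B^\top)^2) \ge 0$), one arrives at
\[W_n - \E[W_{n+1}\mid\cF_n] \ge \tfrac{2h}{J}\sum_j\|C(u_n)^{1/2}B^\top M(u_n)Br_n^{(j)}\|^2.\]
Telescoping and dividing by two produces the second inequality with the factor $1/(2J)$ on the right-hand side.

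The main obstacle is the algebraic bookkeeping for the second bound, specifically isolating the factor $2$ in front of $\|C(u_n)^{1/2}B^\top M(u_n)Br_n^{(j)}\|^2$: one copy arises directly from the decomposition of $\|x\|^2 - \|M(u_n)x\|^2$, while the second copy is released by the operator inequality $BC(u_n)B^\top M(u_n) \succeq M(u_n)BC(u_n)B^\top M(u_n)$ applied to the inner-product term. For the first bound the corresponding technicality is the identification $\tfrac{h^2}{J}\sum_j\|P_n Be_n^{(j)}\|^2 = h\,\mathrm{tr}((M(u_n) - M(u_n)^2)BC(u_n)^2B^\top)$, which is precisely what allows $2\,\mathrm{tr}(P_n BC(u_n)) - \tfrac{J-1}{J}\|P_n\|_{\HS}^2$ to collapse to $\tfrac{J+1}{J}\|P_n\|_{\HS}^2$ modulo a non-negative remainder. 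Once these identities are in place inside the proofs of the two preceding results, the corollary reduces to pure telescoping.
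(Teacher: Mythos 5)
Your proposal is correct and follows essentially the same route as the paper: the corollary is obtained by telescoping the explicit one-step dissipation identities established in the proofs of Lemma~\ref{lem:deviationsdecrease} and Proposition~\ref{prop:bound_imagespace}, dropping the remaining non-negative dissipation terms, and using non-negativity of the left-hand side. Your re-derivation of those identities via traces and the operator inequalities $M-M^2\succeq 0$, $I-M\succeq 0$ is just a repackaging of the paper's particle-sum computations (which invoke Lemma~\ref{lem:nonneg_innerprod} instead), and it correctly reproduces both the $\tfrac{J+1}{J}$ prefactor and the factor $\tfrac{1}{2J}$.
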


\begin{proof}
For the proof see Appendix~\ref{app:linear_EKI}.
\end{proof}


We emphasize that it is not true that the quantity $\frac1J\sum_{j=1}^J\E\|r_{n}^{(j)}\|^2$ is decreasing. This can be seen directly in the continuous and deterministic setting: Here it can be proven that $\frac1J\sum_{j=1}^J \|Br^{(j)}(t)\|^2$ is decreasing, but $\frac1J\sum_{j=1}^J \|r^{(j)}(t)\|^2$ does not have this property.

First, the mapping via $B$ only keeps track of the data-informed parameter dimensions, i.e. those orthogonal to the kernel of $A$. And secondly, even invertibility of $B$ still does not imply monotonicity of $\|\overline{u}(t)-u^\dagger\| $ as the mapping $B$ can warp the coordinate system in such a way that this property is lost. 
This can be seen in an elementary example unrelated to the EKI: Consider the curve $x(t) = (\cos(t),\sin(t))$ for which $V(t) := \|x(t)\|^2$ is constant, i.e. monotonously decreasing. 
On the other hand, with $B=\diag(2,1)$, the mapping $\tilde V(t) = \|Ax(t)\|^2$ is not monotonous.

As a concrete example for the non-monotonicity of the mean and the residual, we can consider the forward operator $A = \diag(100,1)$, observation noise covariance $\Gamma = I_{2\times 2}$, observation $y = (0,0)^T$, and an initial ensemble with mean $\overline{u}_0 = (100,100)^T$ and empirical covariance

\begin{align*}
C(u(0)) =  \left( \begin{array}{cc}
25 & -24\\
-24 & 25\end{array} \right),
\end{align*}

whose eigenvectors are $(-1,1)^T$ and $(1,1)^T$ with eigenvalues $49$ and $1$, respectively.

Figure \ref{fig:nonmonotonicity} shows the initial ensemble and the trajectories of the ensemble and its sample mean in the parameter space.
Clearly, the sample mean and the whole ensemble move away from their final limit $(0,0)^T$ for quite some time until they finally `change direction' and converge towards their limit. 
The initial shearing of the ensemble combined with the strong weighting of the horizontal direction, which is encoded in the forward operator, leads to an initial movement of the ensemble along its principal axis to the top left, increasing the value of $\|\overline r(t)\|^2$.
\begin{figure}[hbt]
    \centering
    {\includegraphics[width=0.6\textwidth]{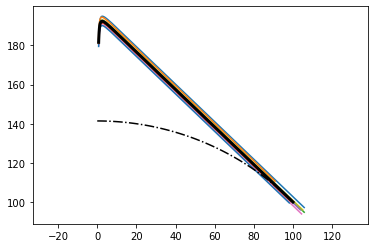}}%
    \caption{Trajectories of the EKI (starting at the lower right corner, black curve is the mean $\overline{u}(t)$) for $t\in[0,1]$. Dotted sphere is the Euclidean sphere through $\overline{u}_0$, demonstrating non-monotonicity of the mean.}
    \label{fig:nonmonotonicity}
\end{figure}

In other words, the Euclidean norm is not the natural norm with respect to which we should view the dynamics of the ensemble and we need to either settle for \textit{non-monotonous convergence} of the residuals $\|\overline{u}(t)-u^\dagger\|$ in parameter space, or we need to pick a more problem-adapted norm. In the deterministic setting, the latter can be done by diagonalizing $C(u(0))B^TB$: It can be shown that this yields a basis of eigenvectors which diagonalize $C(u(t))C(u(0))^{-1}$ for all times, see \cite{bungert2021}. In the stochastic setting, this favourable property is lost.

\subsection{Strong convergence for particle system initialized in the {orthogonal complement of the kernel}}

The key idea of the following proof is to divide the particles dynamics into the dynamics in the kernel of the forward map $B$ and its orthogonal complement. To do so, we introduce the orthogonal projection onto the orthogonal complement of the kernel
\begin{equation*}
P = B^\top(BB^\top)^-B, 
\end{equation*}
where $(BB^\top)^-$ denotes the generalized Moore-Penrose inverse of $BB^\top$. The idea is to split
\[
r_n^{(j)} = Pr_n^{(j)} + (I-P)r_n^{(j)} 
\]
and provide bounds for each term separately.  We can verify bounded second moments of the particle system for the discrete EKI iteration initialized in the image space.
\begin{lemma}\label{lem:secondmoments_imagespace}
Let $u_0 = (u_0^{(j)})_{j\in\{1,\dots,J\}}$ be $\cF_0$-measurable maps $\Omega\to\R^p$ such that $\E[\|Br_0^{(j)}\|^{2}]<\infty$, $ \E[\|(I-P)r_0^{(j)}\|^2]<\infty$ and $(I-P)e_0^{(j)} = 0$ for all $j\in\{1,\dots,J\}$.  Then there exists a uniform constant $C>0$ such that for all $j\in\{1,\dots,J\}$
\begin{equation}
\sup_{n\in\{1,\dots,T\cdot N\}} \E[\|r_n^{(j)}\|^2]\le C.
\end{equation}
\end{lemma}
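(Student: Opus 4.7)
The plan is to follow the hint and decompose $r_n^{(j)} = Pr_n^{(j)} + (I-P)r_n^{(j)}$, an orthogonal split so that $\|r_n^{(j)}\|^2 = \|Pr_n^{(j)}\|^2 + \|(I-P)r_n^{(j)}\|^2$, and bound the two components separately. The structural observation driving everything is that the particle deviations $e_n^{(j)}$ remain in $\ker(B)^\perp = \range(P)$ for every $n$ and $j$. I would prove this by induction on $n$: the base case is the assumption $(I-P)e_0^{(j)}=0$, and for the inductive step both increments in the update
\[
e_{n+1}^{(j)} = e_n^{(j)} - hC(u_n) B^\top M(u_n) B e_n^{(j)} + \sqrt{h}\, C(u_n) B^\top M(u_n)(W_{n+1}^{(j)} - \bar W_{n+1})
\]
factor through $C(u_n) = \tfrac{1}{J}\sum_k e_n^{(k)}(e_n^{(k)})^\top$, whose range is contained in $\spann\{e_n^{(k)}\} \subset \ker(B)^\perp$ by the induction hypothesis.

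Applying exactly the same reasoning to the residual update \eqref{eq:update_res}, both the drift term $-hC(u_n)B^\top M(u_n) B r_n^{(j)}$ and the diffusion term $\sqrt{h}\,C(u_n)B^\top M(u_n) W_{n+1}^{(j)}$ lie in $\spann\{e_n^{(k)}\}\subset \ker(B)^\perp$. Hence $(I-P)$ annihilates the entire increment, and $(I-P)r_n^{(j)} = (I-P)r_0^{(j)}$ for all $n$, so $\E\|(I-P)r_n^{(j)}\|^2$ is bounded by the initial-data hypothesis. For the complementary component $Pr_n^{(j)} \in \ker(B)^\perp$, I would use that the restriction $B|_{\ker(B)^\perp}$ is injective: if $\sigma_+ > 0$ denotes the smallest nonzero singular value of $B$, then $\|Bv\| \ge \sigma_+ \|v\|$ for every $v\in \ker(B)^\perp$. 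Combined with $BP = B$, this yields $\|Pr_n^{(j)}\|^2 \le \sigma_+^{-2} \|Br_n^{(j)}\|^2$. Proposition~\ref{prop:bound_imagespace} bounds $\frac1J\sum_j\E[\|Br_n^{(j)}\|^2 + \|Be_n^{(j)}\|^2]$ uniformly in $n$ by its initial value, and since each summand is non-negative each individual $\E\|Br_n^{(j)}\|^2$ is controlled by $J$ times the initial sum. Adding the two orthogonal bounds yields the claim.

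The only delicate ingredient is the subspace invariance of $e_n^{(j)}$: a priori one might worry that the Brownian perturbations $W_{n+1}^{(j)}$ inject motion into $\ker(B)$, which would destroy the frozen component $(I-P)r_n^{(j)}$ and would require a separate estimate on the uninformed directions. The point is that the particular factorization of the EKI update through $C(u_n)B^\top M(u_n)$ routes every noise contribution through the current particle span, preserving the invariant subspace $\ker(B)^\perp$. This is precisely what the hypothesis "initialized in the orthogonal complement of the kernel" is designed to exploit, and it is the only step that requires insight; the remaining bounds are elementary consequences of the previously established auxiliary results.
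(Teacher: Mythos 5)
Your proposal is correct and follows essentially the same route as the paper: the orthogonal split $r_n^{(j)} = Pr_n^{(j)} + (I-P)r_n^{(j)}$, the observation that every increment factors through $C(u_n)$ and hence through $\spann\{e_n^{(k)}\}\subset\ker(B)^\perp$ so that $(I-P)r_n^{(j)}=(I-P)r_0^{(j)}$ is frozen, and the control of $\|Pr_n^{(j)}\|$ by $\|Br_n^{(j)}\|$ via Proposition~\ref{prop:bound_imagespace}. The only cosmetic difference is that you bound $\|Pr_n^{(j)}\|$ using the smallest nonzero singular value of $B$ where the paper writes $\|Pr_n^{(j)}\|\le\|B^\top(BB^\top)^-\|_{\HS}\|Br_n^{(j)}\|$ --- the same estimate in different clothing.
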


\begin{proof}
For the proof see Appendix~\ref{app:linear_EKI}.
\end{proof}

\begin{corollary}
Let $u_0 = (u_0^{(j)})_{j\in\{1,\dots,J\}}$ be $\cF_0$-measurable maps $\Omega\to\R^p$ such that $\E[\|u_0^{(j)}\|^2]<\infty$ and $(I-P)e_0^{(j)}=0$ for all $j\in\{1,\dots,J\}$. Then we have strong convergence of the approximation error of the EKI method
\[
\lim_{h\searrow 0} \sup_{[0,T]} \mathbb{E}\|E(t)\|^\theta = 0,
\]
for any $\theta\in(0,2)$
\end{corollary}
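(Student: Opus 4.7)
The plan is to reduce the corollary directly to Theorem \ref{thm:linear_general} with exponent $p=2$. Since that theorem delivers strong convergence in $L^\theta$ for any $\theta\in(0,\min\{2,p\})$, it suffices to establish a uniform-in-$h$ second moment bound
\[
\sup_{n\in\{1,\dots,T\cdot N\}} \E\bigl[\|u_n^{(j)}\|^2\bigr] \le C
\]
for each $j\in\{1,\dots,J\}$.

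First I would verify the hypotheses of Lemma \ref{lem:secondmoments_imagespace}. Writing $r_0^{(j)} = u_0^{(j)} - \hat u$, the assumption $\E[\|u_0^{(j)}\|^2] < \infty$ together with boundedness of $B$ and $I-P$ gives both $\E[\|B r_0^{(j)}\|^2] < \infty$ and $\E[\|(I-P)r_0^{(j)}\|^2] < \infty$; the remaining hypothesis $(I-P)e_0^{(j)}=0$ is exactly the standing assumption of the corollary. Lemma \ref{lem:secondmoments_imagespace} therefore yields $\sup_{n} \E[\|r_n^{(j)}\|^2] \le C$ uniformly in the step size $h$. Since $u_n^{(j)} = r_n^{(j)} + \hat u$ and $\hat u$ is deterministic, the elementary inequality $\|u_n^{(j)}\|^2 \le 2\|r_n^{(j)}\|^2 + 2\|\hat u\|^2$ transfers this bound to $\E[\|u_n^{(j)}\|^2]$.

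With the second-moment bound in hand, I would simply invoke Theorem \ref{thm:linear_general} with $p=2$, whose conclusion is exactly the claimed strong convergence for every $\theta \in (0,2)$. The two ingredients this theorem requires — uniform second moments of the discrete iterates $u_n^{(j)}$ (just verified) and uniform second moments of the continuous interpolation $Y$ (supplied by the interpolation Lemma \ref{lem:interpolation} applied on each subinterval, as already spelled out in the proof of Theorem \ref{thm:linear_general}) — are both available.

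I do not expect any substantive obstacle here, since all the real work was done upstream in Lemma \ref{lem:secondmoments_imagespace} (which exploits the invariance of the unobserved component $(I-P)e_n^{(j)}=0$ under the EKI update to control the kernel part of $r_n^{(j)}$) and in the general machinery of Theorem \ref{thm:linear_general}. The corollary is essentially a packaging statement, observing that the natural moment assumption $\E[\|u_0^{(j)}\|^2]<\infty$ on the initial ensemble is enough to reach the full range $\theta\in(0,2)$ provided the kernel degeneracy condition $(I-P)e_0^{(j)}=0$ is imposed.
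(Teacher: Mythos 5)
Your proposal is correct and follows exactly the paper's route: the corollary is proved there as a direct consequence of Lemma~\ref{lem:secondmoments_imagespace} combined with Theorem~\ref{thm:linear_general}. The only addition you make is the (harmless and welcome) explicit check that the bound on $\E[\|r_n^{(j)}\|^2]$ transfers to $\E[\|u_n^{(j)}\|^2]$ via $u_n^{(j)} = r_n^{(j)} + \hat u$, which the paper leaves implicit.
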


\begin{proof}
Direct implication of Lemma~\ref{lem:secondmoments_imagespace} and Theorem~\ref{thm:linear_general}.
\end{proof}
\begin{remark}
We note that the assumption $(I-P)e_0^{(j)}=0$ for all $j\in\{1,\dots,J\}$ could for example be ensured, if the particle system is initialized with $u_0^{(j)} \mapsto P u_0^{(j)}$. However, we mention that through the projection $P$ 	much information about the forward map is necessary, which makes this result quite restrictive.
\end{remark}

\subsection{Strong convergence for general linear forward maps}

{In this section we consider general linear forward models $B=\R^{K\times p}$. While for the previous results we were able to derive second moments of the particle system, in the general setting we derive bounds for $\E[\|u_n\|^\theta]$, for any $\theta\in(0,1)$.
\begin{lemma}\label{lem:1_moments}
There exists a uniform constant $C>0$ such that for all $j\in\{1,\dots,J\}$
\begin{equation}
\sup_{n\in\{1,\dots,T\cdot N\}} \E[\|u_n^{(j)}\|]\le C.
\end{equation}
\end{lemma}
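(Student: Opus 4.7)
The plan is to work with the residuals $r_n^{(j)} = u_n^{(j)} - \hat u$, where $\hat u$ is any fixed preimage of $\hat y$ under $B$ (such a preimage exists because $\hat y \in \range(B)$). Since $\|u_n^{(j)}\| \le \|r_n^{(j)}\| + \|\hat u\|$ it is enough to bound $\E\|r_n^{(j)}\|$ uniformly in $n$ and $h$. Iterating the update \eqref{eq:update_res} gives the decomposition
\[
r_n^{(j)} = r_0^{(j)} \;-\; \sum_{k=0}^{n-1} h\, C(u_k) B^\top M(u_k) B r_k^{(j)} \;+\; \sqrt{h}\sum_{k=0}^{n-1} C(u_k) B^\top M(u_k) W_{k+1}^{(j)} \;=:\; r_0^{(j)} + A_n + M_n,
\]
so that by the triangle inequality it suffices to bound $\E\|A_n\|$ and $\E\|M_n\|$ uniformly in $n\in\{0,\dots,\lfloor T/h\rfloor\}$ and $h\in(0,1)$.

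For the drift term $A_n$ I would factor $C(u_k) = C(u_k)^{1/2}\cdot C(u_k)^{1/2}$ and use the operator-norm estimate $\|C(u_k)^{1/2}\|_{op}^2 \le \trace(C(u_k)) = \tfrac1J\sum_j \|e_k^{(j)}\|^2$, which in expectation is bounded uniformly in $k$ by Lemma~\ref{lem:deviationsdecrease}. A discrete Cauchy--Schwarz in the time index then yields
\[
\E\|A_n\| \le \Bigl(\sum_{k=0}^{n-1} h\, \E\trace(C(u_k))\Bigr)^{1/2} \Bigl(\sum_{k=0}^{n-1} h\, \E\|C(u_k)^{1/2} B^\top M(u_k) B r_k^{(j)}\|^2\Bigr)^{1/2},
\]
where the first factor is at most $T\cdot \E[\tfrac1J\sum_j \|e_0^{(j)}\|^2]$ by Lemma~\ref{lem:deviationsdecrease} and the second is controlled (up to a factor of $J$) by Corollary~\ref{cor:sumbound}. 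This is the step where the taming factor $M(u_k)$ is essential.

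The martingale term $M_n$ I would handle at the level of $L^2$: because $W_{k+1}^{(j)}\sim\cN(0,I)$ is independent of $\cF_k$, the summands are orthogonal in $L^2$ and
\[
\E\|M_n\|^2 = \sum_{k=0}^{n-1} h\, \E\|C(u_k) B^\top M(u_k)\|_{\HS}^2 \le C
\]
by Corollary~\ref{cor:sumbound}, so Jensen's inequality yields $\E\|M_n\| \le \sqrt{C}$. Combining the two bounds gives $\E\|r_n^{(j)}\| \le C$ uniformly in $n$ and $h$, which is the claimed bound.

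The pitfall that has to be avoided is summing $\E\|u_{k+1}^{(j)} - u_k^{(j)}\|$ directly: the contribution of the Gaussian increments is of order $\sqrt{h}$ per step, which telescopes to a bound of order $T/\sqrt{h}$ and blows up as $h\to 0$. Keeping the noise contribution assembled as a single martingale and exploiting the $L^2$ orthogonality of its increments is the key point, while the drift term has genuinely bounded $L^1$-total-variation thanks to Corollary~\ref{cor:sumbound}. This is also why the argument only yields a bound at the level $\theta=1$, consistent with the restriction $\theta\in(0,1)$ in the corresponding strong-convergence statement via Theorem~\ref{thm:EKI_general2}.
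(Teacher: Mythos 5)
Your proof is correct, and it agrees with the paper's on the overall strategy: both reduce the claim to a uniform bound on $\E\|r_n^{(j)}\|$ with $r_n^{(j)}=u_n^{(j)}-\hat u$, both control the drift contribution by splitting $C(u_k)=C(u_k)^{1/2}C(u_k)^{1/2}$, bounding $\|C(u_k)^{1/2}\|$ through the ensemble spread (Lemma~\ref{lem:deviationsdecrease}) and applying a Cauchy--Schwarz in the time index against the weight $h$ so that Corollary~\ref{cor:sumbound} can absorb the sum $\sum_k h\,\E\|C(u_k)^{1/2}B^\top M(u_k)Br_k^{(j)}\|^2$ (with the factor $J$ you correctly flag when passing from the particle average to a single $j$). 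Where you genuinely diverge is the noise term. The paper telescopes the recursion and sums the $L^1$ norms of the individual Gaussian increments, $\sum_k\|\sqrt{h}\,C(u_k)B^\top M(u_k)W_{k+1}^{(j)}\|_{L_1}$, each of size $O(\sqrt{h})$, and then invokes Jensen over the time index; since there are $N\cdot T=T/h$ summands, this step as displayed only accounts for $T$ rather than $T/h$ terms, which is precisely the $T/\sqrt{h}$ pitfall you describe. You instead keep the noise assembled as the martingale $M_n=\sqrt{h}\sum_{k<n}C(u_k)B^\top M(u_k)W_{k+1}^{(j)}$ and use the $\cF_k$-orthogonality of its increments to get $\E\|M_n\|\le\bigl(\E\|M_n\|^2\bigr)^{1/2}=\bigl(\sum_{k<n}h\,\E\|C(u_k)B^\top M(u_k)\|_{\HS}^2\bigr)^{1/2}$, which Corollary~\ref{cor:sumbound} bounds uniformly in $n$ and $h$. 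Your route is the more robust one here: it buys a bound that is manifestly uniform in $h$ without any counting of summands, and it repairs the one step in the paper's argument whose justification is delicate, at the harmless price of estimating the whole martingale in $L^2$ rather than increment by increment. The only cosmetic slip is that your ``first factor'' in the drift estimate should read $\bigl(T\,\E[\tfrac1J\sum_j\|e_0^{(j)}\|^2]\bigr)^{1/2}$, i.e.\ with the square root retained; this does not affect the conclusion, and the resulting constant carries the same $\sqrt{T}$ dependence as the paper's.
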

}
\begin{proof}
For the proof see Appendix~\ref{app:linear_EKI}.
\end{proof}

{
\begin{corollary}
Let $u_0 = (u_0^{(j)})_{j\in\{1,\dots,J\}}$ be $\cF_0$-measurable maps $\Omega\to\R^p$ such that $\E[\|u_0^{(j)}\|]<\infty$. Then we have strong convergence of the approximation error of the EKI method
\[
\lim_{h\searrow 0} \sup_{[0,T]} \mathbb{E}\|E(t)\|^\theta = 0,
\]
for any $\theta\in(0,1)$
\end{corollary}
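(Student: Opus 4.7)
The plan is to argue that this is essentially a direct application of Theorem~\ref{thm:linear_general} with $p=1$, using Lemma~\ref{lem:1_moments} to certify the uniform moment bound on the discrete iteration. First, I would invoke Lemma~\ref{lem:1_moments}, which under the assumption $\E[\|u_0^{(j)}\|]<\infty$ yields a constant $C>0$, independent of $h$, such that
\[
\sup_{n\in\{1,\dots,T\cdot N\}} \E\bigl[\|u_n^{(j)}\|\bigr] \le C
\quad\text{for every } j\in\{1,\dots,J\}.
\]
This verifies the uniform-in-$h$ hypothesis of Theorem~\ref{thm:linear_general} with exponent $p=1$.

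Next, I would supply the two continuous-side moment bounds required by Theorem~\ref{thm:linear_general}. For the continuous-time limit $u(t)$, a bound of the form $\sup_{t\in[0,T]}\E\|u(t)\|\le C$ follows by standard arguments using Lemma~\ref{lem:deviationsdecrease} in its continuous-time analogue (the ensemble spread is nonincreasing in expectation), combined with control of the ensemble mean via It\^o's formula applied to $\|\bar u_t\|$; this machinery has already been developed in \cite{DBCSPWSW2018,DL2021_b}. For the continuous interpolation $Y(t)$ of the discrete scheme, I would apply the interpolation Lemma~\ref{lem:interpolation} with exponent $p=1\in(0,2)$, where the node bounds $\E\|u_n^{(j)}\|\le C$ come directly from Lemma~\ref{lem:1_moments}, yielding $\sup_{t\in[0,T]}\E\|Y(t)\|\le C$ uniformly in $h$.

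With both the discrete and the continuous processes controlled in the first moment uniformly in $h$, Theorem~\ref{thm:linear_general} applies with $p=1$, so that the admissible range of exponents in the conclusion is $\theta\in\bigl(0,\min\{2,1\}\bigr)=(0,1)$, which is exactly the claim.

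The only mildly delicate point is the mismatch between the integrability hypothesis $\E\|u_0^{(j)}\|<\infty$ in the corollary and the slightly stronger assumption $\E\|u_0^{(j)}\|^2<\infty$ in Theorem~\ref{thm:linear_general}; I would resolve this by noting that the latter was used only to ensure the stopping-time condition $\E\varphi(\|u(\tau_{R(h),h})\|)\le C$ of Theorem~\ref{thm:EKI_general2}, which in our regime is verified with the logarithmic Lyapunov function $\varphi(x)=\ln(1+x^2)$ exactly as in the proof of Theorem~\ref{thm:EKI_main1}. Since $\E\varphi(\|u_0^{(j)}\|)\le \E\|u_0^{(j)}\|<\infty$, the first-moment assumption suffices, and no further work is required.
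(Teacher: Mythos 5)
Your proposal follows exactly the paper's route: the paper's own proof is the one-liner ``Direct implication of Lemma~\ref{lem:1_moments} and Theorem~\ref{thm:linear_general}'', i.e.\ certify the uniform discrete bound $\sup_n\E\|u_n^{(j)}\|\le C$ via Lemma~\ref{lem:1_moments} and feed it into Theorem~\ref{thm:linear_general} with $p=1$, whose proof in turn supplies the continuous-side bound from the literature and the bound on $Y$ from the interpolation Lemma~\ref{lem:interpolation}. Your main body is therefore correct and matches the intended argument.

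One caveat on your final paragraph: your claim that the second-moment hypothesis of Theorem~\ref{thm:linear_general} ``was used only to ensure the stopping-time condition'' is not accurate. The proof of Lemma~\ref{lem:1_moments} itself rests on Corollary~\ref{cor:sumbound}, Lemma~\ref{lem:deviationsdecrease} and Proposition~\ref{prop:bound_imagespace}, all of which require finiteness of $\E\|e_0^{(j)}\|^2$, $\E\|Br_0^{(j)}\|^2$ and $\E\|Be_0^{(j)}\|^2$, i.e.\ second moments of the initial ensemble, not just of $\varphi(\|u_0^{(j)}\|)$. So the reduction to a pure first-moment hypothesis is not established by your argument. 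To be fair, the paper's corollary is stated with the same weakened hypothesis and does not address this point either; the honest reading is that finite second moments of the initial ensemble are implicitly assumed throughout this section, and your attempt to ``repair'' the mismatch introduces an unjustified step rather than closing a real gap in the intended proof.
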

\begin{proof}
Direct implication of Lemma~\ref{lem:1_moments} and Theorem~\ref{thm:linear_general}.
\end{proof}

\begin{remark}
We note that the bound on $\theta<1$ is due to technical reasons and does not come from a fact that there exists no uniform bounds on the moments of the discrete time system. In particular, we expect existence of uniformly bounded moments 
\begin{equation}
\sup_{n\in\{1,\dots,T\cdot N\}} \E[\|u_n^{(j)}\|^p]\le C.
\end{equation}
up to $p=2$ and hence strong convergence up to $\theta<2$. However, for proving bounds in $L^2$ one needs to derive bounds on moments of the ensemble spread in discrete time up to power $4$, which is a challenging task in itself.
\end{remark}

}

\subsection{Strong convergence for Tikhonov regularized EKI and general linear forward maps}

Much of the theoretical based analysis for EKI is based on the viewpoint as optimization method. The analysis is based on the long time behaviour of the scheme, which is the study of the system of coupled SDEs \eqref{e:defcEKI} or the simplified ODE system suppressing the diffusion term for increasing time $T$.  In particular, the aim of EKI in the long time behavior is to solve the minimization problem
\begin{align}\label{eq:ill_posed_opti}
\min_{u}\ \frac12\|G(u)-y\|_{\Gamma}^2
\end{align}
iteratively.  For a linear forward map the motivation behind the EKI as optimization method can be seen by writing the drift term of \eqref{eq:EKI_SDEs} in a preconditioned gradient flow structure
\begin{align*} 
C^{up}(u_t) \Gamma^{-1}(y-Au_t^{(j)}) &= C(u_t)A^\top \Gamma^{-1}(y-Au_t^{(j)})\\ &= -C(u_t)\nabla_u \left(\frac12\|Au^{(j)}-y\|_{\Gamma}^2\right).
\end{align*}
Similarly, in the nonlinear setting, using a second-order approximation,  we can view the drift term of\eqref{eq:EKI_SDEs} as approximation of a preconditioned gradient flow \cite{Nikola},
\begin{align*} 
C^{up}(u_t) \Gamma^{-1}(y-G(u_t^{(j)})) &\approx C(u_t)(DG(u_t^{(j)}))^\top \Gamma^{-1}(y-G(u_t^{(j)}))\\ &= -C(u_t)\nabla_u \left(\frac12\|G(u^{(j)})-y\|_{\Gamma}^2\right).
\end{align*}
Solving the inverse problem through the optimization problem \eqref{eq:ill_posed_opti} is typically ill-posed and regularization is needed.  In \cite{doi:10.1080/00036811.2017.1386784} the authors propose a early stopping criterion based on the Morozov discrepancy principle \cite{Morozov1966OnTS} whereas in \cite{CST2020} Tikhonov regularization has been included into the scheme. We will focus on the Tikhonov regularized ensemble Kalman inversion (TEKI) and prove the strong convergence of the discrete TEKI.  While the TEKI can also be formulated for nonlinear forward maps, we we will focus on the linear setting.

The basic idea of the incorporation of Tikhonov regularization into EKI is to extend the underlying inverse problem \eqref{eq:IP} by prior information. This extension reads as follows
\begin{align*}
y &= Au + \eta,\\
0 & = u + \xi,
\end{align*}
where $\eta\sim\mathcal N(0,\Gamma)$ and $\xi\sim\mathcal N(0,\frac1\lambda C_0)$. Introducing the variables 
\[
\tilde A = \left(\begin{array}{cc}A \\ I \end{array}\right),\quad \tilde y = \left(\begin{array}{c}
y\\ 0\end{array}\right)
,\quad \tilde \eta \sim\mathcal N\left(0,\tilde\Gamma \right),\quad \tilde\Gamma=\left(\begin{array}{cc}
\Gamma & 0 \\ 0 & \frac1\lambda C_0
\end{array}\right)
\]
we can write the extended inverse problem as
\begin{align*}
\tilde y = \tilde A + \tilde\eta.
\end{align*}

For TEKI we now apply EKI to the extended inverse problem which then reads as
\begin{align*}
u_{n+1}^{(j)} =u_n^{(j)}-C(u_n)\tilde A^T(\tilde AC(u_n)\tilde A^T+h^{-1}\tilde\Gamma)^{-1}(\tilde Au_n^{(j)}-\tilde y_{n+1}^{(j)})\\
\end{align*}
with corresponding continuous time limit
\begin{equation}\label{e:defcTEKI}
\mathrm{d} u_t^{(j)} = C(u_t)\tilde A^T\tilde\Gamma^{-1}(\tilde y-\tilde Au_t^{(j)})\,\mathrm{d}t + C(u_t)\tilde A^T\tilde\Gamma^{-\frac12}\mathrm{d}W_t^{(j)},.
\end{equation}
where $W^{(j)}$ are independent Brownian motions in $\R^K\times\mathcal X$. In the long time behavior TEKI can be viewed as optimizer of the regularized objective function
\[
\Phi_R(u,y) = \frac12\|\tilde A u-\tilde y\|_{\R^K\times \mathcal X}^2 = \frac12\|Au-y\|^2 + \frac\lambda2 \|C_0^{-1/2} u\|_{\mathcal X}^2.
\]

The motivation behind this viewpoint can be seen by writing out the drift term of \eqref{e:defcTEKI} as
\begin{align*}
C(u_t)\tilde A^T\tilde\Gamma^{-1}(\tilde y-\tilde Au^{(j)})  &= C(u_t)\left(A^T\Gamma^{-1} (y-Au_t^{(j)}) - \lambda C_0^{-1} u_t^{(j)}\right)\\ &= -C(u_t)\nabla_u \left(\frac12\|A u_t^{(j)} -y\|_\Gamma^2+\frac{\lambda}2\|u_t^{(j)}\|_{C_0}^2\right).
\end{align*}
For a detailed convergence analysis of the TEKI as optimization method we refer to \cite{CST2020}. Since $\tilde A$ and $\tilde B:= \tilde\Gamma^{-1/2}\tilde A$ respectively are linear operators, we can directly apply the above presented results. In particular, we are going to apply Proposition~\ref{prop:bound_imagespace} in order to verify the strong convergence of the discrete TEKI to its continuous time formulation. We prove that the second moments of the particle system are bounded.

\begin{lemma}\label{lem:moments_TEKI}
Let $u_0 = (u_0^{(j)})_{j\in\{1,\dots,J\}}$ be $\cF_0$-measurable maps $\Omega\to\R^p$ such that $\E[\|u_0^{(j)}\|^{2}]<\infty$ for all $j\in\{1,\dots,J\}$.  Then there exists a uniform constant $C>0$ such that for all $j\in\{1,\dots,J\}$
\begin{equation}
\sup_{n\in\{1,\dots,T\cdot N\}} \E[\|u_n^{(j)}\|^2]\le C.
\end{equation}
\end{lemma}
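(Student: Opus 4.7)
The plan is to reduce Lemma~\ref{lem:moments_TEKI} to the previously established Lemma~\ref{lem:secondmoments_imagespace}, exploiting the fact that the augmented forward map in TEKI is always injective. Concretely, I would first observe that
\[
\tilde A = \begin{pmatrix} A \\ I \end{pmatrix} \colon \R^p \to \R^{K+p}
\]
has a full-rank identity block, so $\ker(\tilde A) = \{0\}$. Consequently $\tilde B := \tilde\Gamma^{-1/2}\tilde A$ is also injective, and the orthogonal projection
\[
P = \tilde B^\top (\tilde B \tilde B^\top)^- \tilde B
\]
projecting onto the orthogonal complement of $\ker(\tilde B)$ in the parameter space reduces to $P = I_p$. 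In particular $(I-P) = 0$, so the technical condition $(I-P)e_0^{(j)} = 0$ required in Lemma~\ref{lem:secondmoments_imagespace} is automatically satisfied, and likewise $(I-P)r_0^{(j)} = 0$.

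Next, following the decomposition introduced at the start of Section~\ref{sec:linear_EKI}, I would split $\tilde \Gamma^{-1/2} \tilde y = \hat{\tilde y} + \tilde{\tilde y}$ with $\hat{\tilde y} \in \range(\tilde B)$ and $\tilde{\tilde y} \in \range(\tilde B)^\perp$. By Lemma~\ref{lem:inner_prod} applied to the extended system, the orthogonal component $\tilde{\tilde y}$ does not influence the iteration, so we may write the residuals as $r_n^{(j)} = u_n^{(j)} - \hat u$, where $\hat u$ is the unique preimage satisfying $\tilde B \hat u = \hat{\tilde y}$ (uniqueness follows from injectivity of $\tilde B$). From $\E[\|u_0^{(j)}\|^2] < \infty$ and boundedness of the linear map $\tilde B$, both $\E[\|r_0^{(j)}\|^2]$ and $\E[\|\tilde B r_0^{(j)}\|^2]$ are finite.

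All hypotheses of Lemma~\ref{lem:secondmoments_imagespace} are thus met for the extended system, yielding
\[
\sup_{n\in\{1,\dots,T\cdot N\}} \E[\|r_n^{(j)}\|^2] \le C
\]
for each $j$, and since $u_n^{(j)} = r_n^{(j)} + \hat u$ with $\hat u$ deterministic and fixed, the triangle inequality immediately gives $\sup_n \E[\|u_n^{(j)}\|^2] \le C$, which is the claim.

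I do not anticipate a substantive obstacle: the content of the lemma is essentially a bookkeeping observation that TEKI automatically falls into the ``initialized in the orthogonal complement of the kernel'' regime of the previous subsection, because augmenting the forward map by the identity block removes the unobserved subspace entirely. The only point requiring mild care is verifying that Lemma~\ref{lem:inner_prod} indeed applies verbatim to the extended problem $(\tilde A, \tilde\Gamma, \tilde y)$, but this is transparent since its proof never used anything specific about the original $B$ beyond linearity.
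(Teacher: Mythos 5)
Your proposal is correct, but it routes the argument through Lemma~\ref{lem:secondmoments_imagespace}, which the paper's own proof does not use. The paper applies Proposition~\ref{prop:bound_imagespace} directly to the extended system $(\tilde B,\tilde y)$ to obtain $\sup_n \E[\|\tilde B r_n^{(j)}\|^2]\le C$, and then exploits the block structure of $\tilde B$, namely $\|\tilde B r\|_{\R^K\times\cX}^2 = \|Br\|_{\R^K}^2 + \|r\|_{\cX}^2$ (up to the $\lambda C_0$ weighting), so that the $\tilde B$-image norm of the residual dominates the full parameter-space norm and the bound on $\E[\|r_n^{(j)}\|^2]$ follows in one line. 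You instead observe that the identity block makes $\tilde B$ injective, hence $P=I_p$ and $(I-P)e_0^{(j)}=0$ holds vacuously, so Lemma~\ref{lem:secondmoments_imagespace} can be quoted wholesale for the extended system. Both arguments rest on the same engine, Proposition~\ref{prop:bound_imagespace} together with the fact that the regularization block leaves no unobserved subspace; your reduction has the conceptual merit of exhibiting TEKI as an automatic instance of the ``initialized in the orthogonal complement of the kernel'' regime, while the paper's direct route trades the generic constant $\|\tilde B^\top(\tilde B\tilde B^\top)^-\|_{\HS}^2$ from Step~1 of Lemma~\ref{lem:secondmoments_imagespace} for the cleaner lower bound coming from the identity block. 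Your closing caveat is also handled correctly: the transfer of Lemma~\ref{lem:inner_prod} and Proposition~\ref{prop:bound_imagespace} to $(\tilde A,\tilde\Gamma,\tilde y)$ is exactly what the paper asserts before stating the lemma, and your remark that $\hat u$ is in fact unique here (whereas the paper only needs existence) is accurate but not needed.
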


\begin{proof}
For the proof see Appendix~\ref{app:linear_EKI}.
\end{proof}

As we can ensure the bound on the second moments of the particle system we are ready to formulate our main result of strong convergence for the discrete TEKI iteration.
\begin{corollary}
Let $u_0 = (u_0^{(j)})_{j\in\{1,\dots,J\}}$ be $\cF_0$-measurable maps $\Omega\to\R^p$ such that $\E[\|u_0^{(j)}\|^2]<\infty$.  Then we have strong convergence of the approximation error of the TEKI method
\[
\lim_{h\searrow 0} \sup_{[0,T]}\ \mathbb{E}\|E(t)\|^\theta = 0,
\]
for any $\theta\in(0,2)$
\end{corollary}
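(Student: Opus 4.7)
The plan is to obtain the statement as a direct combination of the two main ingredients already assembled in this section: the uniform second-moment bound for the discrete TEKI iteration from Lemma~\ref{lem:moments_TEKI}, and the general strong convergence result for linear EKI in Theorem~\ref{thm:linear_general}.

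My first step is to observe that TEKI is nothing but the plain EKI scheme applied to the \emph{extended} linear inverse problem with forward operator $\tilde A$, data $\tilde y$, and noise covariance $\tilde\Gamma$, as described in the preceding paragraphs. Since $\tilde A$ is still a linear map, every construction and moment estimate established earlier in Section~\ref{sec:linear_EKI} (the residual/spread analysis, the interpolation lemma, the bound on $\sup_{t\in[0,T]}\E\|u(t)\|^p$ for the continuous dynamics) carries over verbatim with $(A,\Gamma)$ replaced by $(\tilde A,\tilde\Gamma)$. In particular, Theorem~\ref{thm:linear_general} applies directly to the TEKI iteration; what remains is to verify its hypothesis.

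Next, I would check the required discrete moment bound: $\sup_{n\in\{1,\dots,T\cdot N\}}\E[\|u_n^{(j)}\|^{p}]\le C$ with $p=2$, uniformly in $h$. This is exactly the content of Lemma~\ref{lem:moments_TEKI}, whose proof exploits the dissipative second block of $\tilde A$ (the Tikhonov penalty $\lambda C_0^{-1}u$) together with the uniform spread and residual control provided by Proposition~\ref{prop:bound_imagespace} and Corollary~\ref{cor:sumbound}. Plugging $p=2$ into Theorem~\ref{thm:linear_general} then gives $\lim_{h\searrow 0}\sup_{[0,T]}\E\|E(t)\|^\theta = 0$ for every $\theta\in(0,\min\{2,2\})=(0,2)$, which is the claimed conclusion.

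There is no serious obstacle at the level of this corollary: all the genuine work has already been done in Lemma~\ref{lem:moments_TEKI}, whose role is to upgrade the moment bound from order $\theta<1$ (obtainable for general linear forward maps without regularization, as in Lemma~\ref{lem:1_moments}) to the full order $\theta<2$ — the improvement this corollary advertises. The only point one has to be careful about is that the hypothesis $\E[\|u_0^{(j)}\|^2]<\infty$ suffices to trigger \emph{both} preconditions of Theorem~\ref{thm:linear_general}: the initial spread moment $\E[\|e_0^{(j)}\|^2]$ needed by Proposition~\ref{prop:bound_imagespace} (hence by Lemma~\ref{lem:moments_TEKI}) is controlled by $\E[\|u_0^{(j)}\|^2]$ via the triangle inequality, and the continuous-time moment bound on $u(t)$ used inside the proof of Theorem~\ref{thm:linear_general} only requires a second moment of the initial ensemble.
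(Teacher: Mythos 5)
Your proposal is correct and matches the paper's intended argument exactly: the corollary is a direct consequence of the uniform second-moment bound in Lemma~\ref{lem:moments_TEKI} combined with Theorem~\ref{thm:linear_general} applied with $p=2$, giving strong convergence for all $\theta\in(0,2)$. Your remark that the key point is the identity block of $\tilde B$ letting the observation-space bound control the full parameter norm is precisely what the proof of Lemma~\ref{lem:moments_TEKI} uses.
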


\section{Conclusion}\label{sec:conclusion}
We have shown that on finite time scales $[0,T]$, the discrete EKI dynamics can be used to approximate the continuous EKI. Or, the other way around, we have established the legitimacy of analyzing the EKI dynamics with a time-continuous model and draw conclusions about the discrete EKI dynamics implemented in practice. For the general nonlinear model, we were able to prove convergence of the discretization in probability, while for the linear setting, we were able to prove even convergence in the $L^\theta$ sense, for $\theta\in (0,1)$, with higher exponents in more favorable settings. We note that the constant derived in the proof still depends on time in the form of $\sqrt T$. Due to the fact that we were able to eliminate dependence on $T$ in the other settings considered (TEKI, convergence in probability for the nonlinear model), we believe that this can be done similarly in the linear setting as well, maybe under additional assumptions, and we leave this as a task for future work.

The methods which we have employed can be used very generally in an SDE setting and can be applied to the analysis of discretization schemes for SDEs in different contexts.

\bibliographystyle{siamplain}
\bibliography{references}

\appendix

\section{Proofs of Section~\ref{sec:general_approx}}\label{app:general_approx}

\begin{proof}[Proof of Lemma~\ref{lem:Res}]
We start by bounding the error between $Y(t)$ for $t\in[kh,(k+1)]h$ 
and $Y_k$.  By the SDE for the approximation 
\begin{align*}
\|Y(\floor{t})-Y(t)\| 
&=   \|\int_{\floor{t}}^t  f_h (Y(\floor{s}))ds + \int_{\floor{t}}^t g_h(Y(\floor{s})) dW(s)\|  \\
&\leq h B(R) + B(R) \|W(t)-W(\floor{s})\|
\end{align*}
Thus, by the Burkholder--Davis--Gundy inequality and by merging the higher-order term $h^p$ into the lower-order term $h^{p/2}$ with an appropriate constant,
\[ \mathbb{E} \sup_{[0,\tau_{R,h}]} \|Y(\floor{t})-Y(t)\|^p \leq C_p h^{p/2} B(R)^p.
\]

In order to bound the residual, we consider $t\in[0,\tau_{R,h}]$
and thus $\floor{t}\in[0,\tau_{R,h}]$ with   $\|Y(\floor{t})\|\leq R$ . 
Now
\[
\| f_h (Y(\floor{t})) - f(Y(\floor{t}))\| \leq C_a(R,h)
\]
and 
\[
\| f (Y(\floor{t})) - f(Y(t))\| \leq L(R)\|Y(\floor{t})-Y(t)\| 
\]
Thus 
\[ \mathbb{E} \sup_{[0,\tau_{R,h}]} \|\mathrm{Res}_1 (t)\|^p \leq C_p \left[C_a(R,h)+L(R)h^{1/2} B(R)\right]^p
\]
 The bound for $\mathrm{Res}_2 $ follows in a similar way.
  
\end{proof}

\begin{proof}[Proof of Lemma~\ref{lem:emom}]
Recall for the error
\[dE = [f(x)-f(x+E)]dt +  [g(x)-g(x+E)]dW + \mathrm{Res}_1 dt + \mathrm{Res}_2 dW
\]
Thus, using Ito-formula 
we obtain 
for some constant $C_\epsilon$  depending only on  $\epsilon$
\begin{align}
 d\|E\|^2 &= 2 \langle E, dE \rangle + \langle dE, dE \rangle\nonumber\\
 &= 2 \langle E, [f(x)-f(x+E)] + \mathrm{Res}_1 \rangle dt + 2 \langle E , [g(x)-g(x+E) + \mathrm{Res}_2] dW \rangle \nonumber\\
 &\quad + \|[g(x)-g(x+E)] +\mathrm{Res}_2 \|_{\HS}^2  dt \nonumber\\ 
 &\leq
 2 \langle E, [f(x)-f(x+E)] \rangle dt + (1+\epsilon)\|g(x)-g(x+E)\|_{\HS}^2 dt + \epsilon \|E\|^2  dt \nonumber\\
 &\quad + \left[ C_\epsilon \|\mathrm{Res}_2 \|_{\HS}^2  +  C_\epsilon \| \mathrm{Res}_1\|^2 \right] dt + 2 \langle E , [g(x)-g(x+E) + \mathrm{Res}_2 ]dW \rangle \nonumber\\
 &\leq 
  \left[ \delta(R) \|E\|^2 +   C_\epsilon \|\mathrm{Res}_2 \|_{\HS}^2  +  C_\epsilon\| \mathrm{Res}_1\|^2 \right] dt 
  \nonumber\\&\quad + 2 \langle E , [g(x)-g(x+E)+ \mathrm{Res}_2] dW \rangle \label{e:bounde}
\end{align}

This yields from Lemma \ref{lem:Res} using the martingale property of the stopped integrals 
\begin{align*}
\mathbb{E}\|E(t \wedge\tau_{R,h})\|^2 
&\leq  \|E(0)\|^2 + \delta(R) \mathbb{E}\int_0^{t\wedge \tau_{R,h}}\|E\|^2 dt 
+   C_\epsilon T K(R,h)^2 
  \\&\leq  \|E(0)\|^2 + \delta(R) \mathbb{E}\int_0^t\|E(s \wedge\tau_{R,h}) \|^2 dt 
+ C  K(R,h)^2
\end{align*}
where the constant depends on $T$ and the choice of $\epsilon$. Assume first that $\delta(R)>0$. 
Using Gronwall's lemma and $E(0)=0$ we obtain the bound 
\[ \mathbb{E}\|E(t \wedge\tau_{R,h})\|^2 
\leq  
C \int_0^t e^{ \delta(R) s } ds  K(R,h)^2
\]

Assume now that $\delta(R)\leq0$.
This yields from \eqref{e:bounde} using martingale property of the stopped integrals 
\[
\mathbb{E}\|E(t \wedge\tau_{R,h})\|^2 + |\delta(R)| \mathbb{E}\int_0^{t\wedge \tau_{R,h}}\|E\|^2 dt 
\leq  
  C K(R,h)^2.
\]
\end{proof}

\begin{proof}[Proof of Lemma~\ref{lem:uniform_mom}]
Recall from \eqref{e:bounde} for $t\leq \tau_{R,h}$
\begin{align*}
 \|E(t)\|^2
 &\leq
   \int_0^t  \left[\delta(R) \|E\|^2 +   C  \|\mathrm{Res}_2 \|_{\HS}^2+  C  \| \mathrm{Res}_1\|^2 \right] dt 
  \\& + 2   \int_0^t \langle E , [g(x)-g(x+E) + \mathrm{Res}_2 ] dW \rangle
\end{align*}
Thus, using Burkholder-Davis-Gundy (recall $\tau_{R,h}\in[0,T]$) assuming $\delta(R)>0$ 
\begin{align*}
 \mathbb{E} \sup_{[0,\tau_{R,h}]}\|E\|^2
 &\leq 
    \mathbb{E}  \int_0^{\tau_{R,h}}  \left[\delta(R) \|E\|^2 +   C  \|\mathrm{Res}_2 \|_{\HS}^2+  C  \| \mathrm{Res}_1\|^2 \right] ds
  \\&\quad + 2   \mathbb{E} \Big(  \int_0^{\tau_{R,h}} \left[L(R)^2\|E\|^4  +  \|E\|^2\|\mathrm{Res}_2\|_{\HS}^2 \right] dt  \Big)^{1/2}
    \\ &\leq 
    \delta(R) \int_0^{T}   \mathbb{E}  \|E(s\wedge\tau_{R,h})\|^2 ds \\    
    &\quad+   C  \mathbb{E} \sup_{[0,\tau_{R,h}]} \|\mathrm{Res}_2 \|_{\HS}^2
    +  C  \mathbb{E} \sup_{[0,\tau_{R,h}]} \| \mathrm{Res}_1\|^2    
  \\& + C  \Big(  (L(R)^2+1) \int_0^T  \mathbb{E}\|E(s\wedge\tau_{R,h})\|^4 ds    
  + \mathbb{E} \sup_{[0,\tau_{R,h}]} \|\mathrm{Res}_2\|_{\HS}^4  \Big)^{1/2}
\end{align*}
Using Lemma \ref{lem:Res} we obtain 
\begin{align*}
  \mathbb{E} \sup_{[0,\tau_{R,h}]}\|E\|^2 
  &\leq   \delta(R) \int_0^{T}   \mathbb{E}  \|E(s\wedge\tau_{R,h})\|^2 ds \\
  &\quad + C   (L(R)^2+1) \Big( \int_0^T  \mathbb{E}\|E(s\wedge\tau_{R,h})\|^4 ds \Big)^{1/2}
   + C K(R,h)^2.
\end{align*}
Moreover in the case $\delta(R)\leq0$ we have similarly 
\[
  \mathbb{E} \sup_{[0,\tau_{R,h}]}\|E\|^2 
  \leq  C   (L(R)^2+1) \Big(\int_0^T  \mathbb{E}\|E(s\wedge\tau_{R,h})\|^4 ds \Big)^{1/2}
  + C K(R,h)^2.
\] 
We obtain the assertion by using Lemma \ref{lem:emom} and Lemma \ref{lem:emom4}.

\end{proof}

\section{Proofs of Section~\ref{sec:general_EKI}}\label{app:general_EKI}

\begin{lemma}[An interpolation lemma for second moments]\label{lem:interpolation_quad}
Let $u(t) = u_0 + t\cdot f(u_0) + g(u_0)W_t$ with $u_0$, $W_t$ independent. Assume further that $\E \|u_0\|^2 < C$ and $\E \|u(1)\|^2 < C$, then $\E \|u(t)\|^2 < C$ for all $t\in [0,1]$.
\end{lemma}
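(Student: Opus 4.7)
My plan is to reduce the statement to the convexity of a single-variable quadratic. First I would expand the square directly:
\[
\|u(t)\|^2 = \|u_0\|^2 + 2t\langle u_0, f(u_0)\rangle + t^2\|f(u_0)\|^2 + 2\bigl\langle u_0 + tf(u_0),\,g(u_0)W_t\bigr\rangle + \|g(u_0)W_t\|^2,
\]
and then take expectations. Because $W_t$ is independent of $u_0$ with $\E W_t = 0$ and $\E[W_t W_t^\top]=t I$, the two cross terms linear in $W_t$ vanish and $\E\|g(u_0)W_t\|^2 = t\,\E\|g(u_0)\|_{\HS}^2$. This gives the key identity
\[
\E\|u(t)\|^2 = \E\|u_0\|^2 + t\bigl(\E\|g(u_0)\|_{\HS}^2 + 2\,\E\langle u_0, f(u_0)\rangle\bigr) + t^2\,\E\|f(u_0)\|^2,
\]
which is a quadratic polynomial in $t$ with \emph{non-negative} leading coefficient $\E\|f(u_0)\|^2 \ge 0$.

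Before invoking convexity, I need to check that every expectation above is finite. Applying the same independence computation at $t=1$ gives
\[
\E\|u(1)\|^2 = \E\|u_0 + f(u_0)\|^2 + \E\|g(u_0)\|_{\HS}^2,
\]
so both summands are bounded by $C$; combining $\E\|u_0+f(u_0)\|^2 \le C$ with the assumption $\E\|u_0\|^2 \le C$ and the triangle inequality yields $\E\|f(u_0)\|^2 \le 4C$, and then $\E|\langle u_0, f(u_0)\rangle|$ is controlled by Cauchy--Schwarz. So the quadratic identity is rigorously justified.

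Once the identity is in hand, the conclusion is immediate: the map $t \mapsto \E\|u(t)\|^2$ is a convex function of $t\in\R$, hence on $[0,1]$ it is dominated by the chord through its endpoints, giving
\[
\E\|u(t)\|^2 \;\le\; (1-t)\,\E\|u(0)\|^2 + t\,\E\|u(1)\|^2 \;<\; (1-t)C + tC \;=\; C.
\]
I do not anticipate any real obstacle here; the only subtle point is tracking the cross terms carefully in the expansion and verifying the finiteness of the moments that appear, both of which follow directly from the two hypothesized bounds. Note that the proof actually uses only that $u(t)$ has the precise affine-in-$t$ structure together with a zero-mean, variance-$tI$ Gaussian contribution — so the conclusion $\E\|u(t)\|^2 \le \max\{\E\|u(0)\|^2,\E\|u(1)\|^2\}$ is inherent to this structural form.
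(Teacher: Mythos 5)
Your proof is correct and rests on the same mechanism as the paper's: both arguments reduce to showing $\E\|u(t)\|^2 \le (1-t)\,\E\|u(0)\|^2 + t\,\E\|u(1)\|^2$ after the cross terms vanish by independence and $\E\|g(u_0)W_t\|^2 = t\,\E\|g(u_0)\|_{\HS}^2$. The paper obtains the chord bound by writing $u_0+tf(u_0)=(1-t)u_0+t(u_0+f(u_0))$ and applying Jensen pointwise, while you observe that $t\mapsto\E\|u(t)\|^2$ is a quadratic with non-negative leading coefficient and hence convex --- a purely presentational difference.
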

\begin{proof}
Note first that by independence, $\E[h(u_0)W_t] = 0$ and 
\[
\E[h(u_0)^2W_t^2] = \E[h(u_0)]^2 \E[W_t]^2 
= \E[h(u_0)]^2t
\]
for (suitably integrable) functions $h$.  We compute first 
\begin{align*}
\E [u(1)]^2 &= \E[u_0 + f(u_0) + g(u_0)W_1]^2 = \E[u_0 + f(u_0)]^2 + 2\cdot 0 + \E[g(u_0)]^2\cdot 1
\end{align*}

Thus,
\begin{align*}
\E [u_0 + f(u_0)t + g(u_0)W_t]^2 &=  \E[u_0 + f(u_0)t]^2 
+ 2\cdot 0 + \E [g(u_0)W_t]^2 \\
&= \E[(1-t)u_0 + t(u_0+g(u_0))]^2 + \E[g(u_0)]^2 t.
\end{align*}
Now we note that $((1-t)a+tb)^2 \leq (1-t)a^2 + t(a+b)^2$ by Jensen's inequality 
\begin{align*}
\E [u_0 + f(u_0)t + g(u_0)W_t]^2 &\leq (1-t)\E u_0^2 + t\E[u_0+f(u_0)]^2 + t\E[g(u_0)]^2\\
&= (1-t)\E u_0^2 + t\E[u(1)]^2
\end{align*}
from which the statement follows.

\end{proof}
We will need the following fundamental lemmata.

\begin{lemma}
Let $W\sim N(0,\sigma^2)$ be a centered Gaussian random variable. Then $\E |W|^p = C_p\cdot \left(\E|W|^2\right)^{\frac{p}{2}}$.
\begin{proof}Substituting $x = y\sigma$, we can compute
\begin{align*}
\E|W|^p &= \frac{1}{\sqrt{2\pi}\sigma}\int_{-\infty}^\infty |x|^p e^{-\frac{x^2}{2\sigma^2}}\d x = \sigma^p\frac{1}{\sqrt{2\pi}}\int_{-\infty}^\infty |y|^p e^{-\frac{y^2}{2}}\d y = \left(\E|W|^2\right)^{\frac{p}{2}} \cdot C_p.
\end{align*}
\end{proof}
\end{lemma}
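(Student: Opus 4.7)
The statement asserts that for a centered Gaussian the absolute $p$-th moment scales as $\sigma^p$, with a universal (dimension-free) constant $C_p$ depending only on $p$. The natural way to prove this is by a direct scaling/substitution argument: writing $W = \sigma Z$ with $Z \sim \mathcal N(0,1)$ reduces everything to an integral against the standard Gaussian density.

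My plan is as follows. First, I would express the moment as an integral: $\E|W|^p = \frac{1}{\sqrt{2\pi}\,\sigma}\int_{-\infty}^\infty |x|^p e^{-x^2/(2\sigma^2)}\,\d x$. Then I would perform the change of variables $x = \sigma y$, so that $\d x = \sigma\,\d y$, the Gaussian density becomes $\tfrac{1}{\sqrt{2\pi}}e^{-y^2/2}$, and the $|x|^p$ factor produces a factor of $\sigma^p$. This gives $\E|W|^p = \sigma^p \cdot C_p$, where $C_p := \frac{1}{\sqrt{2\pi}}\int_{-\infty}^\infty |y|^p e^{-y^2/2}\,\d y$ is a finite constant depending only on $p$ (finiteness follows from the super-exponential decay of the Gaussian density, which dominates any polynomial in $|y|$).

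Finally, I would identify $\sigma^p$ in terms of the second moment. Since $\E|W|^2 = \sigma^2$ for a centered Gaussian, we have $\sigma^p = (\E|W|^2)^{p/2}$, and substituting yields $\E|W|^p = C_p \cdot (\E|W|^2)^{p/2}$ as claimed.

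There is essentially no obstacle here: the only thing one could flag is ensuring that $C_p < \infty$ for all admissible $p$ (which is immediate by Gaussian tail decay) and noting that the argument is valid also when $p$ is not an integer, since $|y|^p$ is well-defined and integrable against $e^{-y^2/2}$ for every real $p > -1$. The scaling step is the heart of the proof and is completely routine.
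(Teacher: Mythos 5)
Your proposal is correct and follows exactly the same route as the paper's proof: the substitution $x=\sigma y$ extracts the factor $\sigma^p=(\E|W|^2)^{p/2}$ and leaves the standard Gaussian absolute moment as the constant $C_p$. The additional remarks on finiteness of $C_p$ and validity for non-integer $p$ are fine but do not change the argument.
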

For non-centered Gaussian random variables we can show
\begin{lemma}\label{lem:aux_gauss}
Let $Z\sim N(a,\sigma^2)$ be a Gaussian random variable. Then there is a constant $C_p > 0$ such that 
\[ \left(\E|Z|^2\right)^\frac{1}{2} \leq C_p \left(\E|Z|^p\right)^\frac{1}{p}.\]
\begin{proof}
We can assume $\sigma = 1$ by rescaling and set $Z = a + W$ with $W\sim N(0,1)$. Now we consider
\begin{align*}
\frac{\left(\E|Z|^p\right)^\frac{1}{p}}{\left(\E|Z|^2\right)^\frac{1}{2}} &=\frac{\left(\E|a+W|^p\right)^\frac{1}{p}}{\left(\E|a+W|^2\right)^\frac{1}{2}} =: f_p(a)
\end{align*}
as a function of $a$. If we can show that $\inf_a f_p(a) > 0$, then the statement follows with $C_p = \left(\inf_a f_p(a)\right)^{-1}$. Evidently $f_p(a) > 0$ for all $a\in \R$, also $f_p(-a) = f_p(a)$, and $f_p$ is a continuous map. Thus, if we can show that $\lim_{a\to\infty} f_p(a) > 0$, then $\inf_a f_p(a) > 0$. We start by noting that $\E|a+W|^2 = a^2+1$. Then
\begin{align*}
\lim_{a\to\infty} (f_p(a))^p &= \lim_{a\to\infty} \E\left|\frac{a+W}{\sqrt{1+a^2}}\right|^p =  \lim_{a\to\infty} \E\left|\frac{a+W}{a}\right|^p \left|\frac{a}{\sqrt{1+a^2}}\right|^p \\
&= \lim_{a\to \infty} \E\left|1 + a^{-1}W\right|^p \geq \lim_{a\to \infty} (1-\epsilon)^{-p}\cdot \mathbb P\left(|1+a^{-1}W|\geq 1-\epsilon\right)\\
&= (1-\epsilon)^{-p},
\end{align*}
where we used Chebyshev's inequality and
\begin{align*}
\mathbb P\left(|1+a^{-1}W|\geq 1-\epsilon\right) \geq \mathbb P\left(1+a^{-1}W\geq 1- \epsilon\right) \geq \mathbb P\left(W\geq -a\epsilon\right).
\end{align*}
As $\lim_{a\to\infty} (f_p(a))^p \geq \sup_{\epsilon \in (0,1)} (1-\epsilon)^{-p} = 1 > 0$, we have shown the statement.
\end{proof}
\end{lemma}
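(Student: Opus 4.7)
My plan is to reduce, by rescaling with $\sigma$, to the case $\sigma=1$ and $Z = a + W$ with $W\sim N(0,1)$, and then study the scale-invariant ratio
\[
f_p(a) := \frac{(\E|a+W|^p)^{1/p}}{(\E|a+W|^2)^{1/2}}.
\]
If I can show $c_p := \inf_{a\in\R} f_p(a) > 0$, then choosing $C_p = c_p^{-1}$ gives the desired inequality directly from the definition of $f_p$.

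The function $f_p$ is continuous on $\R$, even in $a$ (since $|a+W|$ and $|-a+W|$ have the same distribution by symmetry of $W$), and strictly positive at every finite $a$: the denominator equals $\sqrt{a^2+1}>0$, and the numerator is the $L^p$ norm of a nondegenerate Gaussian, which is strictly positive and finite. Hence on any compact interval $[-M,M]$ the infimum of $f_p$ is attained and is strictly positive. Consequently, the only way $c_p$ could vanish would be through the limit $|a|\to\infty$, and it suffices to show that $\liminf_{|a|\to\infty} f_p(a) > 0$.

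To handle the limit I would factor $|a|$ out of the moments: for any $q>0$,
\[
\E|a+W|^q = |a|^q\, \E\bigl|1 + W/a\bigr|^q,
\]
so
\[
f_p(a) = \frac{|a|\,(\E|1+W/a|^p)^{1/p}}{\sqrt{a^2+1}} \;=\; \frac{|a|}{\sqrt{a^2+1}}\cdot\bigl(\E|1+W/a|^p\bigr)^{1/p}.
\]
As $|a|\to\infty$ the prefactor tends to $1$, and for the remaining expectation I would invoke dominated convergence: on $\{|a|\ge 1\}$ the integrand $|1+W/a|^p$ is dominated pointwise by the integrable envelope $(1+|W|)^p$, and it converges pointwise to $1$, so $\E|1+W/a|^p \to 1$. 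Thus $f_p(a)\to 1$ at infinity, which together with the compact-set analysis above yields $c_p > 0$.

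The main technical care is in the dominated-convergence step, but the very strong integrability of the Gaussian density makes this essentially free; no sharp constants are needed, only positivity of the infimum. The whole proof is a compactness-plus-asymptotics argument and does not rely on any property specific to Gaussians beyond continuity of $a\mapsto f_p(a)$, finiteness of all moments, and concentration of $Z$ near $a$ for large $|a|$.
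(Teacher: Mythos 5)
Your proof is correct and follows essentially the same route as the paper: the same reduction to $\sigma=1$, the same scale-invariant ratio $f_p(a)$, and the same strategy of combining positivity and continuity on compact sets with control of the limit $|a|\to\infty$. The only divergence is in that final limit: you compute $\E|1+W/a|^p\to 1$ exactly by dominated convergence with the envelope $(1+|W|)^p$, whereas the paper only establishes the lower bound $\liminf_{a\to\infty}(f_p(a))^p\ge 1$ via a Markov-type estimate $\E|X|^p\ge t^p\,\PP(|X|\ge t)$ applied with $t=1-\epsilon$. Your variant is arguably cleaner -- it yields the exact limit and sidesteps the $\epsilon$-bookkeeping -- while the paper's bound uses only a tail estimate rather than an integrable dominating function; both suffice for the positivity of $\inf_a f_p(a)$ that the lemma requires.
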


\begin{lemma} \label{lem:aux_gauss2}
Let $W\sim N(0,1)$. Then for $a,b\in\R$, $p\in(0,2)$ and $t\in(0,1)$, we have
\[ \E|a+tb+\sqrt{t}cW|^p \leq C_p\cdot\left[|a|^p + \E|a+b+cW|^p\right].\]
\begin{proof}
We note that for for random variables $\E|X|^p\leq \left(\E|X|^2\right)^{\frac{p}{2}}$ by Hölder's inequality. Also, $|w+z|^{\frac{p}{2}} \leq |w|^{\frac{p}{2}} + |z|^{\frac{p}{2}}$. Thus, using lemma \ref{lem:interpolation_quad},
\begin{align*}
\E|a+tb+\sqrt{t}cW|^p &\leq \left(\E|a+tb+\sqrt{t}cW|^2 \right)^{\frac{p}{2}}\\
&\leq \left( a^2 + \E |a+b+cW|^2\right)^{\frac{p}{2}}\\
&\leq |a|^p + \left( \E |a+b+cW|^2\right)^{\frac{p}{2}}\\
&\leq |a|^p + C_p \E |a+b+cW|^p
\end{align*}
with the last step being due to lemma \ref{lem:aux_gauss}.
\end{proof} 
\end{lemma}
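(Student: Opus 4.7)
The plan is to chain together the three auxiliary lemmas already established (Lemma \ref{lem:interpolation_quad}, the centered-Gaussian moment scaling, and Lemma \ref{lem:aux_gauss}) to reduce the $L^p$ quantity on the left to an $L^2$ quantity amenable to the interpolation bound, then lift back to $L^p$.

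First I would exploit that $p\in(0,2)$ so that $p/2\in(0,1)$, which gives us two useful sub-linearity facts: Jensen's inequality yields $\E|X|^p\le(\E|X|^2)^{p/2}$ for any random variable $X$, and the map $x\mapsto x^{p/2}$ on $[0,\infty)$ is subadditive, i.e.\ $(w+z)^{p/2}\le w^{p/2}+z^{p/2}$. The first fact is used to demote the $L^p$ norm on the left to the more tractable $L^2$ norm; the second is needed afterwards to split the two contributions that will appear from the interpolation lemma.

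Next I would recognize that $u(t):=a+tb+\sqrt{t}cW$ fits exactly the template of Lemma \ref{lem:interpolation_quad} with deterministic initial value $u_0=a$, drift $f\equiv b$, diffusion $g\equiv c$, and Brownian increment $W_t\stackrel{d}{=}\sqrt{t}W$. Since $u_0$ is deterministic (hence trivially independent of $W$) and all $L^2$ moments are finite, the lemma delivers
\[
\E|a+tb+\sqrt{t}cW|^2 \;\le\; a^2 \;+\; \E|a+b+cW|^2.
\]
Raising both sides to the power $p/2$ and using subadditivity splits the right-hand side into $|a|^p$ plus $(\E|a+b+cW|^2)^{p/2}$.

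Finally I would handle the surviving $L^2$ moment of the Gaussian $Z:=a+b+cW\sim\cN(a+b,c^2)$ by invoking Lemma \ref{lem:aux_gauss}, which exactly reverses the direction of Hölder for Gaussians: $(\E|Z|^2)^{1/2}\le C_p(\E|Z|^p)^{1/p}$, equivalently $(\E|Z|^2)^{p/2}\le C_p^p\,\E|Z|^p$. Combining gives the claim with a constant depending only on $p$.

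The only subtle point, and what I would flag as the main obstacle, is verifying that the constant furnished by Lemma \ref{lem:aux_gauss} is genuinely universal in the Gaussian parameters $(a+b,c)$: the proof of that lemma rescales to variance $1$ and then shows $f_p(a)=(\E|a+W|^p)^{1/p}/(\E|a+W|^2)^{1/2}$ is bounded below by a positive constant uniformly in the mean via a tail-probability argument, so the combined constant is indeed independent of $a,b,c$ and we obtain a clean $p$-dependent bound, as required.
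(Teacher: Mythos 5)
Your proposal is correct and follows essentially the same route as the paper's proof: Jensen's inequality to pass from $L^p$ to $L^2$, Lemma~\ref{lem:interpolation_quad} for the interpolation bound on the second moment, subadditivity of $x\mapsto x^{p/2}$ to split off the $|a|^p$ term, and Lemma~\ref{lem:aux_gauss} to return to the $p$-th moment. Your additional remark on the uniformity of the constant in the parameters of the Gaussian is a sensible observation but does not change the argument.
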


 \begin{proof}[Proof of lemma \ref{lem:interpolation}]
 We first consider the case where all stochastic processes and random variables involved are one-dimensional. Then the statement is a consequence of lemma \ref{lem:aux_gauss2} after seeing that
 \begin{align*}
  \E|u_0 + tf(u_0) + g(u_0)W_t|^p = \E\left[\E\left[|u_0 + tf(u_0) + g(u_0)W_t|^p | \mathcal F_0\right]\right]
 \end{align*}
 and identifying $a = u_0$, $b = f(u_0)$, $\sqrt{t}cW = g(u_0)W_t$ (where we can use $\sqrt{t}W = W_t$ in distribution for $W\sim N(0,1)$). The higher-dimensional case then follows from the one-dimensional considerations by seeing that for a random vector $Z$, 
 \begin{align*}
 \E \|Z\|^p &= \E\left(\sum_{i=1}^d |z_i|^2\right)^\frac{p}{2} \simeq \left(\sum_{i=1}^d \E|z_i|^p\right),\\ 
 \left(\E \|Z\|^2\right)^\frac{p}{2} &= \left(\E\sum_{i=1}^d |z_i|^2\right)^\frac{p}{2} \simeq \sum_{i=1}^d \E\left( |z_i|^2\right)^\frac{p}{2},
 \end{align*}
 where $x\simeq y$ means that there exist constants $a,A > 0$ such that $a x \leq y \leq A x$.
 \end{proof}

\begin{proof}[Proof of Theorem~\ref{thm:EKI_main1}]

Recall that by Theorem \ref{thm:EKI_general1},
 we just have to verify
that
there exists $\varphi$ (monotone growing) such that 
\[
\mathbb{E}\varphi(\|u(\tau_{R,h})\|) \leq C.
\]
We first introduce the short-hand notation 
\[
\mathcal{F}(u)= C^{up}(u)\Gamma^{-1/2}
\]
and rewrite 
\[
du^{(j)}=-\mathcal{F}(u) \Gamma^{-1/2}(G(u^{(j)})-y) dt + \mathcal{F}(u)dW.
\]
Denote by $\overline{u}$, $\overline{W}$ and  $\overline{G}$
the mean values of $u^{(j)}$, $W^{(j)}$, and $G(u^{(j)})$
with respect to $j$.
Thus, 
\[
d\overline{u}=-\mathcal{F}(u) \Gamma^{-1/2}(\overline{G}-y) dt + \mathcal{F}(u)d\overline{W}
\]
and
\[
d(u^{(j)}-\overline{u}) 
= -\mathcal{F}(u) \Gamma^{-1/2}(G(u^{(j)}) - \overline{G}) dt + \mathcal{F}(u)d(W^{(j)}-\overline{W}).
\]
By It\^o-formula we obtain
\begin{align*}
 d\|u^{(j)}-\overline{u}\|^2 
 &= 2 \langle u^{(j)}-\overline{u} , d(u^{(j)}-\overline{u}) \rangle
 +\langle d(u^{(j)}-\overline{u}) , d(u^{(j)}-\overline{u}) \rangle\\
 &= -2  \langle u^{(j)}-\overline{u} , \mathcal{F}(u) \Gamma^{-1/2}(G(u^{(j)}) - \overline{G}) \rangle dt
\\ &\quad
 +2  \langle u^{(j)}-\overline{u} , \mathcal{F}(u)d(W^{(j)}-\overline{W}) \rangle \\&\quad
 + \langle \mathcal{F}(u)d(W^{(j)}-\overline{W}) ,  \mathcal{F}(u)d(W^{(j)}-\overline{W})\rangle
\end{align*}
Now we use that
\begin{align*}
 \frac1J\sum_j\langle u^{(j)}-\overline{u} ,& \mathcal{F}(u) \Gamma^{-1/2}(G(u^{(j)}) - \overline{G}) \rangle
 \\ &={\frac1J\sum_j\trace(\mathcal{F}(u) \Gamma^{-1/2}(G(u^{(j)}) - \overline{G})(u^{(j)}-\overline{u})^\top)}\\
 &= {\trace(F(u)F(u)^\top)}= \| \mathcal{F}(u) \|^2_{\HS} 
\end{align*}
and 
\[
 \langle \mathcal{F}(u)d(W^{(j)}-\overline{W}) ,  \mathcal{F}(u)d(W^{(j)}-\overline{W})\rangle = 2(1-\frac1J) \| \mathcal{F}(u) \|^2_{\HS} dt
\]
to obtain 
\begin{equation*}
 d \frac1J\sum_j \|u^{(j)}-\overline{u}\|^2 
 =-\frac2J \| \mathcal{F}(u) \|^2_{\HS} dt
 +2  \frac1J\sum_j\langle u^{(j)}-\overline{u} , \mathcal{F}(u)d(W^{(j)}-\overline{W}) \rangle.
\end{equation*}
The martingale term vanishes in expectation if we intergrate up to stopping times such that $u$ remains bounded. Thus, we obtain the first main result of this proof.

For all $t\in[0,T]$, $R>1$ and $h\in(0,1)$ we have 
\begin{align}\label{e:mainbound}
{\mathbb{E} \frac1J\sum_j \|u^{(j)}-\overline{u}\|^2(t\wedge \tau_{R,h}) 
+ \frac2J \int_0^{t\wedge \tau_{R,h}} \| \mathcal{F}(u) \|^2_{\HS} ds}
\leq 
\mathbb{E} \frac1J\sum_j \|u^{(j)}-\overline{u}\|^2(0)
\end{align}
In this result we did not use any particular property of $G$. 
It remains to bound $\overline{u}$ now, which is the crucial point that leads to restrictions.
First by Ito-formula
\begin{align*}
 d\|\overline{u}\|^2 
 &= 2 \langle \overline{u} , d\overline{u} \rangle
 +\langle d\overline{u} , d\overline{u} \rangle\\
 &= 2  \langle \overline{u} , \mathcal{F}(u) \Gamma^{-1/2}( \overline{G}-y) \rangle dt
 +  (2-\frac2J)\| \mathcal{F}(u) \|^2_{\HS} dt
 \rangle \\&\quad
 +  \langle \overline{u} , \mathcal{F}(u)d(W^{(j)}-\overline{W})
\end{align*}
Here, we cannot use cancellations as in the step before. Therefore, we define for $z\geq0$ the function 
\[
\varphi(z)= \ln(1+z) 
\quad\mathrm{with}\quad 
z\varphi'(z) \leq 1  
\quad\mathrm{and}\quad 
z^2\varphi''(z) \leq 1  .
\]
Again using Ito-formula, we have 
\begin{align}
 d\varphi(\|\overline{u}\|^2) 
 &= \varphi'(\|\overline{u}\|^2) d \|\overline{u}\|^2 
 +  \varphi''(\|\overline{u}\|^2) d \|\overline{u}\|^2d \|\overline{u}\|^2 \nonumber\\
 &=
 2\varphi'(\|\overline{u}\|^2)\langle \overline{u} , \mathcal{F}(u) \Gamma^{-1/2} \overline{G} \rangle dt 
 \label{e:t1}\\&\quad
  -2\varphi'(\|\overline{u}\|^2)\langle \overline{u} , \mathcal{F}(u) \Gamma^{-1/2}y \rangle dt 
  \label{e:t2}\\&\quad
 +  (2-\frac2J)\varphi'(\|\overline{u}\|^2)\| \mathcal{F}(u) \|^2_{\HS} dt
  \label{e:t3}\\&\quad +
 \varphi'(\|\overline{u}\|^2)\langle \overline{u} , \mathcal{F}(u)d(W^{(j)}-\overline{W})
  \label{e:t4}\\&\quad + (2-\frac2J) \varphi''(\|\overline{u}\|^2)
 \langle \overline{u} , \mathcal{F}(u)\mathcal{F}(u)^T\overline{u}\rangle.
  \label{e:t5}
\end{align}
Now we have to bound all terms separately.
The martingale term in (\ref{e:t4}) vanishes in expectation,
if we integrate up to $t\wedge \tau_{R,h}$.
Now
\[
(\ref{e:t3}) \leq  
C \| \mathcal{F}(u) \|^2_{\HS} dt
\]
which is integrated up to $t\wedge \tau_{R,h}$ in expectation bounded by (\ref{e:mainbound}).
We bound similarly
\[
(\ref{e:t5}) \leq  
 (2-\frac2J) \varphi''(\|\overline{u}\|^2) \|\overline{u}\|^2
 \| \mathcal{F}(u) \|^2_{\HS} dt  \leq 
 C \| \mathcal{F}(u) \|^2_{\HS} dt
\]
and
\[
(\ref{e:t2}) \leq  
 2 \varphi''(\|\overline{u}\|^2) \|\overline{u}\| \|y\|\|\mathcal{F}(u)\|_{\HS} \|\Gamma^{-1/2}\|_{\HS}dt  \leq 
 C (1+\| \mathcal{F}(u) \|^2_{\HS}) dt.
\]
The crucial term is (\ref{e:t1}). Here, we have 
\[
(\ref{e:t1}) 
\leq  2 \frac{\|\overline{u}\|\|\overline{G}\|}{1+\|\overline{u}\|^2} 
\|\mathcal{F}(u)\|_{\HS} \|\Gamma^{-1/2}\|_{\HS}
 \leq 
  \frac{\|\overline{G}\|^2}{1+\|\overline{u}\|^2} 
  + C\|\mathcal{F}(u)\|_{\HS}^2
\]
Now we need to use that $G$ is Lipschitz to obtain
\[\|\overline{G}\| 
\leq C(\frac1J\sum_j\|u^{(j)}\|+1)
\leq C(\frac1J\sum_j\|u^{(j)}-\overline{u}\|+ \|\overline{u}\| + 1)
\]
which implies (for constants depending on $J$)
\[
(\ref{e:t1}) 
\leq  C ( 1+\frac1J\sum_j\|u^{(j)}-\overline{u}\|^2 + \|\mathcal{F}(u)\|_{\HS}^2.
\]
Integrating from $0$ to $t\wedge \tau_{R,h}$ we  
finally obtain 
together with the bound from (\ref{e:mainbound})
for all $R>1$ and $h\in(0,1)$ that 
\[
\mathbb{E} \varphi(\|\overline{u}(t\wedge \tau_{R,h})\|^2) \leq C.
\]
But as $\varphi$ satisfies $\varphi(x+y)\leq \varphi(x) + y$ we obtain, again using  (\ref{e:mainbound})
\[
\mathbb{E} \varphi(\frac1J\sum_j\|u^{(j)}(t\wedge \tau_{R,h})\|^2) \leq C.
\]
which finishes the proof.
\end{proof}

\section{Proofs of Section~\ref{sec:linear_EKI}}\label{app:linear_EKI}

We are going to prove the following usefull auxilary result which we are going to apply at several points.
\begin{lemma}\label{lem:nonneg_innerprod}
Let $S$ be a symmetric and nonnegative $d\times d$-matrix, then for all choices of vectors $(z^{(k)})_{k=1,\dots,J}$ in $\R^d$ we have 
\begin{equation*}
\sum\limits_{k, l=1}^J\langle z^{(k)},z^{(l)}\rangle\langle z^{(k)},Sz^{(l)}\rangle\ge0.
\end{equation*}
\end{lemma}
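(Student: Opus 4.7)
The plan is to use the spectral decomposition of $S$ to reduce the double sum to a sum of manifestly nonnegative terms. Since $S$ is symmetric and nonnegative, we can write
\[
S = \sum_{i=1}^d \lambda_i v_i v_i^\top
\]
with eigenvalues $\lambda_i \ge 0$ and an orthonormal eigenbasis $\{v_i\}$. Substituting this into $\langle z^{(k)}, S z^{(l)}\rangle = \sum_i \lambda_i \langle z^{(k)}, v_i\rangle \langle z^{(l)}, v_i\rangle$ and swapping the order of summation gives
\[
\sum_{k,l=1}^J \langle z^{(k)}, z^{(l)}\rangle \langle z^{(k)}, S z^{(l)}\rangle
= \sum_{i=1}^d \lambda_i \sum_{k,l=1}^J \langle z^{(k)}, z^{(l)}\rangle \alpha_k^{(i)} \alpha_l^{(i)},
\]
where $\alpha_k^{(i)} := \langle z^{(k)}, v_i\rangle \in \R$ is just a scalar.

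For each fixed $i$, the inner double sum collapses via bilinearity of the inner product:
\[
\sum_{k,l=1}^J \alpha_k^{(i)} \alpha_l^{(i)} \langle z^{(k)}, z^{(l)}\rangle
= \Big\langle \sum_{k=1}^J \alpha_k^{(i)} z^{(k)},\ \sum_{l=1}^J \alpha_l^{(i)} z^{(l)} \Big\rangle
= \Big\| \sum_{k=1}^J \alpha_k^{(i)} z^{(k)} \Big\|^2 \ge 0.
\]
Combining this with $\lambda_i \geq 0$ in the outer sum yields the claim. There is no real obstacle here; the only subtlety is recognising that pulling out the rank-one component $\lambda_i v_i v_i^\top$ turns the second inner product into a product of scalars, which then acts as weights in an explicit Gram-type expression.

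As an alternative framing worth mentioning, the quantity in question equals $\mathrm{tr}(AB)$ where $A_{kl} = \langle z^{(k)}, z^{(l)}\rangle$ and $B_{kl} = \langle S^{1/2} z^{(k)}, S^{1/2} z^{(l)}\rangle$ are both Gram matrices, hence symmetric positive semidefinite; the nonnegativity of the trace of a product of two such matrices is standard. The spectral argument above is essentially this same fact unpacked by hand and is self-contained, so it is the route I would take in the appendix.
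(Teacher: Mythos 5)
Your proof is correct and follows essentially the same route as the paper: diagonalize $S$ in an orthonormal eigenbasis with nonnegative eigenvalues and rewrite the double sum as a nonnegative combination of squares. The only cosmetic difference is that the paper also expands $\langle z^{(k)},z^{(l)}\rangle$ in that eigenbasis, obtaining $\sum_{n,m}\lambda_m\bigl(\sum_k z_n^{(k)}z_m^{(k)}\bigr)^2$, which is exactly your $\sum_i\lambda_i\bigl\|\sum_k\alpha_k^{(i)}z^{(k)}\bigr\|^2$ written out coordinatewise.
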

\begin{proof}
Let $(v^{(m)})_{m=1,\dots,d}$ be an orthonormal basis of eigenvectors such that $Sv^{(m)}=\lambda_mv^{(m)}$ with $\lambda_m\ge0$. Then $z^{(l)}=\sum\limits_{m=1}^dz_m^{(l)}v^{(m)}$ and thus
\begin{align*}
\sum\limits_{k, l=1}^J\langle z^{(k)},z^{(l)}\rangle\langle z^{(k)},Sz^{(l)}\rangle&=\sum\limits_{k, l=1}^J\sum\limits_{m, n=1}^dz_n{(k)}z_n^{(l)}z_m^{(k)}z_m^{(l)}\lambda_m\\ &= \sum\limits_{n, m=1}^d\lambda_m(\sum\limits_{k=1}^Jz_n^{(k)}z_m^{(k)})^2\ge0.
\end{align*}
\end{proof}

\begin{proof}[Proof of Lemma~\ref{lem:inner_prod}]
First we define the operator
\begin{equation*}
M^\varepsilon(u_n) := (hB(C(u_n)+\varepsilon I_p)B^T+I_K)^{-1},
\end{equation*}
for which it holds true, that 
\[
\lim\limits_{\varepsilon\to0}\ M^\varepsilon(u_n) = M(u_n), \] 
since the mapping $\Sigma\mapsto\Sigma^{-1}$ is continuous over the set of invertible matrices. By
\begin{align*}
C(u_n) B^T M(u_n)\tilde y = \frac1J\sum_{k=1}^J \langle B(u_n^{(k)}-\bar u_n),M(u_n)\tilde y\rangle (u_n^{(k)}-\bar u_n),
\end{align*}
it is sufficient to prove 
\[
\langle B(u_n^{(k)}-\bar u_n),M(u_n)\tilde y\rangle =0.
\]
We introduce 
\[
C^\varepsilon(u_n) := C(u_n)+\varepsilon I_p
\] 
and apply the Woodbury-matrix identity
\begin{align*}
  &\langle B(u_n^{(k)}-\bar u_n),M^\varepsilon(u_n)\tilde y\rangle\\ &= \langle B(u_n^{(k)}-\bar u_n),\left[I_K^{-1}-h I_K^{-1}B((C^\varepsilon(u_n))^{-1}+h B^T I_K^{-1}B)^{-1}B^TI_K^{-1})\right]\tilde y\rangle\\ 
 				&= \langle B(u_n^{(k)}-\bar u_n),\tilde y\rangle - \langle B(u_n^{(k)}-\bar u_n),hB((C^\varepsilon(u_n))^{-1}+hB^TB)^{-1}B^T)\tilde y\rangle\\
 				& = 0 - \langle hB\left[((C^\varepsilon(u_n))^{-1}+hB^TB)^{-1}\right]^TB^T B(u_n^{(k)}-\bar u_n),\tilde y\rangle\\
 				& = 0,
\end{align*}
where we have used that $\tilde y\in\range(B)^\perp$. We conclude with
\begin{equation*}
\langle B(u_n^{(k)}-\bar u_n),M(u_n) \tilde y\rangle = \lim\limits_{\varepsilon\to0}\ \langle B(u_n^{(k)}-\bar u_n),M^\varepsilon(u_n) \tilde y\rangle = 0.
\end{equation*}

\end{proof}

\begin{proof}[Proof of Lemma~\ref{lem:deviationsdecrease}]
We can derive the evolution of the euclidean norm by
  \begin{align*}
  \|e_{n+1}^{(j)}\|^2  &= \|e_n^{(j)}\|^2 {- 2h\langle e_n^{(j)}, C(u_n) B^TM(u_n)B e_n^{(j)}\rangle} \\&\qquad + 2\sqrt h \langle e_n^{(j)}, C(u_n)B^TM(u_n)(W_{n+1}^{(j)} - \bar W_{n+1})\rangle \\
  &\qquad -2h^{3/2}\langle C(u_n) B^T M(u_n) Be_n^{(j)}, C(u_n)B^TM(u_n)(W_{n+1}^{(j)} - \bar W_{n+1})\rangle \\ 
  &\qquad + h^2 \|C(u_n) B^T M(u_n) Be_n^{(j)}\|^2 \\
  &\qquad + h \|C(u_n)B^TM(u_n)(W_{n+1}^{(j)} - \bar W_{n+1})\|^2.
  \end{align*}

  We first write, after plugging in the definition of $C(u_n)$ and inserting 
  \[
  M(u_n) (hBC(u_n)B^T + I)\ = I,
  \]
  also abbreviating $M = M(u_n)$ and $C = C(u_n)$
  \begin{align*}
-2h\langle e_n^{(j)}, C(u_n) B^TMB e_n^{(j)}\rangle &= -2h \cdot \frac1J\sum_j \langle e_n^{(j)}, C B^TMB e_n^{(j)}\rangle\\ 
&= -2h \frac1J\sum_j \langle  e_n^{(j)}, C B^T\cdot M[hBCB^T]\cdot MB e_n^{(j)}\rangle \\
  &\quad -2h \frac1J\sum_j \langle  e_n^{(j)}, C B^T\cdot M\cdot MB e_n^{(j)}\rangle\\
  &= -2h^2\frac1J\sum_j \langle B^TMBCe_n^{(j)}, CB^TMBe_n^{(j)}\rangle \\
  &{\quad -2h\frac1J\sum_j \langle MBCe_n^{(j)}, MBe_n^{(j)}\rangle}.
  \end{align*}
  
Defining $Z = B^TMB$ (this proof works for any self-adjoint matrix) it is easy to verify
  \begin{align*}\frac1J\sum_j \langle {{ZCe^{(j)}}}, {CZ e^{(j)}}\rangle &= \frac1J\sum_l \|CZe^{(l)}\|^2
  \end{align*}
  and we can continue to write 
  \begin{align*}
  -2h \cdot \frac1J\sum_j \langle e_n^{(j)}, C B^TMB e_n^{(j)}\rangle &= -2h^2 \frac1J\sum_j \|CB^TMB e_n^{(j)}\|^2  \\
  &\quad - 2h\frac1J\sum_{j, k}\langle e_n^{(k)}, e_n^{(j)}\rangle\langle MBe_n^{(k)}, MBe_n^{(j)}\rangle,
  \end{align*}
  where we have used the definition of $C(u_n)$ for the second term.
  
   We define $S := M B$ and use $\E \langle a, W_i\rangle \langle b, W_j\rangle = \delta_{i,j}\langle a, b\rangle$ in order to derive
  \begin{align*}
     &\E\left[ \left\|CB^TM(W_{n+1}^{(j)}-\bar W_{n+1})\right\|^2\mid \cF_n\right] \\
     &=\E\left[\frac1{J^2}\sum_{k,l}\left\langle  e^{(k)} \langle e^{(k)}, S^T (W_{n+1}^{(j)}-\bar W_{n+1})\rangle, e^{(l)} \langle e^{(l)}, S^T(W_{n+1}^{(j)}-\bar W_{n+1})\rangle\right\rangle\mid \cF_n\right]\\
     &= \frac1J\sum_{k,l}\langle e^{(k)}, e^{(l)}\rangle \left[ \frac{J-1}{J^2}\langle Se^{(k)}, Se^{(l)}\rangle + \frac{(J-1)^2}{J^2}\langle Se^{(k)}, Se^{(l)}\rangle\right]\\
     &=\frac1J\sum_{l, k} \langle e^{(l)}, e^{(k)}\rangle \langle MB e^{(l)}, MB e^{(k)}\rangle \cdot \frac{J-1}{J}. 
\end{align*}

  We take the expectation up to step $n$ above to obtain 
  \begin{align*}\label{eq:deviation_decrease}
  \E \left[\frac1J\sum_j \|e_{n+1}^{(j)}\|^2 - \|e_n^{(j)}\|^2\mid \cF_n\right] &= -2h^2 \frac1J\sum_j \|CB^TMB e_n^{(j)}\|^2\nonumber \\
  &\quad- 2h\frac1J\sum_{j,k}\langle e_n^{(k)}, e_n^{(j)}\rangle\langle MBe_n^{(k)}, MBe_n^{(j)}\rangle\nonumber \\
  &\quad+ 0 + 0 + h^2\frac1J\sum_j \|CB^TMB e_n^{(j)}\|^2\nonumber \\
  &\quad+ \frac{J-1}{J^2}\cdot h\sum_{j,k}\langle e_n^{(k)}, e_n^{(j)}\rangle\langle  MBe_n^{(k)},  MBe_n^{(j)}\rangle\nonumber\\
  &= -h^2 \frac1J\sum_j \|CB^TMB e_n^{(j)}\|^2\nonumber \\
  &\quad- \frac{J+1}{J}h\|CB^\top M\|_{\HS}^2\nonumber\\
  &\leq 0
  \end{align*}
  where positivity of the last sum follows from lemma \ref{lem:nonneg_innerprod} by setting $S = B^TM^2B$. In particular, the process $(\frac1J\sum_{j=1}^J\|e_n^{(j)}\|)_{n\in\N}$ is a supermartingale, and the assertion follows.
\end{proof}

\begin{proof}[Proof of Proposition~\ref{prop:bound_imagespace}]
The update of the mapped residuals is given by
\begin{equation*}
B r_{n+1}^{(j)} = B r_n^{(j)} - hB C(u_n) B^TM(u_n)B r_n^{(j)} + \sqrt{h}B C(u_n) B^TM(u_n)W_{n+1}^{(j)}\,.
\end{equation*}
Using $M(u_n)(hBC(u_n)B^\top+I)=I$ and abbreviating again $M=M(u_n)$ and  $C=C(u_n)$, we obtain
\begin{align*}
\E[\|Br_{n+1}^{(j)}\|^2 &- \|Br_n^{(j)}\|^2\mid \cF_n]\\ &=-2h\langle Br_n^{(j)},BCB^TMBr_n^{(j)}\rangle  + 0 + 0 + h^2	\|BCB^TMBr_n^{(j)}\|^2 \\
&\quad + h\|BCB^TMW_{n+1}^{(j)}\|^2\\
&= -2h\langle Br_n^{(j)},M(hBCB^\top+I)BCB^TMBr_n^{(j)}\rangle \\
&\quad + h^2	\|BCB^TMBr_n^{(j)}\|^2 + h\E\left[\|BCB^TMW_{n+1}^{(j)}\|^2\mid\cF_n\right]\\
&= -2h^2\langle Br_n^{(j)},MBCB^\top BCB^TMBr_n^{(j)}\rangle -2h\langle Br_n^{(j)},MBCB^TMBr_n^{(j)}\rangle \\
&\quad + h^2	\|BCB^TMBr_n^{(j)}\|^2 + h\E\left[\|BCB^TMW_{n+1}^{(j)}\|^2\mid\cF_n\right]\\
&= -2h^2\langle BCB^\top MBr_n^{(j)}, BCB^TMBr_n^{(j)}\rangle\\ &\quad -2h\langle C^{1/2}B^\top MBr_n^{(j)},C^{1/2}B^TMBr_n^{(j)}\rangle \\
&\quad + h^2	\|BCB^TMBr_n^{(j)}\|^2 + h\E\left[\|BCB^TMW_{n+1}^{(j)}\|^2\mid\cF_n\right]\\
&= -2h^2\|BCB^\top MBr_n^{(j)}\|^2  -2h\|C^{1/2}B^\top MBr_n^{(j)}\|^2\\ &\quad + h^2	\|BCB^TMBr_n^{(j)}\|^2 + h\E\left[\|BCB^TMW_{n+1}^{(j)}\|^2\mid\cF_n\right]\,.
\end{align*}
We note that 
\[
\E\left[\|BCB^TMW_{n+1}^{(j)}\|^2\mid\cF_n\right] =  h\frac{1}{J^2}\sum_{l=1}^J\|C^{1/2}B^\top MBe_n^{(l)}\|^2.
\]
Similarly as in the proof of Lemma~\ref{lem:deviationsdecrease} we obtain
 \begin{align*}
\frac1J\sum\limits_{j=1}^J\E[\|Be_{n+1}^{(j)}\|^2 - \|Be_n^{(j)}\|^2\mid \cF_n]  &= -h^2 \frac1J\sum_{j=1}^J \|CB^TMB e_n^{(j)}\|^2 \\
  &\quad- h\frac{J+1}{J^2}\sum_{j=1}^J\|C^{1/2}B^\top MBe_n^{(j)}\|^2.
\end{align*}

We conclude with 
\begin{align}\label{eq:res_decrease}
 \E[\frac1J\sum_{j=1}^J(\|Br_{n+1}^{(j)}\|^2+\|Be_{n+1}^{(j)}\|^2) &- \frac1J\sum_{j=1}^J(\|Br_n^{(j)}\|^2+\|Be_{n}^{(j)}\|^2)\mid \cF_n]\nonumber\\&=-h^2\frac1J\sum_{j=1}^J\|BCB^\top MBr_n^{(j)}\|^2\nonumber\\ &\quad-2h\frac1J\sum_{j=1}^J\|C^{1/2}B^\top MBr_n^{(j)}\|^2\nonumber \\
 &\quad-h^2 \frac1J\sum_{j=1}^J \|CB^TMB e_n^{(j)}\|^2\nonumber\\
 &\quad-h\frac{1}{J^2}\sum_{j=1}^J\|C^{1/2}B^\top MBe_n^{(j)}\|^2\nonumber\\&\le 0.
 \end{align}

\end{proof}

\begin{proof}[Proof of Corollary~\ref{cor:sumbound}]
From the proof of Lemma~\ref{lem:deviationsdecrease} we know that
\begin{align*}
  0\le \frac1J\sum_{j=1}^J\E \left[\|e_n^{(j)}\|^2\right] = \E\left[\|e_0^{(j)}\|^2\right] &-\sum_{k=0}^{n-1} h^2 \frac1J\sum_{j=1}^J \E[\|CB^TMB e_k^{(j)}\|^2]\nonumber \\
  &- \sum_{k=0}^{n-1} \frac{J+1}{J}h\E[\|CB^\top M\|_{\HS}^2]\nonumber\\
  \end{align*}
  and it implies that for all $n\in\N$ we have that
  \begin{align*}
   \sum_{k=0}^{n-1} \frac{J+1}{J}h\|CB^\top M\|_{\HS}^2 \le \E\left[\|e_0^{(j)}\|^2\right].
  \end{align*}
  The other bound follow similarly by using the update formula
  \begin{align*}
 0\le \frac1J\sum_{j=1}^J\E[\|Br_{n}^{(j)}\|^2+\|Be_{n}^{(j)}\|^2\mid \cF_n] &=\frac1J\sum_{j=1}^J\E[\|Br_{0}^{(j)}\|^2+\|Be_{0}^{(j)}\|^2]\\ &\quad - \sum_{k=0}^{n-1} h^2\frac1J\sum_{j=1}^J\|BCB^\top MBr_k^{(j)}\|^2\\ &\quad-2\sum_{k=0}^{n-1}h\frac1J\sum_{j=1}^J\E[\|C^{1/2}B^\top MBr_k^{(j)}\|^2]\nonumber \\
 &\quad-\sum_{k=0}^{n-1}h^2 \frac1J\sum_{j=1}^J \E[\|CB^TMB e_k^{(j)}\|^2]\nonumber\\
 &\quad-\sum_{k=0}^{n-1}h\frac{1}{J^2}\sum_{j=1}^J\E[\|C^{1/2}B^\top MBe_k^{(j)}\|^2]\nonumber.
 \end{align*}
\end{proof}

\begin{proof}[Proof of Lemma~\ref{lem:secondmoments_imagespace}]

We first note that 
\[\E[\|r_n^{(j)}\|^2] = \E[\|Pr_n^{(j)}\|^2] + \E[\|(I-P)r_n^{(j)}\|^2]
\]
and we consider both terms separately. 

\textbf{Step 1 - Bounding $\E[\|Pr_n^{(j)}\|^2]$:}

We observe that 
\[\|Pr_n^{(j)}\|^2 = \|B^\top(BB^\top)^-Br_n^{(j)}\|^2 \le \|B^\top(BB^\top)^-\|_{\HS}^2 \|Br_n^{(j)}\|^2.
\]
Application of Proposition~\ref{prop:bound_imagespace} gives the uniform bound in $n$ and $h$, i.e.
\[
\|Pr_n^{(j)}\|^2\le c_1
\]
for some $c_1>0$ independent of $n$ and $h$.

\textbf{Step 2 - Bounding $\E[\|(I-P)r_n^{(j)}\|^2]$:}

For the update of $(I-P)r_n^{(j)}$ we have that 
\begin{align*}
 (I-P)r_{n+1}^{(j)} &= (I-P)r_n^{(j)} - h(I-P)C(u_n)B^\top M(u_n) Br_n^{(j)}\\ &\quad + \sqrt{h} (I-P)C(u_n)B^\top M(u_n)W_{n+1}^{(j)}\\
							&= (I-P)r_n^{(j)}\\ &\quad+\frac1J\sum_{k=1}^J \langle -hM(u_n)Br_n^{(j)}+\sqrt{h}M(u_n)W_{n+1}^{(j)},Be_n^{(k)}\rangle (I-P)e_n^{(k)}.
\end{align*}
Similarly, we have that
\begin{align*}
 (I-P)e_{n+1}^{(j)} &= (I-P)e_n^{(j)} \\ &\quad +\frac1J\sum_{k=1}^J \langle -hM(u_n)Be_n^{(j)}+\sqrt{h}M(u_n)W_{n+1}^{(j)},Be_n^{(k)}\rangle (I-P)e_n^{(k)}\\
							&= (I-P)e_{0}^{(j)}.
\end{align*}

Hence, we imply that $(I-P)e_n^{(k)} = 0$ for all $k$, i.e.~$e_n^{(k)}$ is in the range of $P$, and it follows that
\[ 
(I-P)r_{n+1}^{(j)} = (I-P)r_n^{(j)} = (I-P)r_0^{(j)}
\]
Finally, we conclude with
\[
\|(I-P)r_{n+1}^{(j)}\|^2 = \|(I-P)r_0^{(j)}\|^2\le c_2.
\]
\end{proof}

\begin{proof}[Proof of Lemma~\ref{lem:1_moments}]
Let $p=1$ and write
\begin{align*}
 \|r_{n+1}\|_{L_p}&:= \E[\| r_{n+1}^{(j)}\|^{p}]^{1/p}\\ &= \|r_n^{(j)} - h C(u_n)B^\top M(u_n)Br_n^{(j)} + \sqrt{h}C(u_n)B^{\top} M(u_n)W_{n+1}^{(j)}\|_{L_p}\\
&\le \|r_n^{(j)}\|_{L_1} + \|C(u_n)^{1/2}\|_{L_2} \|h C(u_n)^{1/2}B^\top M(u_n)Br_n^{(j)}\|_{L_2}\\ &\quad+ \|\sqrt{h}C(u_n)B^{\top} M(u_n)W_{n+1}^{(j)}\|_{L_1}.
\end{align*}

First, note that we can write
\[
(C(u_n))^{1/2} = (1/J\cdot (e_n^{(1)}, e_n^{(2)},\dots,e_n^{(J)})(e_n^{(1)}, e_n^{(2)},\dots,e_n^{(J)})^\top)^{1/2}
\]
and hence, it holds true that
\[ 
\|C(u_n)^{1/2}\|_{L_2} \le \left(\frac{1}{J} \sum_{j=1}^J \E\|e_n^{(j)}\|^2\right)^{1/2}\le \left(\frac{1}{J} \sum_{j=1}^J \E\|e_0^{(j)}\|^2\right)^{1/2}=: C_1. 
\]
Furthermore, we can bound
\begin{align*}
 \|r_{n}^{(j)}\|_{L_1} \le \|r_0^{(j)}\|_{L_1} &+ C_1 \sum_{k=0}^n \|h C(u_k)^{1/2}B^\top M(u_k)Br_k^{(j)}\|_{L_2}\\ & + \sum_{k=0}^n \|\sqrt{h}C(u_n)B^{\top} M(u_n)W_{n+1}^{(j)}\|_{L_1}.
 \end{align*}

From Corollaryn~\ref{cor:sumbound} we have that for all $n\ge1$
\[
2\sum_{k=0}^n  h\E[\| C(u_k)^{1/2}B^\top M(u_k)Br_k^{(j)}\|^2] \le \E[\frac1J\sum_{j=1}^J\|Br_0^{(j)}\|^2 + \|Be_0^{(j)}\|^2]
\]
and it follows by Jensen's inequality that
\begin{align*}
 \sum_{k=0}^n \|h C(u_k)^{1/2}B^\top M(u_k)Br_k^{(j)}\|_{L_2} &\le \sum_{k=0}^{N\cdot T} \|h C(u_k)^{1/2}B^\top M(u_k)Br_k^{(j)}\|_{L_2} \\
&\le T\cdot \left(\sum_{k=0}^{N\cdot T} \frac1T h\E[\| C(u_k)^{1/2}B^\top M(u_k)Br_k^{(j)}\|^2]\right)^{1/2}\\
&\le \sqrt{T}\cdot \E\left[1/2\frac1J\sum_{j=1}^J\|Br_0^{(j)}\|^2 + \|Be_0^{(j)}\|^2\right]^{1/2}
\end{align*}
providing a uniform bound in $h$ for all $n$.  Similarly, we obtain from Corollary~\ref{cor:sumbound} that
\begin{align*}
\frac{J+1}{J}\sum_{k=0}^n \E[\|\sqrt{h}C(u_k)B^{\top} M(u_k)W_{k+1}^{(j)}\|^2]&=\frac{J+1}{J}\sum_{k=0}^n \E[h\|C(u_k)B^{\top} M(u_k)\|_{\HS}^2]\\  &\le \E[\frac1J\sum_{j=1}^J \|e_0^{(j)}\|^2]
\end{align*}
and applying again Jensen's inequality gives
\begin{align*}
 \sum_{k=0}^n \|\sqrt{h}C(u_n)B^{\top} M(u_n)W_{n+1}^{(j)}\|_{L_1} &\le \sum_{k=0}^{N\cdot T} \E[\|\sqrt{h}C(u_n)B^{\top} M(u_n)W_{n+1}^{(j)}\|^2]^{1/2} \\
&\le T\cdot \left(\sum_{k=0}^{N\cdot T} \frac1T h\E[\| C(u_k)B^\top M(u_k)\|_{\HS}^2]\right)^{1/2}\\
&\le \sqrt{T}\cdot \left(\frac{J}{J+1}\frac1J\sum_{j=1}^J\E[\|e_0^{(j)}\|^2]\right)^{1/2}.
\end{align*}
We conclude the proof by
\begin{align*}
 \|r_n^{(j)}\|_{L_1} &\le \|r_0^{(j)}\|_{L_1}\\ &\quad + \sqrt{T}\cdot  \left(\frac{1}{J} \sum_{j=1}^J \E\|e_0^{(j)}\|^2\right)^{1/2}\cdot \left(1/2\frac1J\sum_{j=1}^J\E[\|Br_0^{(j)}\|^2 + \|Be_0^{(j)}\|^2]\right)^{1/2}\\
 &\quad+ \sqrt{T}\cdot \left(\frac{J}{J+1}\frac1J\sum_{j=1}^J\E[\|e_0^{(j)}\|^2]\right)^{1/2}.
\end{align*}
\end{proof}

\begin{proof}[Proof of Lemma~\ref{lem:moments_TEKI}]
We again decompose $\tilde y = \hat y+y^\prime$, where $\hat y\in \range( \tilde B)$ and $y^\prime\in \range(\tilde B)^\perp$. By Lemma~\ref{lem:inner_prod} there exists $\hat u$ (not necessarily unique),  such that we can write the update for $r_n^{(j)} = u_n^{(j)} - \hat u$ by
\[
r_{n+1}^{(j)} = r_n^{(j)} - hC(u_n)\tilde B^TM(u_n)\tilde Br_n^{(j)} + \sqrt{h}C(u_n)\tilde B^TM(u_n)W_{n+1}^{(j)}.
\]
By Proposition~\ref{prop:bound_imagespace} it follows that
\begin{align*}
  \sup_{n\in\{1,\dots,N\}}&\frac1J\sum_{j=1}^J\E[\|\tilde Br_{n+1}^{(j)}\|_{\R^K\times\cX}^2+\|\tilde Be_{n+1}^{(j)}\|_{\R^K\times\cX}^2]\\ &\le \E[\frac1J\sum_{j=1}^J(\|\tilde Br_n^{(j)}\|_{\R^K\times\cX}^2+\|\tilde Be_n^{(j)}\|_{\R^K\times\cX}^2)].
\end{align*}
The definition of $\tilde B$ implies that
\[
\|\tilde Br_{n+1}^{(j)}\|_{\R^K\times\cX}^2 = \|B(u_{n+1}^{(j)}-\hat u)\|_{\R^K}^2 + \|(u_{n+1}^{(j)}-\hat u)\|_{\cX}^2
\]
and hence, we conclude with
\[
\sup_{n\in\{1,\dots,N\}} \E[\|u_n^{(j)}\|^2]\le C
\]
for all $j\in\{1,\dots,J\}$ where $C>0$ is independent from $h$.
\end{proof}

\end{document}